\newtheorem{theorem}{Theorem}[section]
\newtheorem{remark}{Remark}[section]
\newtheorem{lemma}[theorem]{Lemma}
\newtheorem{pro}[theorem]{Proposition}
\renewcommand{\div}{{\rm div }}
\newcommand{\bt}{\begin{theorem}}
\newcommand{\bl}{\begin{lemma}}
\newcommand{\el}{\end{lemma}}
\newcommand{\et}{\end{theorem}}
\newcommand{\curl}{{\rm curl} }
\newcommand{\la}{\label}
\newcommand{\bn}{\begin{eqnarray}}
\newcommand{\en}{\end{eqnarray}}
\newcommand{\bnn}{\begin{eqnarray*}}
\newcommand{\enn}{\end{eqnarray*}}
\newcommand{\ba}{\begin{aligned}}
\newcommand{\ea}{\end{aligned}}
\newcommand{\be}{\begin{equation}}
\newcommand{\ee}{\end{equation}}
\renewcommand{\la}{\label}
\newcommand{\Bv}{{\boldsymbol{v}}}
\newcommand{\Bw}{{\boldsymbol{w}}}
\newcommand{\Bn}{{\boldsymbol{n}}}
\newcommand{\Bt}{{\boldsymbol{\tau}}}
\newcommand{\Bu}{{\boldsymbol{u}}}
\newcommand{\Be}{{\boldsymbol{e}}}
\newcommand{\BF}{{\boldsymbol{F}}}
\newcommand{\bBU}{\bar{{\boldsymbol{U}}}}
\newcommand{\Bo}{{\boldsymbol{\omega}}}
\newcommand{\mcE}{\mathcal{E}}
\newcommand{\what}{\widehat}
\newcommand{\veps}{\varepsilon}
\def\XXint#1#2#3{{\setbox0=\hbox{$#1{#2#3}{\int}$ }
\vcenter{\hbox{$#2#3$ }}\kern-.6\wd0}}
\begin{document}

\title[Uniqueness and Stability of Poiseuille Flows]
{Uniqueness and uniform structural stability of Poiseuille flows in an infinitely long pipe with Navier boundary conditions}

\author{Yun Wang}
\address{School of Mathematical Sciences, Center for dynamical systems and differential equations, Soochow University, Suzhou, China}
\email{ywang3@suda.edu.cn}

\author{Chunjing Xie}
\address{School of mathematical Sciences, Institute of Natural Sciences,
Ministry of Education Key Laboratory of Scientific and Engineering Computing,
IMA-Shanghai, Shanghai Jiao Tong University, 800 Dongchuan Road, Shanghai, China}
\email{cjxie@sjtu.edu.cn}

\begin{abstract}
{In this paper, uniqueness and uniform structural stability of Poiseuille flows in an infinitely long pipe with Navier boundary conditions are established for axisymmetric solutions of steady Navier-Stokes system. The crucial point is that the estimate is uniform with respect both the flux of flows and slip coefficient which appeared in Navier boundary conditions. With the aid of special structure of Navier-Stokes system and the refined estimate for some quantities such as radial velocity, the uniqueness and existence of steady solutions of Navier-Stokes system can  be obtained even when the external forces are large as long as the fluxes of flows are large. The delicate decomposition in the two dimensional plane for  slip coefficient and frequency corresponding to Fourier variable in the axial direction plays a key role to achieve   these estimates.}
\end{abstract}

\keywords{Poiseuille flows, steady Navier-Stokes equations, pipe, uniform structural stability.}
\subjclass[2010]{
35G61, 35J66, 35L72, 35M32, 76N10, 76J20}

\date{}

\maketitle

\section{Introduction and Main Results}
An interesting problem in fluid mechanics is to study the flows in nozzles. Given an infinitely long nozzle $\Omega$, consider the following steady incompressible Navier-Stokes system
\begin{equation}\label{NS}
\left\{
\begin{aligned}
& \Bu\cdot \nabla \Bu - \Delta \Bu +\nabla p= \BF\ \ \ \ \mbox{in}\ \Omega,\\
&\div~\Bu=0\ \ \ \ \ \mbox{in}\ \Omega,
\end{aligned}
\right.
\end{equation}
where $\Bu=(u^x, u^y, u^z)$ and $\BF=(F^x, F^y, F^z)$ are the velocity field and external force, respectively. Here we have formally put the viscosity coefficient to be the unity.
When the nozzle is a straight cylinder, the system \eqref{NS} supplemented no slip boundary condition admits the shear flow solutions which are of the form  $\Bu=(u^x, u^y, u^z)=(0,0, u^z(x,y))$ and are called Poiseuille flows. When the infinitely long nozzle $\Omega$ tends to straight cylinders at far fields,
it was proposed by Leray in \cite{Leray} to study the well-posedness of the system \eqref{NS} supplemented with no slip boundary condition so that the solutions tend to the Poiseuille flows at far fields. This problem is called Leray problem nowadays. The first significant contribution to the solvability of Leray problem is due to Amick (\hspace{1sp}\cite{Amick1, Amick2}). He reduced the proof of existence to the resolution of a well-known variational problem related to the stability of Poiseuille flow in a flat cylinder. Amick left out the investigation of uniqueness and  existence of solutions with large flux. A rich and detailed analysis of the flow with large flux is due to Ladyzhenskaya and Solonnikov (\hspace{1sp}\cite{LS}) where existence of solutions to the steady Navier-Stokes system in nozzles were proved. However, the uniqueness and asymptotic far field behavior of the solutions obtained in \cite{LS} are not very clear.  There are lots of  further studies on the well-posedness for Leray problem and far field behavior for the associated solutions, one may refer to \cite{AmickF,MF, Rabier1, Rabier2, HW, LS, AP,Pileckas}, etc.
For more references on steady solutions of the Navier-Stokes equation in nozzles or other type of domains, please refer to the book by Galdi \cite{Galdi}. A significant open problem posed in \cite[p. 19]{Galdi} is the global well-posedness for Leray problem in a general nozzle when the flux $\Phi$ is large.

With the aid of the compactness of solutions obtained in \cite{LS} and the blowup method,
in order to get the global well-posedness for  Leray problem in a general nozzle tending to a pipe,  a key step is to  prove global uniqueness of Poiseuille flow in a pipe with the no slip boundary conditions. As a first step to study global uniqueness of Poiseuille flows, the local uniqueness was addressed in \cite{WX1,WX2}.  In fact,  the uniform structural stability of Poiseuille flows was established in \cite{WX1}, and it was even proved in \cite{WX2} that the solution is unique in a suitably large neighborhood of Hagen-Poiseuille flow when the flux is large. Furthermore, the solutions tend to the Hagen-Poiseuille flows exponentially fast as long as the external force tends to zero exponentially fast (\hspace{1sp}\cite{WX2}). For the Poiseuille flows in two dimensional infinitely long strip, the uniqueness of the solutions in the class of symmetric flows  was obtained in \cite{Rabier1}, while the uniqueness of the solutions in the class of general two dimensional flows was obtained only for the case with small flux (\hspace{1sp}\cite{Rabier2}).  The uniqueness and  uniform structural stability of two dimensional Poiseuille flows with any flux in a periodic strip was achieved in \cite{SWX} when the period is not very big.

On the other hand, the general boundary conditions for the system \eqref{NS} are the Navier boundary conditions
\be \label{NavierBC}
\Bu \cdot \Bn = 0, \ \ \ \ \ \ \ 2\Bn \cdot D(\Bu) \cdot \Bt +  \alpha \Bu \cdot \Bt = 0 \ \ \ \ \mbox{on}\ \partial \Omega,
\ee
where $\alpha \geq  0$ is called the slip coefficient which measures the tendency of a fluid to slip over the boundary and $D(\Bu) = \frac{\nabla \Bu + (\nabla \Bu)^t }{2}$ is the strain tensor. The boundary conditions \eqref{NavierBC} were first proposed by Navier (\hspace{1sp}\cite{Navier}) and were used  as an effective boundary condition for flows over rough boundaries via asymptotic and rigorous analysis in \cite{Achdou,JM}. Formally, as $\alpha \rightarrow \infty$, the Navier boundary conditions become the Dirichlet boundary conditions. In order to fix the solution of the problem \eqref{NS}-\eqref{NavierBC} in a nozzle, we require the following flux constraint
\be \label{flux}
\int_{\Sigma} \Bu \cdot \Bn \, dS = \Phi,
\ee
where $\Phi \in \mathbb{R}$ is called the flux of the flow. Without loss of generality, we always assume that $\Phi$ is nonnegative.

A typical straight cylinder in fluid mechanics and engineering is the circular pipe $\Omega = B_1(0) \times \mathbb{R}$. When $\BF=0$, the Navier-Stokes system \eqref{NS}-\eqref{flux} in $\Omega$ has an explicit solution $\Bu = (0, 0, u^z)$ with
\be \label{Poiseuille}
u^z = \bar{U}(r) = \frac{4+  2 \alpha}{4 +  \alpha } \left( 1 -  \frac{2\alpha}{4 +  2\alpha}r^2  \right) \frac{\Phi}{\pi}, \ \ \ \ \mbox{with}\ \ r= \sqrt{x^2 + y^2}.
\ee
Later on, we also call the solution $\bBU = \bar{U}(r) \Be_z $  Poiseuille flow.

It is also interesting to study the general Leray problem, i.e., to prove the existence of solutions in a general nozzle with Navier boundary conditions \eqref{NavierBC}, which converge to the Poiseuille flows at far fields where the nozzle tends to be straight.

However, the studies on steady Navier-Stokes system \eqref{NS} with Navier boundary conditions are not as many as that for the problem with no-slip boundary conditions. This is also the situation even for the problem in bounded domains.
The existence and regularity  of steady solutions of Navier-Stokes system \eqref{NS} supplemented with { homogeneous boundary conditions \eqref{NavierBC} in simply connected bounded domains} were obtained in \cite{AACG}.  { Moreover, uniform estimates with respect to $\alpha$ were also obtained in \cite{AACG}, however, the uniqueness is still unclear.} The  two-dimensional and three-dimensional axisymmetric solutions for  Navier-Stokes system with Navier boundary conditions in an infinitely long nozzle was investigated in \cite{Mucha1, Mucha2, Mucha3}  when the geometry of nozzles or the slip coefficient satisfies certain constraints. It was proved in \cite{SWX2} that there exists a solution for the Navier-Stokes system in a general two-dimensional nozzle. In order to solve the general Leray problem, one needs only to prove a Liouville type theorem for the general Poiseuille flows in a straight cylinder   with Navier boundary conditions. It was proved in \cite{WX-Navier} that under general Navier boundary conditions,  the Poiseuille flow is uniformly structural stable with respect to both the slip coefficient and the flux in a pipe periodic in the axial direction. This, in particular, implies the local uniqueness of Poiseuille flows in the pipe periodic in the axial direction. The aim of this paper is to study the uniqueness and uniform structural stability of Poiseuille flow in an infinitely long pipe.

In this paper, we study the problem \eqref{NS}-\eqref{NavierBC} in an infinitely long pipe, i.e., $\Omega= B_1(0) \times \mathbb{R}$. The aim is to prove the existence and uniqueness of solutions to \eqref{NS}-\eqref{flux} in a neighborhood of Poiseuille flow for every $\alpha$ and $\Phi$.  We  start with the wellposedness for the following linearized perturbation system
\be \label{linearizedNS}
\left\{
\ba & \bBU \cdot \nabla \Bv + \Bv \cdot \nabla \bBU - \Delta \Bv + \nabla P = \BF \ \ \ \ \mbox{in}\ \Omega, \\
& {\rm div}~\Bv = 0\ \ \ \ \mbox{in}\ \Omega,
\ea  \right.
\ee
supplemented
with the boundary conditions and the flux constraint,
\be \label{fluxBC}
\Bv \cdot \Bn =0,\ \ \ \ 2\Bn \cdot D(\Bv) \cdot \Bt + \alpha \Bv \cdot \Bt = 0\ \  \mbox{on} \ \partial \Omega, \ \ \ \ \ \ \int_{\Sigma} \Bv
\cdot \Bn \, dS = 0.
\ee

Our first main result is the following uniform estimates for the solutions of \eqref{linearizedNS}-\eqref{fluxBC}.
 \bt \label{thm1}
Assume that $\BF= \BF(r, z) \in L^2(\Omega)$  is axisymmetric. 

(a)\, If $F^\theta  = 0$, then the linear problem \eqref{linearizedNS}-\eqref{fluxBC} has a unique axisymmetric solution $\Bv \in H^2(\Omega)$, which satisfies $v^\theta = 0$ and
\be \label{estuniformlinear}
\|\Bv\|_{H^{1}(\Omega)} \leq C \|\BF\|_{L^2 (\Omega)},
\ee
\be \label{estimatelinear}
\|\Bv\|_{ H^2 (\Omega)} \leq C (1 + \Phi^{\frac14} ) \|\BF\|_{L^2 (\Omega)}.
\ee
Moreover, there exists a large constant $\Phi_0$ such that when $\Phi \geq \Phi_0$, it holds that
\be \label{estimatelinear-Philarge}
\|v^r \|_{L^2(\Omega)} \leq C \Phi^{-\frac45} \|\BF\|_{L^2(\Omega)}\ \ \ \ \text{and}\ \ \  \|\partial_z v^z \|_{L^2(\Omega)} \leq C \Phi^{-\frac37} \|\BF\|_{L^2(\Omega)},
\ee
where $C$ is a uniform constant independent of $\BF$, $\Phi$, and $\alpha$.

(b)\, If $F^\theta \neq 0$, $\alpha \geq \alpha_0 > 0$,  then the linear problem \eqref{linearizedNS}-\eqref{fluxBC} has a unique axisymmetric solution $\Bv \in H^2(\Omega)$  satisfying \eqref{estuniformlinear}-\eqref{estimatelinear}, and
\be \label{estimatelinear-swirl}
\|\Bv^\theta\|_{H^2(\Omega)} \leq C \left( 1+  \frac{1}{\alpha_0} \right) \|F^\theta \|_{L^2(\Omega)},  \ \ \ \ \|\partial_z v^\theta\|_{L^2(\Omega)} \leq C\left( 1+ \frac{1}{\alpha_0 } \right)^{\frac12} \Phi^{-\frac12} \|F^\theta \|_{L^2(\Omega)},
\ee
where $C $ is a uniform constant independent of $\BF$,  $\Phi$, and $\alpha$.
\et

\begin{remark}
It is noted that the estimates for $v^r$, $\partial_z v^z$, and $\partial_z v^\theta$ are even better when $\Phi$ is large. This is the key fact that helps to get the existence and uniquenss of solutions for nonlinear problem when  $\BF$ is the large in the case with large flux $\Phi$, see Part (b) of Theorem \ref{mainthm}.
\end{remark}

{
	\begin{remark}
	The condition $\alpha>0$ is needed because  the compatibility conditions are needed to guarantee the existence of solutions for the problem with $\alpha=0$. In fact, the associated homogeneous system has infinitely many nonzero solutions in the case $\alpha =0$. The condition for $\alpha>0$ is also needed to study steady Navier-Stokes system  with Navier boundary conditions in bounded domains (cf. \cite{AACG}).
\end{remark}
}

With the aid of the uniform estimates for the linear system, we have the following results on the existence and local uniqueness of solutions to the problem \eqref{NS}-\eqref{flux}.
\bt \label{mainthm}
Assume that $\BF= \BF(r, z)\in L^2 (\Omega)$ is axisymmetric and $F^\theta = 0$.

(a)\, There exists a  constant $\veps_0$, independent of $\BF$, $\Phi$, and $\alpha$, such that if
\be \label{thmuniformnonlinear1}
\|\BF\|_{L^2  (\Omega)} \leq \veps_0,
\ee
then the steady Navier-Stokes system \eqref{NS} supplemented with the boundary conditions \eqref{NavierBC} and the flux constraint \eqref{flux} has a unique axisymmetric solution
$\Bu$ without swirl (i.e., $u^\theta = 0$), which satisfies  the estimates
\be \label{thmuniformnonlinear2}
\|\Bu - \bBU \|_{H^{1} (\Omega)} \leq C \|\BF\|_{L^2(\Omega)}
\ee
and
\be \label{thmuniformnonlinear3}
\|\Bu - \bBU \|_{H^2(\Omega)} \leq C (1 + \Phi^{\frac14}) \|\BF\|_{L^2(\Omega)}.
\ee

(b) There exists a constant $\Phi_1(>1)$ such that for every $\Phi \geq \Phi_1$, if
\be \nonumber
\|\BF\|_{L^2(\Omega)} \leq \Phi^{\frac{1}{40}},
\ee
the problem \eqref{NS}-\eqref{flux} has a unique axisymmetric solution satisfying $u^\theta = 0$ and the estimates \eqref{thmuniformnonlinear2}-\eqref{thmuniformnonlinear3},
\be \label{thmuniformnonlinear4}
\|\Bu^r \|_{H^{\frac54}(\Omega)} + \|\partial_z \Bu^z \|_{H^{\frac14} (\Omega) } \leq \Phi^{-\frac{1}{10} } \|\BF\|_{L^2(\Omega)},
\ \ \ \ \|\Bu^z \|_{H^{\frac54} (\Omega) } \leq 2C_1 \Phi^{\frac{1}{16}} \|\BF\|_{L^2(\Omega)}.
\ee
Here $C_1$ is a uniform constant independent of $\BF$,  $\Phi$, and $\alpha$.
\et

Furthermore, if $F^\theta \neq 0$, we have the  the following results.

\bt \label{mainthm2}
Assume that $\BF = \BF(r, z) \in L^2(\Omega)$ is axisymmetric and $\alpha \geq \alpha_0 > 0$ where $\alpha_0$ is any fixed positive constant.

(a)\ There exists a constant $\epsilon_0$, independent of $\BF$, $\Phi$, and $\alpha$, such that if
\be \nonumber
\|\BF\|_{L^2(\Omega)} \leq \epsilon_0,
\ee
the problem \eqref{NS}-\eqref{flux} has a unique axisymmetric solution $\Bu$ satisfying
\eqref{thmuniformnonlinear2}-\eqref{thmuniformnonlinear3}.

(b)\ There exists a  constant $\Phi_2(>1)$ depending only on $\alpha_0$,  such that for every $\Phi \geq \Phi_2$, if
\be \nonumber
\|\BF\|_{L^2(\Omega)} \leq \Phi^{\frac{1}{40}},
\ee
the problem \eqref{NS}-\eqref{flux} has a unique axisymmetric solution satisfying the estimates
\eqref{thmuniformnonlinear2}-\eqref{thmuniformnonlinear4}, and
\be  \label{thmuniformnonlinear5}
\|\Bu^\theta\|_{H^{\frac54}(\Omega)} \leq 2C_2\left( 1 + \frac{1}{\alpha_0} \right) \|\BF\|_{L^2(\Omega)} , \ \ \ \ \ \|\partial_z  \Bu^\theta \|_{H^{\frac14}(\Omega)}
\leq \Phi^{-\frac14}  \|\BF\|_{L^2(\Omega)},
\ee
where $C_2 $ is a  uniform constant independent of $\BF$, $\Phi$, and  $\alpha_0$.
\et

\begin{remark}
Following almost the same proof as in \cite{WX2}, one can prove the exponential convergence of the solution to the Poiseuille flow as $|z|\to \infty$, when the external force decays exponentially at far fields.
\end{remark}


{ There are many studies on unsteady flows with Navier boundary conditions in recent years. Here we just mention a few of them which relate to the problem studied in this paper. The stability of zero solution in a strip with Navier boundary conditions  and enhanced dissipation of Poiseuille flows in a two dimensional strip under total slip conditions are investigated in \cite{Ding1} and \cite{Ding2}, respectively. The stability of Couette flows of compressible Navier-Stokes system with Navier boundary conditions was studied in \cite{Li}. Furthermore, the vanishing viscosity limit of unsteady flows with different slip coefficients was analyzed in \cite{WWX}.}

{The essential ideas in this paper are similar to that in \cite{WX-Navier}, but the details are much more involved. Here we give some of these key ideas. First, the stream function for the axisymmetric flows satisfies a fourth order equation, which can be transformed to a fourth order ODE with a frequency parameter after taking Fourier transform in the axial direction. With the aid of energy estimate, one can get the estimate for the stream function when the flux is small or large but with low or high frequency. With the aid of the construction of boundary layers, the case with large flux, intermediate frequency, and small (large) slip coefficient can also be handled where uniform $H^2$ estimate can be obtained. For the problem with large flux, intermediate frequency, and intermediate slip coefficient, we  get weaker estimate, i.e., uniform $H^1$ estimate. However, these estimates are enough to study nonlinear problem together with some better estimate for the quantities like $v^r$, $\partial_z v^z$, etc.}

The organization of the rest of the  paper is as follows. In Section \ref{Linear},  the stream function formulation for both nonlinear problem  \eqref{NS}-\eqref{flux} and  linearized problem \eqref{linearizedNS}-\eqref{fluxBC} in the axisymmetric case is introduced. Some uniform a priori estimates for the stream function of the linearized problem are given in Section \ref{sec-res}.  The uniform a priori estimate of the swirl velocity is established in Section \ref{sec-swirl}. With the aid of the analysis on the associated linearized problem and iteration method, the uniform nonlinear structural stability of Poiseuille flows in axisymmetric case is proved in Section \ref{secnonlinear}. The appendix collects  some important lemmas which are used here and there in the paper.


\section{Stream function formulation }\label{Linear}
This section devotes to the stream formulation  of the nonlinear and   linear systems of Navier-Stokes equations.  An important observation is that the equation for swirl velocity decouples from the equations for  radial and axial velocity. After introducing the stream function,  the equations for axial and radial velocity can be reduced into a single fourth order equation.

\subsection{Stream function formulation for nonlinear perturbation system}
Suppose that $\Bu$ is an axisymmetric solution of \eqref{NS}-\eqref{flux}, $\bBU= \bar{U} \Be_z$ is the Poiseuille flow with $\bar{U}$ defined in \eqref{Poiseuille}. Let
\be \nonumber
\Bv= \Bu - \bBU = v^r (r, z) \Be_r + v^\theta (r, z) \Be_\theta + v^z(r, z)\Be_z.
\ee

In terms of cylindrical coordinates, the system  for perturbation  $\Bv$ can be written as the follows,
\be \label{nonlinearperturb}
\left\{ \ba
& \bar{U}(r) \frac{\partial v^r}{\partial z} + \frac{\partial P}{\partial r} - \left[ \frac{1}{r} \frac{\partial }{\partial r} \left( r \frac{\partial v^r}{\partial r} \right) + \frac{\partial^2 v^r}{\partial z^2} - \frac{v^r}{r^2}  \right] = F^r - (v^r \partial_r v^r + v^z \partial_z v^r ) + \frac{(v^\theta)^2 }{r}\ \ \mbox{in}\ D, \\
& \bar{U}(r) \frac{\partial v^z }{\partial z } +v^r \frac{\partial \bar{U}}{\partial r} + \frac{\partial P }{\partial z } - \left[ \frac{1}{r} \frac{\partial }{\partial r} \left( r \frac{\partial v^z}{\partial r} \right) + \frac{\partial^2 v^z }{\partial z^2}  \right] = F^z - (v^r \partial_r v^z + v^z \partial_z v^z)\ \ \ \mbox{in} \ D,\\
& \bar{U}(r) \frac{\partial v^\theta}{\partial z} - \left[ \frac1r \frac{\partial}{\partial r} \left( r \frac{\partial v^\theta}{\partial r } \right) + \frac{\partial^2 v^\theta}{\partial z^2} - \frac{v^\theta}{r^2} \right] = F^\theta - (v^r \partial_r v^\theta + v^z \partial_z v^\theta) - \frac{v^r v^\theta}{r}\ \ \ \ \mbox{in}\ D,   \\
& \partial_r v^r + \partial_z v^z + \frac{v^r}{r} = 0\ \ \ \mbox{in} \ D.
\ea  \right.
\ee
Here $F^r$, $F^z$, and $F^\theta$ are the radial, axial, and azimuthal component of $\BF$, respectively, and $D=\{(r, z): r\in (0, 1), z\in \mathbb{R}\}$.
The  boundary conditions and the flux constraint  can be written as
\be \label{BC-1}
v^r(1, z)  = 0,\quad  \partial_z v^r -\partial_r v^z = \alpha v^z,\quad  \int_0^1 r v^z(r, z)\, dr = 0,
\ee
and
\begin{equation}\label{BC-swirl}
\frac{ \partial v^\theta}{\partial r} (1, z) = (1 - \alpha) v^\theta(1, z).
\end{equation}

It follows from  the fourth equation in \eqref{nonlinearperturb} that there exists a stream function $\psi(r, z)$ satisfying
\be \label{2-0-4}
v^r =  \partial_z \psi \ \ \text{and} \ \ v^z = - \frac{\partial_r (r \psi) }{r}.
\ee
Then the azimuthal vorticities of $\Bv$ and $\BF$ are written as
\be \nonumber
\omega^\theta= \partial_z v^r - \partial_r v^z= \frac{\partial }{\partial r}  \left(  \frac1r \frac{\partial }{\partial r} (r \psi) \right) + \partial_z^2 \psi \ \ \ \ \mbox{and}\ \ \ \ f= \partial_z F^r - \partial_r F^z,
\ee
respectively.
Taking the first two equations in \eqref{nonlinearperturb} yields that
\be \label{2-0-2}
\bar U(r)  \partial_z \omega^\theta - \left(\partial_r^2 + \partial_z^2 + \frac{1}{r}  \partial_r \right)\omega^\theta
+ \frac{\omega^\theta}{r^2}  = \partial_z F^r - \partial_r F^z - \partial_r (v^r \omega^\theta) - \partial_z (v^z \omega^\theta) +
\partial_z \left[ \frac{(v^\theta)^2}{r} \right].
\ee
Therefore, the stream function $\psi$ satisfies the following fourth order equation,
\be \label{2-0-4-1}
\bar U(r)  \partial_z( \mathcal{L} + \partial_z^2) \psi -
( \mathcal{L} + \partial_z^2)^2 \psi =\partial_z F^r - \partial_r F^z - \partial_r (v^r \omega^\theta) - \partial_z (v^z \omega^\theta) +
\partial_z \left[ \frac{(v^\theta)^2}{r} \right],
\ee
where
\be \nonumber
\mathcal{L} = \frac{\partial}{\partial r} \left(  \frac1r \frac{\partial}{\partial r}(r \cdot)      \right) = \frac{\partial^2}{\partial r^2} + \frac1r \frac{\partial}{\partial r} - \frac{1}{r^2}.
\ee
 Next, we derive the boundary conditions for $\psi$. As discussed in \cite{Liu-Wang}, in order to get classical solutions, some compatibility conditions at the axis should be imposed. Assume that the velocity $\Bv$ and the vorticity $\Bo$ are continuous, then $v^r(0, z)$ and $\omega^\theta(0, z)$ should vanish. This implies that
\be \nonumber
\partial_z \psi(0, z) = (\mathcal{L} + \partial_z^2)\psi (0, z) = 0.
\ee
Hence, without loss of generality, one can  assume that $\psi(0, z) = 0$, and  the following compatibility condition holds at the axis,
\be \label{2-0-4-2}
\psi(0, z) = \mathcal{L} \psi(0, z) = 0.
\ee
On the other hand, it follows from \eqref{BC-1} that
\be \nonumber
\int_0^1 \partial_r (r \psi ) (r, z)\, dr =-  \int_0^1 r v^z \, dr = 0.
\ee
This, together with \eqref{2-0-4-2}, gives
\be \label{2-0-4-3}
\psi(1, z) = \lim_{r\rightarrow 0+ } ( r \psi)  (r, z)  = 0.
\ee
Moreover, according to the Navier boundary condition \eqref{fluxBC} for $\Bv$, one has
\be \nonumber
\omega^\theta =   \alpha v^z \quad \text{at}\,\, r=1.
\ee
This implies that
\be \nonumber
( \mathcal{L} + \partial_z^2 ) \psi = -\alpha \frac{\partial_r (r \psi )}{r } \ \ \ \ \mbox{at}\ \ r=1 .
\ee
Note that $\psi(1, z) = 0$, hence
\be \label{2-0-4-5}
\mathcal{L} \psi(1, z) = -\alpha \frac{\partial }{\partial r} \psi (1, z) .
\ee

\subsection{Stream function formulation for linear perturbation problem}
In terms of cylindrical coordinates, the linearized perturbation equations \eqref{linearizedNS} for the axisymmetric solutions become
\be \label{2-0-1-1}
\left\{
\ba
& \bar U(r)  \frac{\partial v^r}{\partial z} + \frac{\partial P}{\partial r} -\left[ \frac{1}{r} \frac{\partial}{\partial r}\left(
r \frac{\partial v^r}{\partial r} \right) + \frac{\partial^2 v^r}{\partial z^2} - \frac{v^r}{r^2} \right] = F^r  \ \ \ \mbox{in}\ D ,\\
& v^r \frac{\partial \bar U }{\partial r} + \bar U(r)  \frac{\partial v^z}{\partial z} + \frac{\partial P}{\partial z}
- \left[ \frac{1}{r} \frac{\partial }{\partial r} \left( r \frac{\partial v^z}{\partial r}\right) + \frac{\partial^2 v^z}{\partial z^2} \right] = F^z \ \ \mbox{in}\ D ,                     \\
& \partial_r v^r + \partial_z v^z + \frac{v^r}{r} =0\ \ \ \mbox{in}\ D
\ea \right.
\ee
and
\be \label{vswirl}
\bar U(r)  \partial_z  v^\theta - \left[ \frac{1}{r} \frac{\partial }{\partial r} \left( r \frac{\partial v^\theta}{\partial r}\right) + \frac{\partial^2 v^\theta}{\partial z^2} - \frac{v^\theta}{r^2} \right] =  F^\theta \ \ \ \mbox{in}\ \ D.
\ee

Similarly, one can introduce the stream function $\psi$ for the solution $(v^r, v^z)$ of \eqref{2-0-1-1}. Then $\psi$ satisfies the following fourth order equation,
\be \label{stream-formulation}
\bar{U}(r) \partial_z (\mathcal{L} + \partial_z^2) \psi - (\mathcal{L} + \partial_z^2)^2 \psi = \partial_z F^r - \partial_r F^z = f.
\ee
Furthermore, the boundary conditions for $\psi$ are of the form
\be \label{stream-BC}
\mathcal{L} \psi(0, z) = \psi(0, z) = \psi(1, z) =0, \ \ \ \ \mathcal{L}\psi(1, z) = - \alpha \frac{\partial \psi}{\partial r }(1, z) .
\ee

At last, let us introduce some notations. For a given function $g(r, z)$, define its Fourier transform with respect to $z$ variable by
\be  \nonumber
\hat{g}(r, \xi) = \int_{\mathbb{R}} g(r, z) e^{-i \xi z} dz.
\ee
Let $\Re g$ and $\Im g$ denote the real and imaginary part of a function or a number $g$, respectively.

\section{Uniform estimate independent for stream function }\label{sec-res}
The main goal of this section is  to prove part (a) of Theorem \ref{thm1}, i.e., the uniform estimates for solutions to the linear problem \eqref{stream-formulation}-\eqref{stream-BC} and the corresponding velocities. The main idea is in the same spirit as that in \cite{WX-Navier}, where we deal with the solutions periodic in the axial direction.

Taking the Fourier transform with respect to $z$ for the equation \eqref{2-0-4-1}, for each fixed $\xi$, $\hat{\psi}$ satisfies
\be \label{2-0-8}
i \xi \bar{U}(r) ( \mathcal{L} -\xi^2) \hat{\psi} - ( \mathcal{L} -\xi^2)^2 \hat{\psi} = \hat{f}.
\ee
The boundary conditions \eqref{2-0-4-2}-\eqref{2-0-4-5} can be written as
\be \label{FBC}
 \hat{\psi}(0) = \hat{\psi} (1) = \mathcal{L} \hat{\psi} (0 ) = 0\ \ \ \mbox{and}\ \ \mathcal{L}\hat{\psi}(1) =-  \alpha \hat{\psi}^{\prime}(1).
\ee
{In what follows, first we give the uniform estimate for $\psi$ with respect to $\alpha$, when $\Phi$ is not large. Then we deal with the case with large flux in terms of three subcases. More precisely,  choosing some small constant $\epsilon_1 \in (0, 1)$, the proof is divided into three subcases: (1)\ $ |\xi| \leq \frac{1}{\epsilon_1 \Phi}$ \ (2)\ $|\xi | \geq \epsilon_1 \sqrt{\Phi} $ \ (3) \ $\frac{1}{\epsilon_1 \Phi} <  |\xi| < \epsilon_1 \sqrt{\Phi} $. In the first two cases, the equation  \eqref{2-0-8} is essentially a biharmonic equation so that the standard energy estimates are enough. While the elaborate boundary layer analysis is used to deal with the last case when slip coefficients are big or small comparing with $(\Phi|\xi|)^{1/3}$. In these two cases, the analysis is quite similar to the problem with $\xi$ as integers, i.e., the solutions are periodic in the axial direction. However, when the slip coefficients are comparable with $(\Phi|\xi|)^{1/3}$, then the uniform estimate for $\|(v^r, v^z)\|_{H^1}$and some other better estimate such as $\|v^r\|_{L^2}$ are obtained via delicate analysis on the problem.
}

\subsection{Uniform A priori estimate for stream function when $\Phi$ is not large}
In this subsection,  some basic estimates for the solutions of the problem \eqref{2-0-8}--\eqref{FBC} are established. These estimates are uniform with respect to $\Phi$ and $\alpha$, when $\Phi$ is not large.
\begin{pro}\label{smallflux}
Let $\hat{\psi}$ be a smooth solution to \eqref{2-0-8}-\eqref{FBC}. It holds that
\be \la{stream-smallflux}
\ba
  & \int_0^1\left( |\mathcal{L} \hat{\psi}|^2 r \, + \xi^2  \left| \frac{d}{dr}(r \hat{\psi} )  \right|^2 \frac1r \, + \xi^4  |\hat{\psi}|^2 r \right)\, dr + \alpha \left|  \frac{d}{dr} (r \hat{\psi}) (1)  \right|^2 \\
\leq  & C (1+ \Phi^2) \int_0^1  (|\widehat{F^r} |^2 + |\widehat{F^z}|^2 ) r \, dr .
\ea
\ee
\end{pro}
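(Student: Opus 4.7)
The plan is to derive the estimate \eqref{stream-smallflux} from a single energy identity, obtained by multiplying the ODE \eqref{2-0-8} by $r\,\overline{\hat\psi}$, integrating over $r\in(0,1)$, and taking real parts. Two rounds of integration by parts applied to $\int_0^1 (\mathcal{L}-\xi^2)^2\hat\psi\cdot\overline{\hat\psi}\,r\,dr$, using the four conditions in \eqref{FBC}, will produce exactly the four nonnegative pieces on the left of \eqref{stream-smallflux}: the slip contribution $\alpha\bigl|\tfrac{d}{dr}(r\hat\psi)(1)\bigr|^2$ emerges from the Robin condition $\mathcal{L}\hat\psi(1) = -\alpha\hat\psi'(1)$ combined with the identity $\tfrac{d}{dr}(r\hat\psi)(1) = \hat\psi'(1)$ (since $\hat\psi(1)=0$), and all other boundary contributions cancel.

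For the transport term, the $\xi^2$ piece of $\Delta_\xi\hat\psi$ paired with $\overline{\hat\psi}\,r$ gives a real integrand which, multiplied by $i\xi$, drops out of the real part. What remains is $-\xi\,\mathrm{Im}\int_0^1 \bar U(r)\,\mathcal L\hat\psi\,\overline{\hat\psi}\,r\,dr$. I would then use the identity $r\,\mathrm{Im}(\mathcal L\hat\psi\,\overline{\hat\psi}) = \partial_r\bigl[r\,\mathrm{Im}(\hat\psi'\overline{\hat\psi})\bigr]$ (itself a direct consequence of $\mathrm{Im}[\hat\psi''\overline{\hat\psi}] = \partial_r\mathrm{Im}[\hat\psi'\overline{\hat\psi}]$) and integrate by parts---boundary terms vanish because $\hat\psi(0)=\hat\psi(1)=0$---to rewrite it as $\xi\int_0^1 \bar U'(r)\,r\,\mathrm{Im}(\hat\psi'\overline{\hat\psi})\,dr$. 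From the explicit Poiseuille profile \eqref{Poiseuille}, $|\bar U'(r)|\leq C\Phi\,r$ uniformly in $\alpha$, yielding the bound $C\Phi|\xi|\bigl(\int_0^1 |\hat\psi'|^2 r\,dr\bigr)^{1/2}\bigl(\int_0^1 |\hat\psi|^2 r\,dr\bigr)^{1/2}$. The forcing $\hat f = i\xi\hat{F^r} - \partial_r\hat{F^z}$, after integrating the $\hat{F^z}$ piece by parts against $r\,\overline{\hat\psi}$ (boundary vanishing by $\hat\psi(1)=0$), is controlled by $|\xi|\bigl(\int|\hat{F^r}|^2 r\bigr)^{1/2}\bigl(\int|\hat\psi|^2 r\bigr)^{1/2} + \bigl(\int|\hat{F^z}|^2 r\bigr)^{1/2}\bigl(\int\tfrac{|\frac{d}{dr}(r\hat\psi)|^2}{r}\bigr)^{1/2}$.

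The closing step relies on the weighted Hardy--Poincar\'e chain $\int_0^1|\hat\psi|^2 r\,dr \leq \int_0^1\tfrac{|\frac{d}{dr}(r\hat\psi)|^2}{r}\,dr \leq \int_0^1 |\mathcal L\hat\psi|^2 r\,dr$, together with $\int_0^1|\hat\psi'|^2 r\,dr \leq \int_0^1\tfrac{|\frac{d}{dr}(r\hat\psi)|^2}{r}\,dr$, all of which follow from the real-valued identity $\int_0^1 \mathcal L\hat\psi\,\overline{\hat\psi}\,r\,dr = -\int_0^1\tfrac{|\frac{d}{dr}(r\hat\psi)|^2}{r}\,dr$ (obtained by one integration by parts using $\hat\psi(0)=\hat\psi(1)=0$) combined with Cauchy--Schwarz. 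Young's inequality will absorb small fractions of each coercive quantity on the left from both the transport and forcing contributions, leaving a remainder of the form $C\int(|\hat{F^r}|^2+|\hat{F^z}|^2)r\,dr + C\Phi^2\xi^2\int|\hat\psi|^2 r\,dr$; a final application of the Hardy--Poincar\'e chain to bound $\int|\hat\psi|^2 r\,dr$ by the coercive left-hand side, followed by iteration, produces the $(1+\Phi^2)$ prefactor in \eqref{stream-smallflux}. The main obstacle lies precisely in this closing step: the transport term's natural scaling $C\Phi|\xi|\|\mathcal L\hat\psi\|\|\hat\psi\|$ cannot be absorbed uniformly in $(\xi,\Phi,\alpha)$ simultaneously, so the $(1+\Phi^2)$ factor is essentially sharp at the level of this direct energy estimate---which is why the subsequent subsections refine the argument by splitting on the size of $|\xi|$ and invoking boundary-layer analysis in the regime of large flux.
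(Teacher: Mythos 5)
You correctly set up the energy identity, the biharmonic/boundary-term computation (including the way the Robin condition produces $\alpha\lvert\frac{d}{dr}(r\hat\psi)(1)\rvert^2$), and the real part of the transport term. The identity $r\,\mathrm{Im}(\mathcal L\hat\psi\,\overline{\hat\psi})=\partial_r[r\,\mathrm{Im}(\hat\psi'\overline{\hat\psi})]$ and the resulting bound $|\text{transport}|\le C\Phi|\xi|\,\|\sqrt r\,\hat\psi'\|_{L^2}\|\sqrt r\,\hat\psi\|_{L^2}$ are both fine. The Hardy--Poincar\'e chain you invoke is correct (it is Lemma~\ref{lemma1} in the paper).

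The gap is in the closing step, and it is not repairable by iteration. After Young's inequality your remainder is $C\Phi^2\xi^2\int_0^1|\hat\psi|^2 r\,dr$. The coercive left-hand side only gives you $\xi^4\int|\hat\psi|^2 r\,dr$ (absorbs when $|\xi|\gtrsim\Phi$) or, via Lemma~\ref{lemma1}, $\int|\mathcal L\hat\psi|^2 r\,dr\ge\int|\hat\psi|^2 r\,dr$ (absorbs when $\Phi|\xi|\lesssim 1$). For intermediate $\xi$, say $|\xi|\sim 1$ and $\Phi$ large, you arrive at an inequality of the schematic form $\|\mathcal L\hat\psi\|^2\le C\|\BF\|^2+C\Phi^2\|\mathcal L\hat\psi\|^2$, whose ``iteration'' is not contractive and yields nothing. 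The real part alone therefore does not produce the claimed $(1+\Phi^2)$ prefactor; it simply fails for $\Phi$ large in the intermediate frequency range.

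What is missing is the imaginary-part identity \eqref{3-7},
\[
\xi\int_0^1 \frac{\bar U(r)}{r}\Bigl|\frac{d}{dr}(r\hat\psi)\Bigr|^2 dr+\xi^3\int_0^1\bar U(r)|\hat\psi|^2 r\,dr=-\Im\int_0^1\hat f\,\overline{\hat\psi}\,r\,dr,
\]
which is obtained after noting that $\Re\int_0^1\frac{d}{dr}(r\hat\psi)\,r\overline{\hat\psi}\,dr=0$. Both terms on the left are nonnegative, and the crucial input is the lower bound $\bar U(r)\ge\frac{\Phi}{\pi}(1-r^2)$ (which follows from \eqref{3-7-1} and holds uniformly in $\alpha\ge 0$). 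Combined with the weighted Hardy--Littlewood--P\'olya inequality (Lemma~\ref{lemmaHLP}), this yields \eqref{3-8}:
\[
\Phi|\xi|\int_0^1|\hat\psi|^2 r\,dr\le C\Bigl(\int_0^1|\widehat{F^r}||\xi\hat\psi|r\,dr+\int_0^1|\widehat{F^z}|\Bigl|\frac{d}{dr}(r\hat\psi)\Bigr|dr\Bigr).
\]
This gives a gain of a full power of $\Phi$ on $\|\sqrt r\,\hat\psi\|_{L^2}^2$ that is not available from the real part. It is precisely this gain that the paper uses in \eqref{3-11} to absorb the transport term and produce the factor $\Phi$ hitting the forcing (hence $\Phi^2$ after Cauchy--Schwarz), which is where $(1+\Phi^2)$ actually comes from. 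Your rewrite of the transport term via $\bar U'$ is an acceptable alternative to the paper's integration by parts, but the bound $|\bar U'(r)|\le C\Phi r$ only controls the size of the transport term; it does not exploit the positivity of $\bar U$. You must take the imaginary part of the energy identity as a separate equation and use the sign structure there.
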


\begin{proof}
Multiplying \eqref{2-0-8} by $r \overline{\hat{\psi}}$ and integrating the resulting equation over $[0, 1]$ yield
\be \label{3-1}
\int_0^1 \left[ i \xi \bar{U} (r) (\mathcal{L} - \xi^2) \hat{\psi} - (\mathcal{L} - \xi^2)^2 \hat{\psi} \right] \overline{\hat{\psi}} r \, dr
= \int_0^1 \hat{f} \overline{\hat{\psi} } r\, dr .
\ee
For the first term on the left hand of \eqref{3-1}, it follows from integration by parts and the homogeneous boundary conditions for $\hat{\psi}$ that
\be \label{3-2}
 \ba
& \int_0^1 i \xi  \bar U(r)  ( \mathcal{L}  - \xi^2) \hat{\psi}    \overline{\hat{\psi} }  r \, dr \\
= \,\,& i \xi \int_0^1 \bar{U}(r)  \frac{d}{dr} \left(  \frac1r \frac{d}{dr} (r \hat{\psi} )     \right)  r \overline{ \hat{\psi} } \, dr
-  i \xi^3 \int_0^1 \bar U(r) | \hat{\psi} |^2  r \, dr \\
= \,\,&   i \xi \frac{4\Phi}{\pi} \frac{\alpha}{4+ \alpha }  \int_0^1 \frac{d}{dr} (r \hat{\psi} )  r \overline{ \hat{\psi} }\, dr
- i \xi \int_0^1 \frac{\bar{U}(r)}{r} \left| \frac{d}{dr} (r \hat{\psi} )   \right|^2 \, dr - i \xi^3 \int_0^1 \bar{U}(r) |\hat{\psi} |^2  r \, dr.
\ea \ee
 While for the second term on the left hand  of \eqref{3-1}, one has
\be \la{3-3} \ba
& \int_0^1 ( \mathcal{L}  - \xi^2)^2 \hat{\psi}  \overline{\hat{\psi} }  r \, dr \\
 =\,\, & \int_0^1 \frac{d}{dr} \left(  \frac1r \frac{d}{dr} ( r \mathcal{L} \hat{\psi} )      \right)  \overline{\hat{\psi} }  r \, dr
- 2 \xi^2 \int_0^1 \frac{d}{dr} \left( \frac1r \frac{d}{dr} (r \hat{\psi}  )    \right)  \overline{ \hat{\psi} }  r \, dr
+ \xi^4 \int_0^1 |\hat{\psi}|^2  r\, dr \\
 =\,\,& \int_0^1 | \mathcal{L} \hat{\psi}  |^2  r \, dr - \mathcal{L} \hat{\psi} (1) \frac{d}{dr} ( r \overline{\hat{\psi} } )(1)  + 2 \xi^2 \int_0^1 \left|  \frac{d}{dr} (r \hat{\psi} )  \right|^2  \frac1r \, dr
+ \xi^4 \int_0^1 | \hat{\psi} |^2  r \, dr\\
= \,\, &  \int_0^1 | \mathcal{L} \hat{\psi} |^2  r \, dr + \alpha \left| \frac{d}{dr}(r \hat{\psi} )(1) \right|^2   + 2 \xi^2 \int_0^1 \left|  \frac{d}{dr} (r \hat{\psi} )  \right|^2  \frac1r \, dr
+ \xi^4 \int_0^1 |\hat{\psi} |^2  r \, dr.
\ea \ee

It follows from  \eqref{3-1}-\eqref{3-3} that
\be \la{3-5} \ba
& \int_0^1 | \mathcal{L} \hat{\psi} |^2  r \, dr
+ 2 \xi^2 \int_0^1 \left|  \frac{d}{dr} (r \hat{ \psi}  )   \right|^2  \frac1r \, dr
+ \xi^4 \int_0^1 |\hat{\psi}  |^2  r \, dr +  \alpha \left| \frac{d}{dr}(r \hat{\psi}  )(1) \right|^2  \\
 =\,\, &- \Re \int_0^1 \hat{f} \overline{ \hat{\psi} }  r \, dr -  \frac{4 \Phi}{\pi} \frac{\alpha}{4 + \alpha } \xi  \Im \int_0^1  \left[ \frac{d}{dr} (r \hat{\psi} )  r \overline{\hat{\psi} }      \right] \, dr
\ea \ee
and
\be \la{3-6} \ba
&  \xi  \int_0^1 \frac{\bar{U}(r) }{ r } \left| \frac{d}{dr} ( r \hat{\psi} )  \right|^2 \, dr
 + \xi^3 \int_0^1  \bar{U}(r) | \hat{\psi} |^2  r\, dr
 -  \frac{4 \Phi}{\pi} \frac{\alpha}{ 4+ \alpha } \xi \Re \int_0^1   \left[ \frac{d}{dr} ( r \hat{\psi}  )  r \overline{ \hat{\psi}  } \right] \, dr\\
 & = - \Im \int_0^1 \hat{f}  \overline{ \hat{\psi} }  r \, dr.
\ea \ee
The homogeneous boundary conditions for $\hat{\psi} $ imply
\be \nonumber
\Re \int_0^1 \frac{d}{dr} (r \hat{\psi}  ) r \overline{ \hat{\psi} } \, dr  = 0.
\ee
Hence the expression  \eqref{3-6} can be rewritten as
\be \la{3-7}
 \xi \int_0^1 \frac{\bar{U}(r) }{ r } \left| \frac{d}{dr} ( r \hat{\psi} )   \right|^2 \, dr
+ \xi^3 \int_0^1  \bar{U}(r) | \hat{\psi}  |^2  r\, dr
 = - \Im \int_0^1 \hat{f } \overline{\hat{\psi}  }  r \, dr .
\ee

Since $\alpha \geq  0$, for every $0 \leq r < 1$,
\be \label{3-7-1}
\frac{\frac{2\Phi}{\pi} (1 - r^2) }{\bar{U} (r) }
= \frac{2(1-r^2)}{\frac{4 + 2\alpha}{4 + \alpha} \left( 1 - \frac{2\alpha}{4 + 2\alpha } r^2 \right)   }
\leq \frac{2(1-r^2)}{\frac{4 + 2\alpha}{4 + \alpha} \left( 1 - r^2 \right)   } \leq 2.
\ee
It follows from \eqref{3-7}-\eqref{3-7-1} and Lemma \ref{lemmaHLP}  that
\be \label{3-8} \ba
\Phi |\xi| \int_0^1 |\hat{\psi}|^2 r \, dr
& \leq C \Phi |\xi| \int_0^1 \frac{1 - r^2 }{r} \left| \frac{d}{dr}(r \hat{\psi} ) \right|^2 \, dr  \\
& \leq C \left( \int_0^1 | \widehat{F^r} | | \xi \hat{\psi}| r \, dr + \int_0^1 |\widehat{F^z} | \left| \frac{d}{dr} ( r \hat{\psi} ) \right| \, dr \right).
\ea
\ee
Similarly, the estimate \eqref{3-5} gives
\be \la{3-9} \ba
&  \int_0^1 |\mathcal{L} \hat{\psi}|^2 r \, dr + 2\xi^2 \int_0^1 \left| \frac{d}{dr}( r \hat{\psi})    \right|^2 \frac1r \, dr + \xi^4 \int_0^1 |\hat{\psi} |^2 r \, dr +  \alpha \left| \frac{d }{dr} (r \hat{\psi} ) (1) \right|^2  \\
\leq & \int_0^1 | \widehat{F^r} | | \xi \hat{\psi}| r \, dr + \int_0^1 |\widehat{F^z} | \left| \frac{d}{dr} ( r \hat{\psi} ) \right| \, dr +
\frac{4\Phi}{\pi} |\xi| \int_0^1 \left| \frac{d}{dr} (r \hat{\psi})  \right| |r \hat{\psi}| \, dr .
\ea
\ee
According to \eqref{3-8}, one has
\be \la{3-11}
\ba
 & \frac{4\Phi}{\pi} |\xi | \int_0^1 \left| \frac{d}{dr} (r \hat{\psi})  \right| |r \hat{\psi}| \, dr \\
\leq & C \Phi^{\frac12} \left[ |\xi|  \int_0^1 \left| \frac{d}{dr}(r \hat{\psi} )  \right|^2 \frac1r \, dr   \right]^{\frac12} \left( \Phi |\xi| \int_0^1 |\hat{\psi}|^2 r \, dr  \right)^{\frac12} \\
\leq & \frac14 ( 1  + \xi^2) \int_0^1 \left| \frac{d}{dr}(r \hat{\psi} )  \right|^2 \frac1r \, dr + C \Phi \left( \int_0^1 | \widehat{F^r} | | \xi \hat{\psi}| r \, dr + \int_0^1 |\widehat{F^z} | \left| \frac{d}{dr} ( r \hat{\psi} ) \right| \, dr \right).
\ea
\ee
Substituting \eqref{3-11} into \eqref{3-9} and applying Lemma \ref{lemma1} and Cauchy-Schwarz inequality give
\be \la{3-12} \ba
  & \int_0^1\left( |\mathcal{L} \hat{\psi}|^2 r \, + \xi^2  \left| \frac{d}{dr}(r \hat{\psi} )  \right|^2 \frac1r \, + \xi^4  |\hat{\psi}|^2 r \right)\, dr + \alpha \left|  \frac{d}{dr} (r \hat{\psi}) (1)  \right|^2 \\
\leq  & C (1+ \Phi^2) \int_0^1  (|\widehat{F^r} |^2 + |\widehat{F^z}|^2 ) r \, dr,
\ea
\ee
which is exactly \eqref{stream-smallflux}. Hence the proof of Proposition \ref{smallflux} is completed.
\end{proof}

With the aid of this a priori estimate, for each fixed $\xi$, the existence of strong solutions to the problem  \eqref{2-0-8}--\eqref{FBC} has been established in \cite{WX-Navier}.

\begin{pro}
The corresponding velocity field $\Bv^*= v^r\Be_r + v^z \Be_z$ satisfies
\be
\|\Bv^*\|_{H^2(\Omega)} \leq C (1 + \Phi^2) \|\BF^*\|_{L^2(\Omega)}.
\ee
\end{pro}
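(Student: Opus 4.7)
The plan is to promote the stream-function estimate of Proposition \ref{smallflux} first to an $H^1$ bound on the velocity, and then to the required $H^2$ bound via a Stokes regularity bootstrap in which the advection terms are treated as forcing.

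The first step will be to identify the left-hand side of \eqref{stream-smallflux} with a vorticity norm. Since $\widehat{v^r}=i\xi\hat\psi$ and $\widehat{v^z}=-\frac{1}{r}\partial_r(r\hat\psi)$, integration by parts using $\hat\psi(0)=\hat\psi(1)=0$ shows
\begin{equation*}
\int_0^1 |\mathcal{L}\hat\psi|^2 r\,dr + 2\xi^2 \int_0^1 \frac{|\partial_r(r\hat\psi)|^2}{r}\,dr + \xi^4 \int_0^1 |\hat\psi|^2 r\,dr = \int_0^1 |(\mathcal{L}-\xi^2)\hat\psi|^2 r\,dr = \|\widehat{\omega^\theta}\|_{L^2(r\,dr)}^2,
\end{equation*}
so Parseval in $z$ converts \eqref{stream-smallflux} into $\|\omega^\theta\|_{L^2(\Omega)}\leq C(1+\Phi)\|\BF^*\|_{L^2(\Omega)}$. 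Since $\Bv^*$ is a swirl-free, divergence-free axisymmetric field with $\Bv^*\cdot\Bn=0$ on $\partial\Omega$, reconstructing it from $\omega^\theta$ via the Dirichlet problem $(\mathcal{L}+\partial_z^2)\psi=\omega^\theta$ and applying a Hardy inequality for $v^r/r$ together with the continuity equation for $\partial_r v^r$ then yields
\begin{equation*}
\|\Bv^*\|_{H^1(\Omega)} \leq C(1+\Phi)\|\BF^*\|_{L^2(\Omega)}.
\end{equation*}

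Next I would recast \eqref{2-0-1-1} as a Stokes system
\begin{equation*}
-\Delta\Bv^* + \nabla P = \BG := \BF^* - \bar U(r)\,\partial_z\Bv^* - v^r \bar U'(r)\Be_z, \qquad \div\Bv^* = 0,
\end{equation*}
supplemented by the Navier boundary condition in \eqref{fluxBC}. Because $\|\bar U\|_\infty + \|\bar U'\|_\infty \leq C\Phi$, one has $\|\BG\|_{L^2(\Omega)} \leq \|\BF^*\|_{L^2(\Omega)} + C\Phi\|\Bv^*\|_{H^1(\Omega)}$. The $H^2$-regularity for the Stokes problem with Navier boundary conditions on the infinite cylinder, obtained by Fourier transforming in $z$ and invoking the bounded-domain theory of \cite{AACG} on the cross section $B_1(0)$, then delivers
\begin{equation*}
\|\Bv^*\|_{H^2(\Omega)} \leq C \bigl( \|\BG\|_{L^2(\Omega)} + \|\Bv^*\|_{H^1(\Omega)} \bigr) \leq C(1+\Phi) \|\Bv^*\|_{H^1(\Omega)} + C\|\BF^*\|_{L^2(\Omega)},
\end{equation*}
and inserting the $H^1$ bound produces the desired $C(1+\Phi^2)$ estimate.

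The main obstacle will be verifying that the Stokes $H^2$ regularity constant on the infinite cylinder is uniform in the slip coefficient $\alpha\geq 0$. The boundary term $\alpha|\partial_r(r\hat\psi)(1)|^2$ already present in \eqref{stream-smallflux} signals that the Navier condition contributes a nonnegative term in the energy identity, so the Stokes bilinear form remains coercive in $H^1$ uniformly in $\alpha$; combined with cross-sectional elliptic regularity whose constant does not deteriorate as $\alpha\to 0$ or $\alpha\to\infty$, the stated $\alpha$-independent $H^2$ bound will follow.
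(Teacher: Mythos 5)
Your overall structure—an $H^1$ bound on $\Bv^*$ from the stream-function estimate followed by a Stokes bootstrap—matches the paper's, and your algebraic observation that the left side of \eqref{stream-smallflux} collapses to $\int_0^1 |(\mathcal{L}-\xi^2)\hat\psi|^2 r\,dr = \|\widehat{\omega^\theta}\|_{L^2(r\,dr)}^2$ via integration by parts is correct. The gap is precisely the obstacle you flag at the end and then wave away. You want to apply Stokes $H^2$ regularity with the Navier slip condition on $\partial\Omega$ and a constant uniform in $\alpha\geq 0$. Coercivity of the associated bilinear form on $H^1$ uniformly in $\alpha$ (which is indeed signalled by the nonnegative $\alpha|\partial_r(r\hat\psi)(1)|^2$ term) is a statement about the $H^1$ theory; it does not transfer to a uniform $H^2$ elliptic regularity constant for an $\alpha$-dependent oblique boundary condition, and as $\alpha\to\infty$ the Navier condition degenerates toward Dirichlet, so uniformity there genuinely needs a proof. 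Asserting that ``the stated $\alpha$-independent $H^2$ bound will follow'' is exactly the missing step.

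The paper sidesteps this issue entirely. In \eqref{3-27} it invokes the Stokes $H^2$ estimate with \emph{prescribed Dirichlet boundary data} $\Bv^*|_{\partial\Omega}$—a constant that is manifestly independent of $\alpha$—and then controls the requisite $\|\Bv^*\|_{H^{3/2}(\partial\Omega)}$ via the trace theorem for axisymmetric functions, $\|\Bv^*\|_{H^{3/2}(\partial\Omega)} \leq C\bigl(\|\Bv^*\|_{H^1(\Omega)} + \|\partial_z\Bv^*\|_{H^1(\Omega)}\bigr)$. This is why the paper separately multiplies \eqref{3-9} and \eqref{3-11} by $\xi^2$ to obtain \eqref{3-21}--\eqref{3-25}, an $H^1$ bound on $\partial_z\Bv^*$—an ingredient that has no counterpart in your proposal. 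To repair your argument you would either need to actually establish the $\alpha$-uniform $H^2$ Stokes estimate for the Navier problem on the cylinder (nontrivial and not in the cited references in the form you need), or adopt the paper's device of treating the boundary data as given, which in turn forces you to prove the extra $\partial_z$ estimate you currently omit.
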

\begin{proof}
Integrating \eqref{3-12} with respect to $\xi$ yields
\be \la{3-13} \ba
& \int_{-\infty}^{+\infty} \int_0^1 \left\{\left( |\mathcal{L} \hat{\psi}|^2 + \xi^4 |\hat{\psi}|^2 \right)r  \,
+  \xi^2 \left|  \frac{\partial }{\partial r}( r \hat{\psi})   \right|^2 \frac{1}{r} \right\}\, dr d\xi +  \alpha \int_{-\infty}^{+ \infty} \left|  \frac{\partial}{\partial r} (r \hat{\psi}) (1, \xi )  \right|^2 \, d\xi
 \\
\leq &  C(1 + \Phi^2) \| \BF^* \|_{L^2(\Omega)}^2 .
\ea
\ee
Thus one has
\be \la{3-15} \ba
\|\Bv^* \|_{H^1 (\Omega)}^2  & = \|\nabla \Bv^* \|_{L^2(\Omega)}^2 + \|\Bv^*\|_{L^2(\Omega)}^2\\
&=  \int_{\Omega} {\rm curl}~(\omega^\theta \Be_\theta) \cdot \Bv_n^* \, dV + \int_{\partial \Omega} \frac{\partial v^z }{\partial r} \cdot v^z \, dS
+ \|\Bv^*\|_{L^2(\Omega)}^2 \\
& = \int_{\Omega} |\omega^\theta|^2 \, dV + \alpha \int_{\partial \Omega} |v^z|^2 \, dS - \alpha \int_{\partial \Omega} |v^z|^2 \, dS + \|\Bv^*\|_{L^2(\Omega)}^2 \\
& =   \|\Bo^\theta \|_{L^2(\Omega)}^2 +    \|\Bv^* \|_{L^2(\Omega)}^2   \\
 & \leq  \int_{-\infty}^{+ \infty} \int_0^1 | (\mathcal{L} - \xi^2) \hat{\psi}|^2 r \, dr d\xi +  \int_{-\infty}^{+\infty} \int_0^1 \left| \frac{\partial (r \hat{\psi}) }{\partial r}  \right|^2 \frac1r + \xi^2 |\hat{\psi}|^2 r \, dr d\xi  \\
& \leq C  (1 + \Phi^2) \|\BF^* \|_{L^2(\Omega)}^2 .
\ea \ee

Multiplying \eqref{3-9} and  \eqref{3-11} by $\xi^2$, respectively, yields
\be \label{3-21} \ba
&  \xi^2 \int_0^1 |\mathcal{L} \hat{\psi}|^2 r \, dr + 2\xi^4 \int_0^1 \left| \frac{d}{dr}( r \hat{\psi})    \right|^2 \frac1r \, dr + \xi^6 \int_0^1 |\hat{\psi} |^2 r \, dr + \alpha  \xi^2 \left| \frac{d }{dr} (r \hat{\psi} ) (1) \right|^2  \\
\leq & \int_0^1 | \widehat{F^r} | | \xi^3 \hat{\psi}| r \, dr + \int_0^1 |\widehat{F^z} | \left| \xi^2 \frac{d}{dr} ( r \hat{\psi} ) \right| \, dr +
\frac{4\Phi}{\pi} |\xi|^3 \int_0^1 \left| \frac{d}{dr} (r \hat{\psi})  \right| |r \hat{\psi}| \, dr
\ea
\ee
and
\be \label{3-22} \ba
& \frac{4\Phi}{\pi} |\xi|^3 \int_0^1 \left| \frac{d}{dr} (r \hat{\psi} ) \right| |r \hat{\psi}| \, dr \\
\leq & \frac14 (\xi^2 + \xi^4) \int_0^1 \left| \frac{d}{dr} (r \hat{\psi} )  \right|^2 \frac1r \, dr + C \Phi \left( \int_0^1 | \widehat{F^r} | | \xi^3 \hat{\psi}| r \, dr + \int_0^1 |\widehat{F^z} | \left| \xi^2  \frac{d}{dr} ( r \hat{\psi} ) \right| \, dr \right).
\ea
\ee
Taking \eqref{3-22} into \eqref{3-21} and using Cauchy-Schwarz inequality give
\be \label{3-23}
\ba
  & \xi^2 \int_0^1\left( |\mathcal{L} \hat{\psi}|^2 r \, + \xi^4  \left| \frac{d}{dr}(r \hat{\psi} )  \right|^2 \frac1r \, + \xi^6  |\hat{\psi}|^2 r \right)\, dr + \alpha \xi^2 \left|  \frac{d}{dr} (r \hat{\psi}) (1)  \right|^2 \\
\leq  & C (1+ \Phi^2) \int_0^1  (|\widehat{F^r} |^2 + |\widehat{F^z}|^2 ) r \, dr ,
\ea
\ee
Similar to the proof for the estimate \eqref{3-15}, one has
\be \label{3-25}
\| \partial_z \Bv^*\|_{H^1(\Omega)} \leq C (1 + \Phi) \|\BF^*\|_{L^2(\Omega)}.
\ee

Note that  $\Bv^*$ satisfies the equation
\be \label{3-26} \left\{  \ba
& \bar{U} \partial_z  \Bv^* + v^r  \partial_r \bBU -\Delta \Bv^*  + \nabla P = \BF^* \ \ \ \mbox{in}\ \ \Omega, \\
& {\rm div}~\Bv^* = 0\ \ \ \ \mbox{in} \ \ \Omega.
\ea  \right.
\ee
According to the regularity theory for Stokes equations (\hspace{1sp}\cite[Lemma VI.1.2]{Galdi}) and the trace theorem for axisymmetric functions, one has
\be \label{3-27} \ba
\|\Bv^*\|_{H^2 (\Omega)} & \leq C \left(\|\BF^*\|_{L^2(\Omega)} +  \Phi \|\partial_z \Bv^*\|_{L^2(\Omega)} +  \Phi \|v^r\|_{L^2(\Omega)} +  \|\Bv^*\|_{H^1(\Omega)} + \|\Bv^* \|_{H^{\frac32}(\partial \Omega) } \right) \\
& \leq C (1 + \Phi^2 ) \|\BF^*\|_{L^2(\Omega)} + C \|\partial_z \Bv^*\|_{H^1(\Omega)} \\
& \leq C (1 + \Phi^2 ) \|\BF^*\|_{L^2(\Omega)} .
\ea
\ee
This finishes the proof of the proposition.
\end{proof}


\subsection{Uniform estimate for the case with large flux and low frequency}
In this subsection,  the uniform estimate for the solutions of \eqref{2-0-8}--\eqref{FBC} with respect to $\Phi$ and $\alpha$ is obtained  when the flux is large and the frequency is low. We assume that $\Phi >1$ in this subsection.
\begin{pro}\label{Bpropcase1}
Assume that $|\xi| \leq \frac{1}{\epsilon_1 \Phi}\leq 1 $. Let $\hat{\psi}(r, \xi)$ be a smooth solution of the problem
\eqref{2-0-8}--\eqref{FBC}, then one has
\be \label{3-31}
\int_0^1 |\mathcal{L} \hat{\psi} |^2 r \, dr + 2 \xi^2 \int_0^1 \left| \frac{d}{dr}(r \hat{\psi})   \right|^2 \frac{1}{r}\, dr
+ \xi^4 \int_0^1 |\hat{\psi}|^2 r\, dr \leq C (\epsilon_1) \int_0^1 |\what{\BF^*}|^2 r \, dr.
\ee
\end{pro}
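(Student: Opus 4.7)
\medskip
\noindent\textbf{Proof plan.}
The proposition upgrades the general bound \eqref{stream-smallflux} by replacing the $(1+\Phi^2)$ factor with a constant $C(\epsilon_1)$ under the low-frequency hypothesis $|\xi|\leq 1/(\epsilon_1\Phi)$. The plan is to revisit the energy identity \eqref{3-5} and use the fact that now $\Phi|\xi|\leq 1/\epsilon_1$ is bounded by a quantity depending only on $\epsilon_1$, so the convective term on the right --- which in Proposition \ref{smallflux} was responsible for the $\Phi^2$ loss through \eqref{3-11} --- can be absorbed into the biharmonic-type LHS instead of being estimated by Young's inequality with weight $\Phi$. Heuristically, under this frequency restriction the transport operator $i\xi\bar{U}(\mathcal{L}-\xi^2)$ behaves as a bounded perturbation of the biharmonic operator $(\mathcal{L}-\xi^2)^2$.

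The first step is to write the energy identity \eqref{3-5} and isolate the only RHS term with a $\Phi$ factor, namely
\[
I_{\text{conv}}:=\frac{4\Phi\alpha|\xi|}{\pi(4+\alpha)}\left|\Im\int_0^1\frac{d}{dr}(r\hat\psi)\,r\overline{\hat\psi}\,dr\right|
\leq \frac{4\Phi|\xi|}{\pi}\int_0^1\left|\frac{d}{dr}(r\hat\psi)\right||r\hat\psi|\,dr.
\]
Second, I would establish (or invoke from the appendix) the sharp weighted Poincar\'e-type bounds coming from integration by parts and the boundary conditions $\hat\psi(0)=\hat\psi(1)=0$:
\[
\int_0^1\left|\tfrac{d}{dr}(r\hat\psi)\right|^2\tfrac{dr}{r}=-\int_0^1\mathcal{L}\hat\psi\,\overline{\hat\psi}\,r\,dr,
\qquad
\int_0^1|\hat\psi|^2 r\,dr\leq C\int_0^1\left|\tfrac{d}{dr}(r\hat\psi)\right|^2\tfrac{dr}{r},
\]
so that both $\|\hat\psi\|_{L^2(r\,dr)}$ and $\|d(r\hat\psi)/dr\|_{L^2(dr/r)}$ are controlled by $\|\mathcal{L}\hat\psi\|_{L^2(r\,dr)}$. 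Together with $|r\hat\psi|^2 r\leq|\hat\psi|^2 r$, Cauchy--Schwarz yields
\[
I_{\text{conv}}\leq C\,\Phi|\xi|\,\|\mathcal{L}\hat\psi\|_{L^2(r\,dr)}^2\leq \frac{C}{\epsilon_1}\,\|\mathcal{L}\hat\psi\|_{L^2(r\,dr)}^2,
\]
where $C$ is a universal constant coming from the Poincar\'e step.

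The force term $-\Re\int\hat f\overline{\hat\psi}\,r\,dr$ is handled by writing $\hat f=i\xi\widehat{F^r}-\partial_r\widehat{F^z}$ and integrating by parts the $F^z$ piece (with boundary contributions vanishing because $\hat\psi(0)=\hat\psi(1)=0$), which gives a bound of the form $C(|\xi|\|\widehat{F^r}\|+\|\widehat{F^z}\|)\cdot\|\mathcal{L}\hat\psi\|_{L^2(r\,dr)}$; since $|\xi|\leq 1$ under the hypothesis, this is in turn bounded by $C\,\|\widehat{\BF^*}\|_{L^2(r\,dr)}\|\mathcal{L}\hat\psi\|_{L^2(r\,dr)}$, and Young's inequality produces $\tfrac{1}{4}\|\mathcal{L}\hat\psi\|^2+C\|\widehat{\BF^*}\|^2$. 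Plugging both estimates into \eqref{3-5}, and dropping the non-negative boundary term $\alpha|d(r\hat\psi)/dr(1)|^2$, yields
\[
\left(1-\tfrac{1}{4}-\tfrac{C}{\epsilon_1}\right)\|\mathcal{L}\hat\psi\|^2_{L^2(r\,dr)}
+2\xi^2\!\int_0^1\!\left|\tfrac{d(r\hat\psi)}{dr}\right|^2\!\tfrac{dr}{r}
+\xi^4\!\int_0^1|\hat\psi|^2 r\,dr \leq C\|\widehat{\BF^*}\|_{L^2(r\,dr)}^2.
\]

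The main obstacle is precisely the absorption in the last display: one needs the universal constant appearing in $I_{\text{conv}}$ together with $1/\epsilon_1$ to be strictly less than the coefficient of $\|\mathcal{L}\hat\psi\|^2$ on the LHS. This forces $\epsilon_1$ to be bounded below by a threshold determined by the first eigenvalue of $\mathcal{L}$ with the given boundary conditions on the weighted space $L^2(r\,dr)$; this threshold is independent of $\Phi$ and $\alpha$, and the statement encodes this through the constant $C(\epsilon_1)$ which blows up as $\epsilon_1$ approaches that threshold. Once this careful tracking of the Poincar\'e constants is done, \eqref{3-31} follows immediately.
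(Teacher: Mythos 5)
There is a genuine gap in your argument: you attempt to absorb the convective term $I_{\text{conv}}$ into the left-hand side after bounding it by $\frac{C}{\epsilon_1}\|\mathcal{L}\hat\psi\|^2_{L^2(r\,dr)}$, and you correctly observe that this requires $\epsilon_1$ to be \emph{bounded from below} by a universal threshold. But this is not a quirk that the statement ``encodes through $C(\epsilon_1)$''; it is fatal. The parameter $\epsilon_1$ is chosen small in the rest of the paper (in Propositions \ref{Bpropcase3}--\ref{case5} the intermediate-frequency analysis explicitly requires $\epsilon_1\in(0,1)$ small), so the estimate must hold for arbitrarily small $\epsilon_1$ with a constant $C(\epsilon_1)$ that may blow up as $\epsilon_1\to 0$ --- not a constant that forces $\epsilon_1$ to stay large. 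Your chain
\[
I_{\text{conv}}\lesssim \Phi|\xi|\left(\int_0^1\left|\tfrac{d}{dr}(r\hat\psi)\right|^2\tfrac{dr}{r}\right)^{\!1/2}\!\left(\int_0^1|\hat\psi|^2 r\,dr\right)^{\!1/2}\lesssim \Phi|\xi|\,\|\mathcal{L}\hat\psi\|^2_{L^2(r\,dr)}\lesssim \tfrac{1}{\epsilon_1}\|\mathcal{L}\hat\psi\|^2_{L^2(r\,dr)}
\]
cannot be absorbed when $\epsilon_1\ll 1$, so this route closes only in a useless regime.

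The missing ingredient is the \emph{imaginary part} of the energy identity, equation \eqref{3-7}, which is what the paper actually uses. Pairing the equation with $r\overline{\hat\psi}$ and taking the imaginary part gives
\[
|\xi|\int_0^1\frac{\bar U(r)}{r}\left|\tfrac{d}{dr}(r\hat\psi)\right|^2 dr+|\xi|^3\int_0^1 \bar U(r)|\hat\psi|^2 r\,dr=\left|\Im\int_0^1\hat f\,\overline{\hat\psi}\,r\,dr\right|.
\]
Since $\bar U(r)\geq \tfrac{\Phi}{\pi}(1-r^2)$, combined with Lemma \ref{lemmaHLP} this yields the crucial estimate \eqref{3-33}, in which $\Phi|\xi|\int_0^1|\hat\psi|^2 r\,dr$ is bounded by a product $\|\widehat{\BF^*}\|\cdot(\dots)^{1/2}$ --- i.e., the $L^2$ norm of $\hat\psi$ already carries a gain of $\Phi^{-1}$ built in. The paper then applies Young's inequality to $I_{\text{conv}}$ with weight $\Phi^2\xi^2$ (not $\Phi|\xi|$ as in your version), uses $\Phi|\xi|\leq 1/\epsilon_1$ to reduce $\Phi^2\xi^2\int|\hat\psi|^2 r\,dr$ to $\tfrac{1}{\epsilon_1}\Phi|\xi|\int|\hat\psi|^2 r\,dr$, and only then invokes \eqref{3-33} and a second Young step. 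The point is that \eqref{3-33} provides a bound that is linear in the forcing (after the $(\dots)^{1/2}$ is absorbed), so the $1/\epsilon_1$ factor lands on the force term and becomes $C(\epsilon_1)\|\widehat{\BF^*}\|^2$ rather than on the biharmonic energy. Your weighted Poincar\'e bounds by themselves cannot reproduce this $\Phi^{-1}$ gain because they never see the large coefficient $\bar U\sim\Phi$ in the transport term; they only see the Laplacian's coercivity, which is $O(1)$. You need to use both the real and the imaginary parts of the pairing, not just \eqref{3-5}.
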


\begin{proof}
It follows from \eqref{3-5}, \eqref{3-7}, and  Lemma \ref{lemmaHLP} that one has
\be \label{3-33} \ba
& \Phi |\xi| \int_0^1 |\hat{\psi}|^2 r\, dr + \Phi |\xi| \int_0^1 \frac{1-r^2}{r} \left|  \frac{d}{dr} ( r \hat{\psi}) \right|^2 \frac1r \, dr + \Phi |\xi|^3 \int_0^1 (1 - r^2) |\hat{\psi}|^2 r \, dr \\
\leq & C \left( \int_0^1 |\what{\BF^*}|^2 r \, dr \right)^{\frac12} \left( \int_0^1 \xi^2 |\hat{\psi}|^2 r \, dr + \int_0^1 \left| \frac{d}{dr}(r \hat{\psi} ) \right|^2 \frac1r \, dr   \right)^{\frac12}
 \ea \ee
and
\be \label{3-34} \ba
& \int_0^1 |\mathcal{L} \hat{\psi}|^2 r \, dr + 2\xi^2 \int_0^1 \left| \frac{d}{dr}( r \hat{\psi} )  \right|^2 \frac1r \, dr + \xi^4 \int_0^1 |\hat{\psi}|^2 r \, dr  + \alpha \left| \frac{d}{dr} ( r\hat{\psi} ) (1)  \right|^2  \\
\leq &  C \left( \int_0^1 |\what{\BF^*}|^2 r \, dr \right)^{\frac12} \left( \int_0^1 \xi^2 |\hat{\psi}|^2 r \, dr + \int_0^1 \left| \frac{d}{dr}(r \hat{\psi} ) \right|^2 \frac1r \, dr   \right)^{\frac12}\\
&+ \frac{4\Phi}{\pi} \frac{\alpha}{4+ \alpha} |\xi| \int_0^1 \left| \frac{d}{dr}( r \hat{\psi} )  \right| | r \hat{\psi}| \, dr .
\ea
\ee
According to Lemma \ref{lemma1} and \eqref{3-33}, it holds that
\be \label{3-35} \ba
& \frac{4 \Phi}{\pi} \frac{\alpha}{4 + \alpha }|\xi| \int_0^1 \left| \frac{d}{dr} (  r \hat{\psi} ) \right| |r \hat{\psi}| \, dr \\
\leq & \frac14 \int_0^1 \left| \frac{d}{dr} ( r \hat{\psi} ) \right|^2 \frac1r \, dr + C \Phi^2 \xi^2 \int_0^1 |\hat{\psi}|^2 r \, dr \\
\leq & \frac14 \int_0^1 |\mathcal{L} \hat{\psi} |^2 r\, dr + \frac{C}{\epsilon_1} \left( \int_0^1 |\what{\BF^*}|^2 r \, dr  \right)^{\frac12}
\left[ \int_0^1 \left( \xi^2 |\hat{\psi}|^2 r + \left|   \frac{d}{dr}(r \hat{\psi}) \right|^2 \frac1r \right) \, dr   \right]^{\frac12} \\
\leq & \frac14 \int_0^1 |\mathcal{L} \hat{\psi} |^2 r\, dr + \frac14 \xi^2 \int_0^1 |\hat{\psi}|^2 r \, dr + \frac14 \int_0^1 \left| \frac{d}{dr} (r \hat{\psi}) \right|^2 \frac1r \, dr + C (\epsilon_1) \int_0^1 |\what{\BF^*}|^2 r \, dr \\
\leq & \frac12 \int_0^1 |\mathcal{L} \hat{\psi}|^2 r \, dr + \frac14 \xi^2 \int_0^1 \left| \frac{d}{dr} ( r \hat{\psi})  \right|^2 \frac1r \, dr +
C (\epsilon_1) \int_0^1 |\what{\BF^*}|^2 r \, dr .
\ea
\ee
Substituting \eqref{3-35} into \eqref{3-34} and using Young inequality give \eqref{3-31}. Thus  the proof of Proposition \ref{Bpropcase1} is completed.
\end{proof}

Let
\be \nonumber
\chi_1 (\xi ) = \left\{ \ba & 1 , \ \ \ |\xi| \leq \frac{1}{\epsilon_1 \Phi} , \\
& 0,\ \ \ \ \text{otherwise},  \ea  \right.
\ee
and $\psi_{low}$ be the function  such that
$
\what{\psi_{low}}  =  \chi_1 (\xi) \hat{\psi}.
$
Define
\be \nonumber
v^r_{low} = \partial_z \psi_{low},\ \ \ \ v^z_{low} = - \frac{\partial_r ( r \psi_{low} )}{r}, \ \ \ \mbox{and}\  \ \Bv^*_{low} = v^r_{low}\Be_r + v^z_{low}\Be_z.
\ee
Similarly, one can define $F^r_{low}, F^z_{low}, \BF^*_{low}$, and $\Bo^\theta_{low}$.

\begin{pro}\label{Bpropcase1-1} The solution $\Bv^*$  satisfies
\be \label{3-36}
\|\Bv^*_{low} \|_{H^2(\Omega)} \leq C \|\BF^*_{low}\|_{L^2(\Omega)},
\ee
and
\be \label{3-36-1}
\|v^r_{low} \|_{L^2(\Omega)} \leq C \Phi^{-1} \|\BF^*_{low} \|_{L^2(\Omega)} ,\ \ \ \ \|\partial_z v^z_{low} \|_{L^2(\Omega)}
\leq C \Phi^{-1} \|\BF^*_{low} \|_{L^2(\Omega)},
\ee
where $C$ is a uniform constant independent of $\Phi$, $\alpha$, and $\BF$.
\end{pro}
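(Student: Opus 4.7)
The plan is to integrate the $\xi$-wise estimate \eqref{3-31} over the low-frequency band $|\xi|\le 1/(\epsilon_1\Phi)$ via Plancherel, and to extract the $\Phi^{-1}$ factor in \eqref{3-36-1} from the smallness of $\xi$ itself on this band. A key preliminary step is the weighted Poincar\'e-type inequality
\[
\|\hat\psi\|_{L^2(r\,dr)}^2 + \|\partial_r(r\hat\psi)\|_{L^2(dr/r)}^2 \;\leq\; C\|\mathcal{L}\hat\psi\|_{L^2(r\,dr)}^2,
\]
which I would obtain by pairing $\mathcal{L}\hat\psi$ with $\overline{\hat\psi}\,r$, integrating by parts, and using the homogeneous boundary data $\hat\psi(0)=\hat\psi(1)=0$ (the boundary term vanishes, and what remains is $-\int_0^1 |(r\hat\psi)'|^2/r\,dr$). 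Combined with \eqref{3-31}, this controls $\|\hat\psi\|_{L^2(r\,dr)}$ and $\|\partial_r(r\hat\psi)\|_{L^2(dr/r)}$ by $C(\epsilon_1)\|\widehat{\BF^*}\|_{L^2(r\,dr)}$ uniformly in $\xi$.

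Using $\widehat{v^r_{low}}=i\xi\chi_1(\xi)\hat\psi$ and $\widehat{\partial_z v^z_{low}}=-i\xi\chi_1(\xi)\partial_r(r\hat\psi)/r$, I would then insert the trivial bound $\xi^2\le 1/(\epsilon_1\Phi)^2$ available on the support of $\chi_1$ and integrate in $\xi$ to get
\[
\|v^r_{low}\|_{L^2(\Omega)}^2 + \|\partial_z v^z_{low}\|_{L^2(\Omega)}^2 \;\leq\; \frac{C(\epsilon_1)}{\Phi^2}\|\BF^*_{low}\|_{L^2(\Omega)}^2,
\]
which is \eqref{3-36-1}. A further application of $\xi^4\le \xi^2/(\epsilon_1\Phi)^2$ yields $\|\partial_z v^r_{low}\|_{L^2}\le C\Phi^{-2}\|\BF^*_{low}\|_{L^2}$, so altogether $\|\partial_z\Bv^*_{low}\|_{L^2}\le C\Phi^{-1}\|\BF^*_{low}\|_{L^2}$. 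This is the crucial gain that will neutralize the large-$\Phi$ Poiseuille coefficients in the next step.

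For \eqref{3-36}, I would first recover the uniform $H^1$ bound $\|\Bv^*_{low}\|_{H^1}\le C\|\BF^*_{low}\|_{L^2}$ by repeating the computation of \eqref{3-15}, noting that both the condition $v^r_{low}=0$ on $\partial\Omega$ and the Navier condition are preserved by the frequency cutoff, so the $\alpha$-boundary contributions again cancel. Then I would apply Stokes regularity to \eqref{3-26} with right-hand side $\BF^*_{low}-\bar U\partial_z\Bv^*_{low}-v^r_{low}\partial_r\bBU$, in the spirit of \eqref{3-27}. Since $\|\bar U\|_{L^\infty}+\|\partial_r\bBU\|_{L^\infty}\le C\Phi$, the $\Phi^{-1}$ estimates of the previous paragraph give
\[
\|\bar U\partial_z\Bv^*_{low}\|_{L^2}+\|v^r_{low}\partial_r\bBU\|_{L^2}\;\le\; C\|\BF^*_{low}\|_{L^2},
\]
and the claimed $\Phi$-independent $H^2$ bound follows. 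The main obstacle I anticipate is the uniform control of the trace term $\|\Bv^*_{low}\|_{H^{3/2}(\partial\Omega)}$ with respect to $\alpha$, which I would address by the same interpolation/trace argument used in \eqref{3-27}, taking advantage of the favorable-sign boundary contribution $\alpha|\partial_r(r\hat\psi)(1)|^2$ already present on the left-hand side of \eqref{3-5} to supply the needed $\alpha$-weighted boundary control of $v^z_{low}$.
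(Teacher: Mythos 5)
Your proposal is correct and matches the paper's argument step for step: you re-derive the weighted Poincar\'e inequality that the paper records as Lemma \ref{lemma1}, combine it with Proposition \ref{Bpropcase1} and the bound $\xi^2\le(\epsilon_1\Phi)^{-2}$ on the low-frequency band to obtain the $\Phi^{-1}$ gains in \eqref{3-36-1}, and then close the $H^2$ estimate via Stokes regularity for \eqref{stokes-low} together with the axisymmetric trace theorem, exactly as in \eqref{3-37}--\eqref{3-30-1}. The only superfluous step is your final precaution about the $\alpha$-dependence of $\|\Bv^*_{low}\|_{H^{3/2}(\partial\Omega)}$: since the axisymmetric trace bound controls this by $\|\Bv^*_{low}\|_{H^1}+\|\partial_z\Bv^*_{low}\|_{H^1}$ with a constant independent of $\alpha$, the favorable-sign boundary term $\alpha|\partial_r(r\hat\psi)(1)|^2$ from \eqref{3-5} is never actually needed here.
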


\begin{proof} Note that $\Bv^*_{low}$ is a strong solution to the following Stokes equations
\be \label{stokes-low} \left\{ \ba
& - \Delta \Bv^*_{low} + \nabla P = \BF^*_{low} - \bar{U} \partial_z \Bv^*_{low} - v^r_{low} \partial_r \bBU \ \ \ \mbox{in}\ \Omega, \\
& {\rm div}~\Bv^*_{low} = 0\ \ \ \ \mbox{in}\ \Omega.
\ea \right. \ee
According to the regularity theory for Stokes equations (\hspace{1sp}\cite[Lemma VI.1.2]{Galdi}) and the trace theorem for axisymmetric functions, one has
\be \label{3-37} \ba
\|\Bv^*_{low} \|_{H^2(\Omega)}
\leq & C \Big(\|\BF^*_{low}\|_{L^2(\Omega)} +  \Phi \|\partial_z \Bv^*_{low} \|_{L^2(\Omega)} +
 \Phi\| v^r_{low} \|_{L^2(\Omega)}  \\
&\ \ \ \ +  \|\Bv^*_{low}\|_{H^1(\Omega)} +  \|\partial_z \Bv^*_{low}\|_{H^1(\Omega)}\Big).
\ea
\ee
Herein, by virtue of the estimate \eqref{3-31} and Lemma \ref{lemma1}, it holds that
\be \label{3-38} \ba
\| v^r_{low} \|_{L^2(\Omega)} & \leq C \left( \int_{|\xi| \leq \frac{1}{\epsilon_1 \Phi} }\xi^2 \int_0^1 |\hat{\psi}|^2 r\, dr d\xi       \right)^{\frac12} \\
& \leq C \left(  \int_{|\xi| \leq \frac{1}{\epsilon_1 \Phi} } \xi^2 \int_0^1 |\mathcal{L} \hat{\psi} |^2 r \, dr d\xi         \right)^{\frac12}\\
& \leq C \left( \int_{|\xi| \leq \frac{1}{\epsilon_1 \Phi} }\frac{1}{(\epsilon_1\Phi)^2} C(\epsilon_1)  \int_0^1 \left| \what{\BF^*}  \right|^2 r \, dr d\xi       \right)^{\frac12} \\
& \leq C(\epsilon_1) \Phi^{-1} \|\BF^*_{low}\|_{L^2(\Omega)}
\ea
\ee
and
\be \label{3-39} \ba
& \| \partial_z \Bv^*_{low}\|_{L^2(\Omega)}  \leq \|\partial_z v^r_{low}\|_{L^2(\Omega)} + \|\partial_z v^z_{low} \|_{L^2(\Omega)} \\
\leq & C \left(  \int_{|\xi| \leq \frac{1}{\epsilon_1 \Phi} } \xi^4 \int_0^1  |\hat{\psi}|^2 r \, dr d\xi \right)^{\frac12} + C  \left(
\int_{|\xi| \leq \frac{1}{\epsilon_1 \Phi} } \xi^2 \int_0^1 \left| \frac{\partial}{\partial r} ( r \hat{\psi} ) \right|^2 \frac1r    \, dr d\xi         \right)^{\frac12} \\
\leq & C \Phi^{-1} \|\BF^*_{low}\|_{L^2(\Omega)} .
\ea \ee
Moreover,  one has
\be \label{3-30-0}
\|v^z_{low} \|_{L^2(\Omega)} \leq C \left(  \int_{ |\xi| \leq \frac{1}{\epsilon_1 \Phi} } \int_0^1 \left|  \frac{\partial }{\partial r}(r \hat{\psi} ) \right|^2 \frac1r \, dr d\xi    \right)^{\frac12}
\leq C \| \BF^*_{low} \|_{L^2(\Omega)}
\ee
and
\be \label{3-30} \ba
 \|  \Bv^*_{low} \|_{H^1 (\Omega)}   \leq  & \|\Bo^\theta_{low}\|_{L^2(\Omega)}  + \| \Bv^*_{low} \|_{L^2(\Omega)} \\
\leq & C \left( \int_{|\xi| \leq \frac{1}{\epsilon_1 \Phi} }  \int_0^1  | ( \mathcal{L} - \xi^2) \hat{\psi}|^2 r \, dr d\xi \right)^{\frac12}
+ C \|\BF^*_{low} \|_{L^2(\Omega)}  \\
\leq &   C \| \BF^*_{low} \|_{L^2(\Omega)}.
\ea \ee
Similarly, it holds that
\be \label{3-30-1}
\|\partial_z  \Bv^*_{low} \|_{H^1 (\Omega)} \leq  \|\partial_z \Bo^\theta_{low} \|_{L^2(\Omega)} +  \|\partial_z \Bv^*_{low} \|_{L^2(\Omega)}
\leq C \|\BF^*_{low} \|_{L^2(\Omega)} .
\ee
Summing \eqref{3-37}-\eqref{3-30-1} together gives \eqref{3-36}. This completes the proof of the proposition.
\end{proof}

\subsection{Uniform estimate for the case with large flux and high frequency}
In this subsection,  the uniform estimate for the solutions of \eqref{2-0-8}--\eqref{FBC} is established when the flux is large and the frequency is high.
\begin{pro}\label{Bpropcase2}
Assume that $|\xi| \geq \epsilon_1 \sqrt{\Phi} \geq 1 $. Let $\hat{\psi}$ be a  smooth solution to the problem \eqref{2-0-8}--\eqref{FBC}, then one has
\be \label{highf1}
 \int_0^1 |\mathcal{L} \hat{\psi}|^2 r \, dr + \xi^2 \int_0^1 \left| \frac{d}{dr} ( r \hat{\psi})    \right|^2 \frac{1}{r} \, dr
+ \xi^4 \int_0^1 |\hat{\psi}|^2 r \, dr
\leq C(\epsilon_1) |\xi|^{-2}  \int_0^1 |\what{\BF^*}|^2 r \, dr
\ee
and
\be \label{highf2}
 |\xi| \int_0^1 \frac{\bar{U}(r)}{r}  \left| \frac{d}{dr} ( r \hat{\psi}) \right|^2 \, dr +  |\xi|^3 \int_0^1 \bar{U}(r) |\hat{\psi}|^2 r \, dr
\leq C(\epsilon_1)  |\xi|^{-2}  \int_0^1 |\what{\BF^*}|^2 r \, dr .
\ee
\end{pro}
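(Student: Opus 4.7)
The strategy is to mirror the proofs of Propositions \ref{smallflux} and \ref{Bpropcase1}: test \eqref{2-0-8} against $r\overline{\hat{\psi}}$, separate real and imaginary parts to obtain the two identities \eqref{3-5} and \eqref{3-7}, and exploit the high-frequency constraint $\Phi\leq \xi^2/\epsilon_1^2$ (equivalent to $|\xi|\geq\epsilon_1\sqrt{\Phi}$) to produce the factor $|\xi|^{-2}$ on the right-hand side of both \eqref{highf1} and \eqref{highf2}.

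First, I would estimate the source term $\int\hat{f}\,\overline{\hat{\psi}}\,r\,dr$ with $\hat{f}=i\xi\widehat{F^r}-\partial_r\widehat{F^z}$. Integrating by parts in the $\widehat{F^z}$ summand (using $\hat{\psi}(0)=\hat{\psi}(1)=0$) and applying Cauchy--Schwarz yields the bound
\[
\left|\int \hat{f}\,\overline{\hat{\psi}}\,r\,dr\right|\leq |\xi|\,\|\widehat{F^r}\|_{L^2(r\,dr)}\|\hat{\psi}\|_{L^2(r\,dr)}+\|\widehat{F^z}\|_{L^2(r\,dr)}\|(r\hat{\psi})'\|_{L^2(r^{-1}\,dr)}.
\]
A weighted Young's inequality of the form $ab\leq a^2/(2|\xi|^{2k})+|\xi|^{2k}b^2/2$, applied with $k=2$ to the first summand and $k=1$ to the second, converts this to $C|\xi|^{-2}\|\widehat{\BF^*}\|_{L^2(r\,dr)}^2$ plus a small multiple of $\xi^4\int|\hat{\psi}|^2 r\,dr+\xi^2\int|(r\hat{\psi})'|^2/r\,dr$, which is absorbed into the left-hand side of \eqref{3-5}.

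The crux is the convective bad term $\frac{4\Phi\alpha}{\pi(4+\alpha)}|\xi|\int|(r\hat{\psi})'||r\hat{\psi}|\,dr$ on the right of \eqref{3-5}. A naive Cauchy--Schwarz combined with the Hardy bound $\int|\hat{\psi}|^2 r\,dr\leq 4\int|(r\hat{\psi})'|^2/r\,dr$ yields only $2\Phi|\xi|\int|(r\hat{\psi})'|^2/r\,dr$, and since $\Phi|\xi|\leq |\xi|^3/\epsilon_1^2$ this is super-critical relative to the $2\xi^2\int|(r\hat{\psi})'|^2/r\,dr$ on the left of \eqref{3-5}. My plan is to mirror the chain \eqref{3-35} from the low-frequency proof, but replace the substitution $\Phi|\xi|\leq \epsilon_1^{-1}$ (valid there) by the high-frequency analogue $\Phi|\xi|^{-1}\leq |\xi|\epsilon_1^{-2}$: first bound the bad term by $\tfrac14\int|(r\hat{\psi})'|^2/r\,dr+C\Phi^2\xi^2\int|\hat{\psi}|^2 r\,dr$ through Young, then factor the second summand as $\Phi|\xi|^{-1}\cdot(\Phi|\xi|^3\int|\hat{\psi}|^2 r\,dr)$, and invoke the imaginary-part identity \eqref{3-7} together with Lemma \ref{lemmaHLP} to control $\Phi|\xi|^3\int|\hat{\psi}|^2 r\,dr+\Phi|\xi|\int\frac{1-r^2}{r}|(r\hat{\psi})'|^2\,dr$ by $\int|\hat{f}||\hat{\psi}|r\,dr$. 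A final application of Young's inequality absorbs the leftover $\hat{\psi}$- and $(r\hat{\psi})'$-contributions into the left-hand side of \eqref{3-5}, producing \eqref{highf1}. The hardest part is precisely this closure: one must show that the coefficient produced by stacking all of these Youngs and using $\Phi|\xi|^{-1}\leq |\xi|\epsilon_1^{-2}$ remains absorbable into the left-hand side without destroying the $|\xi|^{-2}$ factor on the right, and in particular remains independent of $\alpha$, which enters only through the bounded weight $\alpha/(4+\alpha)\leq 1$.

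Once \eqref{highf1} is proved, \eqref{highf2} is immediate. Indeed, the left-hand side of \eqref{highf2} equals $|\Im\int\hat{f}\,\overline{\hat{\psi}}\,r\,dr|$ via \eqref{3-7}, which by the source bound above is at most $|\xi|\|\widehat{F^r}\|_{L^2(r\,dr)}\|\hat{\psi}\|_{L^2(r\,dr)}+\|\widehat{F^z}\|_{L^2(r\,dr)}\|(r\hat{\psi})'\|_{L^2(r^{-1}\,dr)}$; substituting the estimates $\|\hat{\psi}\|_{L^2(r\,dr)}\leq C(\epsilon_1)|\xi|^{-3}\|\widehat{\BF^*}\|_{L^2(r\,dr)}$ and $\|(r\hat{\psi})'\|_{L^2(r^{-1}\,dr)}\leq C(\epsilon_1)|\xi|^{-2}\|\widehat{\BF^*}\|_{L^2(r\,dr)}$, both consequences of \eqref{highf1}, yields the required $C(\epsilon_1)|\xi|^{-2}\int|\widehat{\BF^*}|^2 r\,dr$ on the right.
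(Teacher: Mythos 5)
Your proposal identifies the right structure (identities \eqref{3-5}, \eqref{3-7}, Young on the source term) and correctly diagnoses the crux as the convective term, but the specific chain you propose for absorbing it does not close, for two reasons.

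\textbf{First}, the intermediate bound you claim — that $\Phi|\xi|^3\int_0^1|\hat{\psi}|^2 r\,dr+\Phi|\xi|\int_0^1\frac{1-r^2}{r}|(r\hat{\psi})'|^2\,dr$ is controlled by $|\int\hat{f}\overline{\hat{\psi}}r\,dr|$ — is off by a factor of $\xi^2$ in the first summand. From \eqref{3-7} and $\bar{U}\geq\frac{\Phi}{\pi}(1-r^2)$, what one actually gets is $\Phi|\xi|\int_0^1\frac{1-r^2}{r}|(r\hat{\psi})'|^2\,dr+\Phi|\xi|^3\int_0^1(1-r^2)|\hat{\psi}|^2 r\,dr\leq C|\int\hat{f}\overline{\hat{\psi}}r\,dr|$; the first summand has only $|\xi|^1$, and the second carries the weight $(1-r^2)$. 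Applying Lemma \ref{lemmaHLP} to strip the weight from $\int|\hat{\psi}|^2 r\,dr$ bounds it by $\int\frac{1-r^2}{r}|(r\hat{\psi})'|^2\,dr$, which in turn sits in \eqref{3-7} only with the small coefficient $\Phi|\xi|$ — not $\Phi|\xi|^3$. You therefore end up with $\Phi|\xi|^3\int|\hat{\psi}|^2 r\,dr\leq C\xi^2|\int\hat{f}\overline{\hat{\psi}}r\,dr|$, two powers worse than claimed. \textbf{Second}, even if your intermediate bound were true, the final Youngs would still not produce the $|\xi|^{-2}$ factor: after the factoring $\Phi^2\xi^2=(\Phi|\xi|^{-1})\cdot(\Phi|\xi|^3)$ and the substitution $\Phi|\xi|^{-1}\leq|\xi|\epsilon_1^{-2}$, you are left with $C(\epsilon_1)|\xi|\int|\hat{f}||\hat{\psi}|r\,dr$ on the right of \eqref{3-5}, and the only Young that lets you absorb $|\xi|\cdot|\xi|\|\widehat{F^r}\|\|\hat{\psi}\|$ into $\xi^4\|\hat{\psi}\|^2$ leaves $\|\widehat{F^r}\|^2$ with an $O(1)$ coefficient, not $|\xi|^{-2}$. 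More fundamentally, the very first Young step $\frac{4\Phi}{\pi}|\xi|\int|(r\hat{\psi})'||r\hat{\psi}|\leq\tfrac14\int|(r\hat{\psi})'|^2/r+C\Phi^2\xi^2\int|\hat{\psi}|^2 r$ is already too lossy: at the boundary $\Phi\approx\xi^2/\epsilon_1^2$ of the high-frequency regime, $\Phi^2\xi^2\approx\xi^6/\epsilon_1^4\gg\xi^4$, so this coefficient can never be absorbed by $\xi^4\int|\hat{\psi}|^2 r\,dr$, no matter how cleverly you later trade against \eqref{3-7}.

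The paper avoids both pitfalls by not using a symmetric Young at all. It applies Lemma \ref{weightinequality} (the $\tfrac23$--$\tfrac13$ weighted interpolation, not the Hardy--Littlewood--P\'olya estimate) to split the bad term into a product of fractional powers with exponents $\tfrac16+\tfrac13+\tfrac16+\tfrac13$ across $\int|\mathcal{L}\hat{\psi}|^2 r$, $\int\frac{1-r^2}{r}|(r\hat{\psi})'|^2$, $\int\frac{1}{r}|(r\hat{\psi})'|^2$, and $\int(1-r^2)|\hat{\psi}|^2 r$. Young with those exponents gives $\tfrac12\int|\mathcal{L}\hat{\psi}|^2 r+\xi^2\int|(r\hat{\psi})'|^2/r+C\Phi^{3/2}(\dots)$, and the $\Phi^{3/2}$-terms are exactly $\Phi^{1/2}|\xi|^{-1}$ times what \eqref{3-7} controls — so in the high-frequency regime they become $C\epsilon_1^{-1}|\int\hat{f}\overline{\hat{\psi}}r\,dr|$, with no residual power of $\xi$. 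That is what lets the final Youngs $\int|\widehat{F^r}||\xi\hat{\psi}|r\leq\tfrac{\eta}{2}\xi^4\int|\hat{\psi}|^2 r+\tfrac{C}{\eta}|\xi|^{-2}\int|\widehat{F^r}|^2 r$ produce the $|\xi|^{-2}$ factor. The missing idea in your proposal is precisely Lemma \ref{weightinequality} and the fractional-exponent split; it is not a matter of a more careful Young.

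Your derivation of \eqref{highf2} from \eqref{highf1} via \eqref{3-7} and the source bound is correct.
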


\begin{proof}  According to \eqref{3-7}-\eqref{3-7-1} and Lemma \ref{weightinequality}, one has
\be \label{3-41} \ba
& \frac{4 \Phi}{\pi} |\xi| \int_0^1 \left| \frac{d}{dr}(r \hat{\psi} )  \right| |r \hat{\psi} | \, dr \\
\leq  & C \Phi |\xi| \left( \int_0^1 |\mathcal{L} \hat{\psi} |^2 r \, dr    \right)^{\frac16} \left( \int_0^1 \frac{1-r^2}{r} \left|
  \frac{d}{dr}( r \hat{\psi} ) \right|^2 \, dr       \right)^{\frac13} \\
& \ \ \ \ \ \ \ \ \left( \int_0^1 \left| \frac{d}{dr}(r \hat{\psi} ) \right|^2 \frac1r \, dr     \right)^{\frac16} \left(  \int_0^1 (1  - r^2) |\hat{\psi} |^2 r \, dr         \right)^{\frac13} \\
\leq & \frac12 \int_0^1 |\mathcal{L} \hat{\psi} |^2 r\, dr + \xi^2 \int_0^1 \left| \frac{d}{dr}(r \hat{\psi})     \right|^2 \frac1r \, dr
+ C \Phi^{\frac32} \int_0^1 \frac{1-r^2}{r} \left|\frac{d}{dr} (r \hat{\psi})  \right|^2 \, dr \\
&\ \  \ + C \Phi^{\frac32} \xi^2 \int_0^1 (1  - r^2) |\hat{\psi} |^2 r \, dr \\
\leq & \frac12 \int_0^1 |\mathcal{L} \hat{\psi} |^2 r\, dr + \xi^2 \int_0^1 \left| \frac{d}{dr}(r \hat{\psi})     \right|^2 \frac1r \, dr
+ C \Phi^{\frac12} |\xi|^{-1} \left| \int_0^1 \hat{f} \overline{\hat{\psi} } r \, dr    \right| .
\ea
\ee
Substituting \eqref{3-41} into \eqref{3-9} gives
\be \label{3-42} \ba
& \int_0^1 | \mathcal{L} \hat{\psi} |^2 r \, dr + 2 \xi^2 \int_0^1 \left|\frac{d}{dr}(r \hat{\psi})  \right|^2 \frac1r \, dr + \xi^4 \int_0^1 |\hat{\psi}|^2 r \, dr \\
\leq & C \int_0^1 |\widehat{F^r} | |\xi \hat{\psi} | r \, dr + C \int_0^1 | \widehat{F^z} | \left| \frac{d}{dr}(r \hat{\psi}) \right| \, dr .
\ea
\ee
This, together with Young's inequality, gives \eqref{highf1}.

Taking \eqref{highf1} into \eqref{3-7} gives \eqref{highf2} and hence the proof of Proposition \ref{Bpropcase2} is completed.
\end{proof}

Let
\be \nonumber
\chi_2 (\xi ) = \left\{ \ba & 1 , \ \ \ |\xi| \geq \epsilon_1 \sqrt{\Phi}  , \\
& 0,\ \ \ \ \text{otherwise},  \ea  \right.
\ee
and $\psi_{high}$ be the function  such that
$
\what{\psi_{high}}  =  \chi_2 (\xi) \hat{\psi}.
$
Define
\be \nonumber
v^r_{high} = \partial_z \psi_{high},\ \ \ \ v^z_{high} = - \frac{\partial_r ( r \psi_{high} )}{r}, \ \ \ \  \text{and}\ \ \ \ \Bv^*_{high} = v^r_{high}\Be_r + v^z_{high}\Be_z.
\ee
Similarly, one can define $F^r_{high}, F^z_{high}, \BF^*_{high}$, and $\Bo_{high}^\theta $.

\begin{pro}\label{Bpropcase2-1} The solution $\Bv^*$  satisfies
\be \label{3-45}
\|\Bv^*_{high} \|_{H^{\frac53} (\Omega) }
\leq C (\epsilon_1) \|\BF^*_{high} \|_{L^2(\Omega) } ,\ \ \ \ \|\Bv^*_{high} \|_{H^2(\Omega)} \leq C(\epsilon_1)  \Phi^{\frac14}  \|\BF^*_{high} \|_{L^2(\Omega)},
\ee
and
\be \label{3-46}
\|v^r_{high} \|_{L^2(\Omega)} \leq C(\epsilon_1) \Phi^{-1} \|\BF^*_{high}\|_{L^2(\Omega)}, \ \ \ \ \|\partial_z v^z_{high} \|_{L^2(\Omega)}
\leq C(\epsilon_1) \Phi^{-\frac12} \|\BF^*_{high}\|_{L^2(\Omega)},
\ee
where $C$ is a uniform constant independent of $\Phi$, $\alpha$, and $\BF$.
\end{pro}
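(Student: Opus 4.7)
The plan is to mirror the strategy of Proposition \ref{Bpropcase1-1}: first read off the improved $L^2$ bounds \eqref{3-46} directly from the Fourier estimate \eqref{highf1}, and then feed these into Stokes regularity to obtain the Sobolev bounds \eqref{3-45}. The new feature is that the high-frequency cut-off $|\xi|\ge \epsilon_1\sqrt{\Phi}$ allows every factor of $|\xi|^{-1}$ in \eqref{highf1} to be cashed in for a factor of $\Phi^{-1/2}$, so one must keep track of how many such factors are available before taking Fourier inverse.

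\emph{Step 1: the improved $L^2$ estimates.} From \eqref{highf1} one has in particular $\int_0^1|\hat{\psi}|^2r\,dr\le C|\xi|^{-6}\int_0^1|\what{\BF^*}|^2 r\,dr$ and $\int_0^1|\frac{d}{dr}(r\hat{\psi})|^2 r^{-1}\,dr\le C|\xi|^{-4}\int_0^1|\what{\BF^*}|^2 r\,dr$. Since $\widehat{v^r_{high}}=i\xi \widehat{\psi_{high}}$ and $\widehat{\partial_z v^z_{high}}=-i\xi\,r^{-1}\frac{d}{dr}(r\widehat{\psi_{high}})$, Parseval gives
\begin{equation*}
\|v^r_{high}\|_{L^2}^2\le C\int_{|\xi|\ge\epsilon_1\sqrt{\Phi}}|\xi|^{-4}\|\what{\BF^*}\|^2\,d\xi\le C\Phi^{-2}\|\BF^*_{high}\|_{L^2}^2,\quad \|\partial_z v^z_{high}\|_{L^2}^2\le C\int_{|\xi|\ge\epsilon_1\sqrt{\Phi}}|\xi|^{-2}\|\what{\BF^*}\|^2\,d\xi\le C\Phi^{-1}\|\BF^*_{high}\|_{L^2}^2,
\end{equation*}
which is exactly \eqref{3-46}.

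\emph{Step 2: $H^1$ bound and Stokes regularity.} The vorticity $\widehat{\omega^\theta}=(\mathcal{L}-\xi^2)\hat{\psi}$ obeys $|\widehat{\omega^\theta}|^2\le 2(|\mathcal{L}\hat{\psi}|^2+\xi^4|\hat{\psi}|^2)$, so \eqref{highf1} immediately yields $\|\omega^\theta_{high}\|_{L^2}^2\le C\Phi^{-1}\|\BF^*_{high}\|_{L^2}^2$. Reproducing the identity used to obtain \eqref{3-30} then gives the intermediate $H^1$ bound $\|\Bv^*_{high}\|_{H^1}\le C\Phi^{-1/2}\|\BF^*_{high}\|_{L^2}$. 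View $\Bv^*_{high}$ as a solution of the Stokes system
\begin{equation*}
-\Delta\Bv^*_{high}+\nabla P=\BF^*_{high}-\bar{U}\partial_z\Bv^*_{high}-v^r_{high}\partial_r\bBU,\qquad \div\,\Bv^*_{high}=0,
\end{equation*}
with Navier boundary data; \cite[Lemma VI.1.2]{Galdi} controls $\|\Bv^*_{high}\|_{H^2}$ by the $L^2$ norm of the right-hand side, by $\|\Bv^*_{high}\|_{H^1}$, and by the $H^{3/2}$ trace on $\partial\Omega$. Step 1 handles $\Phi\|v^r_{high}\|_{L^2}$ uniformly, while multiplying \eqref{highf1} by $\xi^2$ gives $\|\partial_z\Bv^*_{high}\|_{H^1}\le C\|\BF^*_{high}\|_{L^2}$, controlling both the trace term and $\Phi\|\partial_z\Bv^*_{high}\|_{L^2}\le C\Phi^{1/2}\|\BF^*_{high}\|_{L^2}$.

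\emph{Step 3: sharpening to $H^{5/3}$ and $\Phi^{1/4}$, and the main obstacle.} The Step 2 argument alone only yields $\|\Bv^*_{high}\|_{H^2}\le C\Phi^{1/2}\|\BF^*_{high}\|$. To sharpen this, I first prove the uniform bound $\|\Bv^*_{high}\|_{H^{5/3}}\le C\|\BF^*_{high}\|_{L^2}$ by running the fractional analogue of Step 2: the $z$-direction contribution is bounded using $\int |\xi|^{10/3}|\hat{v}|^2\,r\,dr\,d\xi\le C\int|\xi|^{-2/3}\|\what{\BF^*}\|^2 d\xi\le C\Phi^{-1/3}\|\BF^*_{high}\|^2$, and the transverse contribution by recasting the Stokes estimate in the $H^{-1/3}\to H^{5/3}$ scale so that the bad factor $\Phi\|\partial_z\Bv^*_{high}\|_{L^2}$ gets replaced by $\Phi\|\partial_z\Bv^*_{high}\|_{H^{-1/3}}$, which \eqref{highf1} bounds uniformly in $\Phi$. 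Interpolation $H^2=[H^{5/3},H^{7/3}]_{1/2}$ combined with the Step 2 bound $\|\Bv^*_{high}\|_{H^{7/3}}\le C\Phi^{1/2}\|\BF^*_{high}\|$ (again obtained by weighting \eqref{highf1} with $|\xi|^{2/3}$ in place of $\xi^2$) yields the desired $\|\Bv^*_{high}\|_{H^2}\le C\Phi^{1/4}\|\BF^*_{high}\|$. The hard step is the uniform $H^{5/3}$ bound: the convective term $\bar{U}\partial_z\Bv^*_{high}$ naively costs a factor of $\Phi$, and although the high-frequency decay in \eqref{highf1} buys back $\Phi^{1/2}$, squeezing out the remaining $\Phi^{1/2}$ requires pairing the $r$-direction Stokes regularity with the negative-order axial estimate without losing uniformity in $\alpha$, since the identity \eqref{3-30} relating $\|\nabla\Bv\|_{L^2}^2$ to $\|\omega^\theta\|_{L^2}^2$ involves $\alpha$-dependent boundary contributions.
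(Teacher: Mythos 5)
Your Steps 1 and 2 are essentially correct and match the paper: from \eqref{highf1} you correctly read off the improved $L^2$ bounds \eqref{3-46}, the vorticity bound $\|\Bo^\theta_{high}\|_{L^2}\leq C\Phi^{-1/2}\|\BF^*_{high}\|_{L^2}$, and hence the intermediate $H^1$ estimate $\|\Bv^*_{high}\|_{H^1}\leq C\Phi^{-1/2}\|\BF^*_{high}\|_{L^2}$, and you correctly set up the Stokes regularity framework. Your observation that the naive bound $\|\bar{U}\partial_z\Bv^*_{high}\|_{L^2}\leq\Phi\|\partial_z\Bv^*_{high}\|_{L^2}$ only buys $\Phi^{1/2}$ is also accurate, and you honestly flag that closing the gap down to $\Phi^{1/4}$ is the crux.

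However, your Step~3 does not actually close that gap, and this is where the proposal diverges from a complete proof. The paper's device is simpler and sidesteps all of the fractional Stokes machinery you invoke: instead of throwing away the full $\Phi$ via $\bar{U}\leq C\Phi$, one writes $\bar{U}^2\leq C\Phi\,\bar{U}$ and invokes the \emph{weighted} estimate \eqref{highf2}, which retains $\bar{U}(r)$ in the integrand. Concretely,
\be \nonumber
\|\bar{U}\partial_z\Bv^*_{high}\|_{L^2(\Omega)}^2 \leq C\Phi\int_{|\xi|\geq\epsilon_1\sqrt{\Phi}}\left[\xi^2\int_0^1\frac{\bar{U}(r)}{r}\left|\frac{\partial}{\partial r}(r\hat{\psi})\right|^2 dr + \xi^4\int_0^1\bar{U}(r)|\hat{\psi}|^2 r\,dr\right]d\xi \leq C\Phi\int_{|\xi|\geq\epsilon_1\sqrt{\Phi}}|\xi|^{-1}\int_0^1|\what{\BF^*}|^2 r\,dr\,d\xi \leq C\Phi^{1/2}\|\BF^*_{high}\|_{L^2(\Omega)}^2,
\ee
which gives the $\Phi^{1/4}$ factor directly; then the uniform $H^{5/3}$ bound is a one-line interpolation $\Phi^{(-1/2)\cdot(1/3)}\Phi^{(1/4)\cdot(2/3)}=1$ between the $H^1$ estimate ($\Phi^{-1/2}$) and this $H^2$ estimate ($\Phi^{1/4}$). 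Your route reverses the order (try to prove $H^{5/3}$ uniform first, then interpolate to $H^2$), and relies on two unestablished claims: a uniform ``$H^{-1/3}\to H^{5/3}$'' Stokes estimate for this boundary-value problem, and an $H^{7/3}\leq C\Phi^{1/2}$ bound ``by weighting \eqref{highf1} with $|\xi|^{2/3}$,'' which only controls extra fractional $z$-derivatives, not a full additional $1/3$ transverse derivative. Neither is substantiated, and neither is needed once you notice that \eqref{highf2}, which you never used, is precisely the tool that recovers the missing factor of $|\xi|^{-1}\sim\Phi^{-1/2}$.
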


\begin{proof}
 $\Bv^*_{high}$ is a strong solution to the following Stokes equations,
\be \label{stokes-high} \left\{  \ba
& - \Delta \Bv^*_{high} + \nabla P = \BF^*_{high} - \bar{U} \partial_z \Bv^*_{high} - v^r_{high} \partial_r \bBU \ \ \ \mbox{in}\ \Omega, \\
& {\rm div}~\Bv^*_{high} = 0\ \ \ \ \mbox{in}\ \Omega.
\ea \right.
\ee
According to the regularity theory for Stokes equations (\hspace{1sp}\cite[Lemma VI.1.2]{Galdi}), one has
\be \label{3-47} \ba
\|\Bv^*_{high}\|_{H^2(\Omega)}
\leq &  C\Big( \|\BF^*_{high}\|_{L^2(\Omega)}
+   \| \bar{U}(r)  \partial_z \Bv^*_{high} \|_{L^2(\Omega)} +  \Phi \|v^r_{high}\|_{L^2(\Omega)}\\
&\ \ \ \  +  \|\Bv^*_{high}\|_{H^1(\Omega)} +  \|\partial_z \Bv^*_{high} \|_{H^1(\Omega)}\Big).
\ea
\ee
It follows from \eqref{highf1} and \eqref{highf2} that
\be \label{3-48} \ba
&  \| \bar{U}(r)  \partial_z \Bv^*_{high}\|_{L^2(\Omega)}  \\
\leq & C \left\{ \int_{|\xi| \geq \epsilon_1 \sqrt{\Phi}  }  \Phi  \left[ \xi^4 \int_0^1 \bar{U}(r) |\hat{\psi}|^2 r \, dr + \xi^2 \int_0^1 \frac{\bar{U}(r)}{r} \left|  \frac{\partial }{\partial r} ( r \hat{\psi})  \right|^2 \, dr   \right]  \, d\xi   \right\}^{\frac12} \\
\leq & C \left( \int_{|\xi| \geq \epsilon_1 \sqrt{\Phi}   } \Phi |\xi|^{-1} \int_0^1 |\hat{\BF}|^2 r \, dr d\xi  \right)^{\frac12}\\
\leq & C \Phi^{\frac14} \|\BF^*_{high}\|_{L^2(\Omega)},
\ea
\ee
\be \label{3-49} \ba
\Phi \|v^r_{high}\|_{L^2(\Omega)}&  \leq C \Phi \left( \int_{|\xi| \geq \epsilon_1 \sqrt{\Phi} } \xi^2 \int_0^1 |\hat{\psi} |^2 r \, dr d\xi   \right)^{\frac12} \leq C   \|\BF^*_{high}\|_{L^2(\Omega)},
\ea
\ee
and
\be \label{3-50}
\| v^z_{high}  \|_{ L^2 (\Omega)} \leq C \left(  \int_{ |\xi| \geq \epsilon_1 \sqrt{\Phi} }   \int_0^1 \left| \frac{\partial }{\partial r} ( r\hat{\psi})  \right|^2 \frac1r \, dr d\xi  \right)^{\frac12}  \leq C (\epsilon_1) \Phi^{-1}  \| \BF^*_{high}\|_{L^2(\Omega)}.
\ee
Furthermore, it holds that
\be \label{3-50-1} \ba
\|  \Bv^*_{high} \|_{H^1 (\Omega) } & \leq  \| \Bo^\theta_{high}\|_{L^2(\Omega)}  +  \|\Bv^*_{high} \|_{L^2(\Omega)} \\
  &  \leq C \left( \int_{|\xi| \geq \epsilon_1 \sqrt{\Phi} } \int_0^1 | (\mathcal{L} - \xi^2) \hat{\psi} |^2 r \, dr d\xi         \right)^{\frac12} + C \Phi^{-1} \|\BF^*_{high} \|_{L^2(\Omega)}   \\
& \leq C(\epsilon_1) \Phi^{-\frac12} \|\BF^*_{high} \|_{L^2(\Omega)}
\ea
\ee
and
\be \label{3-50-2} \ba
\|  \partial_z \Bv^*_{high} \|_{H^1 (\Omega) } & \leq  \| \partial_z \Bo^\theta_{high}\|_{L^2(\Omega)}
  + \|\partial_z \Bv^*_{high} \|_{L^2(\Omega)} \\
  & \leq C \left( \int_{|\xi| \geq \epsilon_1 \sqrt{\Phi} } \int_0^1 \xi^2  | (\mathcal{L} - \xi^2) \hat{\psi} |^2 r \, dr d\xi
         \right)^{\frac12}  + C \Phi^{-\frac12} \|\BF^*_{high} \|_{L^2(\Omega)} \\
& \leq C(\epsilon_1) \|\BF^*_{high} \|_{L^2(\Omega)} .
\ea \ee
Hence substituting  \eqref{3-48}-\eqref{3-50-2} into \eqref{3-47} gives the second estimate in \eqref{3-45}. And the interpolation between \eqref{3-50-1}  and
the second estimate in \eqref{3-45} gives the first estimate in \eqref{3-45}. Note that \eqref{3-49} is exactly the first estimate in \eqref{3-46}. On the other hand,
 \be \nonumber
 \|\partial_z v^z_{high}\|_{L^2(\Omega)}^2
 \leq C \int_{|\xi| \geq \epsilon_1 \sqrt{\Phi} } \xi^2 \int_0^1 \left| \frac{\partial }{\partial r}(r \hat{\psi}) \right|^2 \frac1r \, dr d\xi \leq C(\epsilon_1) \Phi^{-1} \|\BF^*_{high}\|_{L^2(\Omega)}^2,
 \ee
 which gives the second estimate \eqref{3-46}. This finishes the proof of the proposition.
\end{proof}

\subsection{Uniform estimate for the case with large flux  and  intermediate frequency}\label{sec-intermediate}
In this subsection, we give the uniform estimate for the solutions of \eqref{2-0-8}-\eqref{FBC} with respect to $\Phi$ and $\alpha$,  when the flux is large and the frequency is intermediate. The key ideas of the proof is quite similar to that in \cite{WX-Navier} for the flows periodic in the axial direction where $\xi$ takes only the integers. The major new difficulties comparing with the periodic case appear when we deal with the case with large flux, intermediate frequency, and intermediate slip coefficients.

The first proposition is about the case with large flux, intermediate frequency, and small slip coefficient.
\begin{pro}\label{Bpropcase3} Assume that $\Phi \gg 1$.
There exist two small constant $\epsilon_1 \in (0, 1)$ and $\delta \in (0, 1)$,  such that as long as $\frac{1}{\epsilon_1 \Phi}
\leq |\xi | \leq \epsilon_1 \sqrt{\Phi} $, and $4 +  \alpha \leq \delta (\Phi |\xi |)^{\frac13}$,
the solution $\hat{\psi}  (r)$ to the problem \eqref{2-0-8}-\eqref{FBC} can be decomposed into four parts,
\be \label{case3-1}
\hat{\psi} (r) = \what{\psi_{s} } (r) +  b\left[ \chi \what{\psi_{BL}} (r) + \what{\psi_{e}} (r) \right]  + a I_1(|\xi |r) .
\ee
The properties of these four parts are summarized as follows.

 $(1)$\ $\what{\psi_{s}}  $ is a solution to the following problem
\be \label{slip}
\left\{ \ba   & i \xi \bar{U}(r) ( \mathcal{L} - \xi^2) \what{ \psi_{s } } - (\mathcal{L} - \xi^2)^2 \what{\psi_{s}}  = \hat{f} = - \frac{d}{dr} \what{F^z} + i \xi \what{F^r} , \\
& \what{\psi_{s}}  (0) = \what{\psi_{s} }  (1) = \mathcal{L} \what{\psi_{s}}  (0) = \mathcal{L} \what {\psi_{s}}   (1) = 0.
\ea      \right.
\ee
$\psi_s$ satisfies the estimates
\be \label{case3-3} \ba
& \int_0^1 | \what{\psi_{s }} |^2 r  +
 \left|\frac{d}{dr}(r \what {\psi_{s}} )  \right|^2  \frac{1}{r}
+ \xi^2  \left| \what{\psi_{s }} \right|^2 r \, dr
\leq  C (\Phi |\xi |)^{- 2} (4 + \alpha)^2  \int_0^1 | \what{\BF^* }  |^2 r \, dr ,
\ea
\ee
\be \label{case3-4} \ba
& \int_0^1 | \mathcal{L} \what{\psi_{s }} |^2 r  + \xi^2  \left|  \frac{d}{dr} ( r \what{\psi_{s}} )\right|^2 \frac1r  + \xi^4
 | \what{ \psi_{s} } |^2  r \, dr
 \leq  C ( \Phi |\xi|)^{-1} (4 + \alpha) \int_0^1 |\what{\BF^*}  |^2 r \, dr ,
 \ea
\ee
\be \label{case3-5} \ba
& \int_0^1 \left| \frac{d}{dr}( r \mathcal{L} \what{ \psi_{s} }  )\right|^2 \frac1r  +
\xi^2  |\mathcal{L} \what {\psi_{s} }  |^2 r
+ \xi^4  \left|  \frac{d}{dr}( r \what{\psi_{s}} )       \right|^2 \frac1r  + \xi^6 | \what{\psi_{s}} |^2 r \, dr
\leq   C \int_0^1 | \what{ \BF^* }  |^2 r\, dr ,
\ea
\ee
and
\begin{equation}\label{case3-5-1}
\left| \frac{d}{dr} ( r \what{ \psi_{s} } ) (1) \right| \leq C ( \Phi |\xi|)^{-\frac34} (4 + \alpha)^{\frac34} \left( \int_0^1 | \what{\BF^*} |^2 r \, dr\right)^{\frac12} .
\end{equation}

$(2)$\ $I_1(\rho)$ is the modified Bessel function of the first kind, i.e.,
\be \label{eqBessel1}
\left\{
\ba & \rho^2 \frac{d^2}{d\rho^2} I_1 (\rho ) + \rho \frac{d}{d\rho} I_1 (\rho)  - (\rho^2 + 1) I_1 (\rho ) = 0, \\
& I_1 (0) = 0,\quad I_1(\rho ) >0 \,\,\text{if}\,\, \rho >0.
\ea
\right.
\ee
Furthermore, $a$ is a constant satisfying
\be \label{case3-6}
|a| \leq C (\Phi |\xi |)^{-\frac74} (4+  \alpha)^{\frac{11}{4}}   I_1( |\xi | )^{-1} \left( \int_0^1 | \what{\BF^*} |^2 r \, dr      \right)^{\frac12}.
\ee

$(3)$ \ $ \what{\psi_{BL }}$ is the boundary layer profile,
\be \label{case3-7}
\what{ \psi_{BL} }  (r) = e^{- \sqrt{\beta} \cos \frac{\theta}{2} (1 - r) } e^{-i \sqrt{\beta} \sin \frac{\theta}{2} (1 - r) },
\ee
where $\beta\in (0, +\infty)$ and $\theta$ are defined as follows
\be \label{defbeta}
\beta^2 = \left( \frac{\Phi \xi }{\pi} \right)^2 \left( \frac{4}{4+ \alpha} \right)^2 + \xi^4,\ \ \ \cos \theta = \frac{\xi^2}{\beta}, \ \ \text{and}\ \
\sin \theta = \frac{4\Phi \xi }{\pi (4 + \alpha) \beta}.
\ee
Moreover,  $\chi$ is a  smooth increasing cut-off function satisfying
\be \label{defchi}
\chi (r) = \left\{ \ba  &  1, \ \ \ \ r\geq \frac12,  \\ & 0, \ \ \  \ r \leq \frac14,   \ea  \right.
\ee
and the constant $b$ satisfies
\be \label{case3-8}
|b|\leq C (\Phi |\xi|)^{-\frac74} (4 +  \alpha)^{\frac{11}{4}}   \left( \int_0^1 |\what{ \BF^*} |^2 r \, dr  \right)^{\frac12}.
\ee

$(4)$\ $\what{ \psi_{e} } $ is a remainder term satisfying
\be \label{case3-9}
\left\{  \ba  &  i \xi  \bar{U}(r) ( \mathcal{L} - \xi^2) (\chi \what{\psi_{BL}}  + \what{ \psi_{e} } ) - (\mathcal{L} - \xi^2)^2 (\chi \what{\psi_{BL}} + \what{\psi_{e}} ) = 0,   \\
& \what{ \psi_{e} } (0) = \what{\psi_{e}} (1) = \mathcal{L} \what{\psi_{e}} (0) = \mathcal{L} \what{\psi_{e}} (1) = 0.
\ea  \right.
\ee
 And $ \what{ \psi_{e}}  $ satisfies the following estimates,
\be \label{case3-10}
\int_0^1 \left| \frac{d}{dr}(r \what{\psi_{e}} ) \right|^2 \frac1r  \, dr + \xi^2 \int_0^1 | \what{\psi_{e}} |^2 r \, dr  \leq C (4 +  \alpha)^2 \beta^{- \frac12} ,
\ee
and
\be \label{case3-12} \ba
& \int_0^1 \left| \frac{d}{dr}( r \mathcal{L} \what{\psi_{e}} ) \right|^2 \frac1r  + \xi^2   |\mathcal{L} \what{\psi_{e}} |^2 r +
\xi^4 \left| \frac{d}{dr}( r \what{\psi_{e}} )  \right|^2 \frac1r    + \xi^6  | \what{\psi_{e}} |^2 r\, dr
\leq   C (\Phi |\xi|)^2 \beta^{-\frac12}.
\ea
\ee

In conclusion, $\hat{\psi} $ satisfies
\be \label{case3-13}
\int_0^1 \left| \frac{d}{dr}(r \hat{\psi} )\right|^2 \frac1r \, dr + \xi^2 \int_0^1 | \hat{\psi} |^2  r \, dr
\leq C (\Phi |\xi |)^{-\frac43} \int_0^1 | \what{\BF^*}|^2 r \, dr,
\ee
and
\be \label{case3-15}
\ba
&\int_0^1 \left| \frac{d}{dr}( r \mathcal{L} \hat{\psi} ) \right|^2 \frac1r  + \xi^2   |\mathcal{L} \hat{\psi} |^2 r +
\xi^4  \left| \frac{d}{dr}( r  \hat{\psi} )  \right|^2 \frac1r  + \xi^6 | \hat{\psi} |^2 r\, dr
\leq  C \int_0^1 | \what{\BF^*} |^2  r \, dr .
\ea
\ee
\end{pro}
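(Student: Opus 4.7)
The decomposition \eqref{case3-1} is driven by the observation that in the intermediate-frequency regime $1/(\epsilon_1\Phi) \le |\xi| \le \epsilon_1\sqrt{\Phi}$ the equation \eqref{2-0-8} is genuinely a singularly perturbed fourth-order ODE: the convective term $i\xi\bar U(r)(\mathcal L-\xi^2)\hat\psi$ competes with the viscous term $(\mathcal L-\xi^2)^2\hat\psi$. I would first kill the source $\hat f$ by $\widehat{\psi_s}$, which solves \eqref{slip} with four homogeneous conditions (no $\alpha$); then correct the Navier boundary condition $\mathcal L\hat\psi(1)=-\alpha\hat\psi'(1)$ together with $\hat\psi(1)=0$ by adding two homogeneous modes: the regular one $I_1(|\xi|r)$ (which is annihilated by $\mathcal L-\xi^2$, hence by both operators) and a boundary-layer mode $\chi\widehat{\psi_{BL}}+\widehat{\psi_e}$ concentrated near $r=1$.

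\textbf{Step 1: the slip part $\widehat{\psi_s}$.} Because $\widehat{\psi_s}$ satisfies $\mathcal L\widehat{\psi_s}(0)=\mathcal L\widehat{\psi_s}(1)=0$, the boundary term $\alpha|\frac{d}{dr}(r\hat\psi)(1)|^2$ in the analogue of \eqref{3-5} is absent, so exactly the energy identities \eqref{3-5}--\eqref{3-7} apply but now with $\alpha=0$. Using Lemma~\ref{lemmaHLP} together with $\bar U(r)/r\ge c\Phi/(4+\alpha)$, the imaginary-part identity \eqref{3-7} yields a gain of $(\Phi|\xi|)^{-1}(4+\alpha)$; reinserting this into the real-part identity \eqref{3-5} produces \eqref{case3-3}--\eqref{case3-5} after iterating the weighted Hardy inequality from Lemma~\ref{weightinequality}. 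The trace estimate \eqref{case3-5-1} then follows by interpolating \eqref{case3-4} and \eqref{case3-5}.

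\textbf{Step 2: the boundary layer $\widehat{\psi_{BL}}$ and remainder $\widehat{\psi_e}$.} Freezing coefficients at $r=1$, where $\bar U(1)=\frac{4\Phi}{\pi(4+\alpha)}$, the homogeneous equation reduces to a constant-coefficient fourth-order ODE whose characteristic roots are $\pm|\xi|$ (corresponding to $I_1,K_1$) and $\pm\mu$ with $\mu^2=\xi^2+i\xi\bar U(1)$. Writing $\mu^2=\beta e^{i\theta}$ gives the exponents $\pm\sqrt{\beta}\,e^{i\theta/2}$ used in \eqref{case3-7}. The choice of sign selects the inward-decaying mode, and the cut-off $\chi$ isolates the profile away from the axis where the frozen-coefficient approximation breaks. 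The residual
\[
R := i\xi\bar U(r)(\mathcal L-\xi^2)(\chi\widehat{\psi_{BL}}) - (\mathcal L-\xi^2)^2(\chi\widehat{\psi_{BL}})
\]
splits into (i) a commutator supported on $r\in[1/4,1/2]$, exponentially small in $\sqrt{\beta}$, and (ii) an error from the variation of $\bar U(r)-\bar U(1)=O(1-r)$ on the layer scale $1/\sqrt{\beta}$. Then $\widehat{\psi_e}$ is defined by $i\xi\bar U(r)(\mathcal L-\xi^2)\widehat{\psi_e}-(\mathcal L-\xi^2)^2\widehat{\psi_e}=-R$ with homogeneous boundary data, and the energy method of Step~1, applied with source $R$ whose $L^2(r\,dr)$ norm is $O(\beta^{1/4})$ after summing (i)-(ii), delivers \eqref{case3-10}--\eqref{case3-12}.

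\textbf{Step 3: choosing $a,b$ and assembling.} With $\widehat{\psi_s}+b[\chi\widehat{\psi_{BL}}+\widehat{\psi_e}]+aI_1(|\xi|r)$ already satisfying the three conditions at $r=0$ and $\hat\psi(1)=0$ reducing to $aI_1(|\xi|)+b\widehat{\psi_{BL}}(1)=0$ (since $\chi(1)=1,\widehat{\psi_e}(1)=0,\widehat{\psi_s}(1)=0$), the remaining Navier condition $\mathcal L\hat\psi(1)+\alpha\hat\psi'(1)=0$ gives a $2\times 2$ linear system for $(a,b)$. The dominant entries are $\mathcal L(\chi\widehat{\psi_{BL}})(1)\sim\beta$ and $\mathcal L(I_1(|\xi|\cdot))(1)=0$, so the determinant is of order $\beta\cdot I_1(|\xi|)$. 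The hypothesis $4+\alpha\le\delta(\Phi|\xi|)^{1/3}$ is precisely what makes $\alpha$ subdominant against $\sqrt{\beta}\asymp(\Phi|\xi|)^{1/2}/(4+\alpha)^{1/2}$, ensuring nondegeneracy and producing the bounds \eqref{case3-6} and \eqref{case3-8} after inserting the trace bound \eqref{case3-5-1}. Summing \eqref{case3-3}--\eqref{case3-5} for $\widehat{\psi_s}$, the explicit Bessel estimates for $aI_1$, and \eqref{case3-10}--\eqref{case3-12} for $b(\chi\widehat{\psi_{BL}}+\widehat{\psi_e})$ then yields \eqref{case3-13} and \eqref{case3-15}.

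\textbf{Main obstacle.} The delicate point is calibrating the boundary-layer scale $\sqrt{\beta}$ against the slip parameter $\alpha$: one must track how $\alpha\hat\psi'(1)$ interacts with $\mathcal L\widehat{\psi_{BL}}(1)$ and with $|\xi|I_1'(|\xi|)$ to confirm the $2\times 2$ linear system is invertible with a quantitative lower bound on its determinant, and this is exactly where the threshold $(\Phi|\xi|)^{1/3}$ enters. Controlling the variable-coefficient remainder $\widehat{\psi_e}$ without losing the gain $\beta^{-1/4}$ is the second technical point, requiring careful use of both \eqref{3-5} and \eqref{3-7} on the inhomogeneous problem rather than treating the two identities separately.
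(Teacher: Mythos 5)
Your proposal follows the paper's four-step strategy exactly: (1) a slip solution $\what{\psi_s}$ absorbing the source $\hat f$ via energy estimates under the homogeneous slip boundary conditions, (2) a frozen-coefficient boundary layer $\what{\psi_{BL}}$ built from the constant-coefficient equation with $\bar U(1)=\frac{4\Phi}{\pi(4+\alpha)}$, (3) a variable-coefficient remainder $\what{\psi_e}$ solving \eqref{case3-9}, and (4) the regular homogeneous mode $aI_1(|\xi|r)$ with $a,b$ fixed by the two remaining boundary conditions at $r=1$. One small slip to fix: $\mathcal L\big(I_1(|\xi|\cdot)\big)(1)=\xi^2 I_1(|\xi|)$, not $0$ (as you yourself noted, $I_1(|\xi|r)$ is annihilated by $\mathcal L-\xi^2$, not by $\mathcal L$); this does not change your conclusion that the determinant of the $2\times2$ system is $\sim \beta\, I_1(|\xi|)$, since $\xi^2$ and $\alpha|\xi|$ are subdominant to $\beta\asymp \Phi|\xi|/(4+\alpha)$ under the hypothesis $4+\alpha\le\delta(\Phi|\xi|)^{1/3}$ and $|\xi|\le\epsilon_1\sqrt\Phi$.
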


\begin{proof}
	The proof of the proposition is almost same to that for \cite[Proposition 3.6]{WX-Navier}, so we give only the sketch here and refer to \cite{WX-Navier} for the details.
	
	 {\bf Step 1.}\ Find a solution $\what{\psi_s}$ to the problem \eqref{slip}. $\what{\psi_s}$ shares the same equation as that for $\hat{\psi}$. The boundary condition is different. In fact, one can get some good estimates for $\what{\psi_s}$ by classical energy estimate, due to the slip boundary condition.

{\bf Step 2.}\ We construct a boundary layer profile $\what{\psi_{BL}}$, which is defined by \eqref{case3-7} and satisfies the following equation
\be \nonumber
\left( i\frac{4}{4 + \alpha} \frac{\Phi \xi}{\pi}  - \frac{d^2}{dr^2} + \xi^2 \right) \left( \frac{d^2}{dr^2} - \xi^2  \right) \what{\psi_{BL}} = 0.
\ee
{Here $\frac{4}{4+ \alpha}\frac{\Phi}{\pi} $ in the first term is an approximation of $\bar{U}(r)$ near the boundary $r=1$. }

{\bf Step 3.}\ Since
\be \nonumber
i\xi \bar{U} (r) (\mathcal{L} - \xi^2) \what{\psi_{BL}} - (\mathcal{L} -\xi^2)^2 \what{\psi_{BL}} \neq 0,
\ee
we construct a remainder term $\what{\psi_e}$, which satisfies \eqref{case3-9}. Now $\what{\psi_s} + b(\chi \what{\psi_{BL}} + \what{\psi_e} )$ is a solution to the equation \eqref{2-0-8}.

{\bf Step 4.}\ $\what{\psi_s} + b(\chi \what{\psi_{BL}} + \what{\psi_e} )$ does not satisfy the boundary condition \eqref{FBC} yet. So we introduce the modified Bessel function $I_1(\rho)$, which is defined by \eqref{eqBessel1}. Then $I_1(|\xi| r)$ satisfies that $(\mathcal{L} -\xi^2) I_1 (|\xi| r) = 0$ and hence $I_1(|\xi| r)$ satisfies \eqref{2-0-8}. Choose some constants $a$ and $b$, such that $\hat{\psi} = \what{\psi_s} + b(\chi \what{\psi_{BL}} + \what{\psi_e} )+ aI_1(|\xi| r)$ is the solution to \eqref{2-0-8}-\eqref{FBC}.   At last, we do some estimates for the constants $a$, $b$, and the solution $\hat{\psi}$.
\end{proof}

The next proposition is about the  case with large flux, intermediate frequency, and large slip coefficient.
\begin{pro} \label{case4}
Assume that $\Phi \gg 1$. There exist two small independent positive constants $\epsilon_1$ and  $\delta$, such that as long as
$\frac{1}{\epsilon_1 \Phi} \leq |\xi | \leq \epsilon_1 \sqrt{\Phi}$ and $4 + \alpha\geq \delta^{-1} (\Phi |\xi |)^{\frac13}>5$, the solution $\hat{\psi} (r)$ to the problem \eqref{2-0-8}-\eqref{FBC} can be decomposed into four parts,
\be \nonumber
\what{\psi} (r) = \what{ \psi_{s}} (r) + b \left[ \chi \what{\psi_{BL}}(r) + \what{\psi_{e}} (r)    \right] + a I_1 (|\xi | r).
\ee
Here $(1)$\ $ \what{\psi_{s} }$ is a solution to the problem \eqref{slip} satisfying
\be \label{propslip1-large-6}
\int_0^1 \left|\frac{d}{dr}(r  \what{\psi_{s} } ) \right|^2  \frac{1}{r}\, dr
+ \xi^2 \int_0^1 \left| \what{ \psi_{s} }  \right|^2 r \, dr
\leq C (\Phi |\xi |)^{- \frac43 } \int_0^1 | \what{ \BF^* }  |^2 r \, dr ,
\ee
\be \label{propslip1-large-7}
\begin{aligned}
&\int_0^1 | \mathcal{L} \what{ \psi_{s} } |^2 r \, dr + \xi^2 \int_0^1 \left|  \frac{d}{dr} ( r \what {\psi_{s} }  )\right|^2 \frac1r \, dr + \xi^4
\int_0^1 | \what{ \psi_{s} } |^2  r \, dr\\
 \leq & C ( \Phi |\xi |)^{-\frac23} \int_0^1 | \what{\BF^* }|^2 r \, dr ,
\end{aligned}
\ee
\be \label{propslip1-large-8} \ba
& \int_0^1 \left| \frac{d}{dr}( r \mathcal{L} \what{\psi_{s} }  )\right|^2 \frac1r  +
\xi^2  |\mathcal{L} \what{ \psi_{s} }  |^2 r  + \xi^4  \left|  \frac{d}{dr}( r \what{\psi_{s}} )       \right|^2 \frac1r    + \xi^6  | \what{\psi_{s} }  |^2 r \, dr
\leq   C \int_0^1 | \what{\BF^*} |^2 r\, dr ,
\ea
\ee
and
\be \label{propslip1-large-9}
\left| \frac{d}{dr} \what{\psi_{s} }  (1) \right| \leq C (\Phi |\xi |)^{-\frac12} \left( \int_0^1 |\what{ \BF^* } |^2 r \, dr \right)^{\frac12}.
\ee

$(2)$\ $I_1(\rho)$ is the modified Bessel function of the first kind as in Proposition \ref{Bpropcase3}, and $a$ is a constant satisfying
\be \nonumber
|a| \leq C (\Phi |\xi |)^{-\frac56} I_1(|\xi |)^{-1} \left(   \int_0^1 | \what{\BF^* }|^2 r \, dr  \right)^{\frac12}.
\ee

$(3)$\ $\what{\psi_{BL}}$ is the boundary layer profile,
\be \label{case4-7}
\what{ \psi_{BL}} (r)  = G_{\xi, \Phi} ( |\beta| (1 - r)) \quad \text{with}\quad |\beta|=\left(\frac{4\Phi |\xi | }{\pi}\right)^{\frac13},
\ee
where $G_{\xi,  \Phi}(\rho) $ is a smooth function which decays exponentially at infinity and is uniformly bounded in the set
\be \nonumber
\mcE = \left\{ ( \xi, \Phi, \rho):\ \Phi \geq 1, \ \frac{1}{\Phi} \leq  |\xi | \leq \sqrt{\Phi},\  0 \leq \rho < + \infty   \right\}.
\ee
Moreover, $\chi$ is the smooth function satisfying \eqref{defchi} and $b$ is a constant satisfying
\be \nonumber
|b| \leq C (\Phi |\xi |)^{-\frac56}  \left(   \int_0^1 |\what{ \BF^* } |^2 r \, dr  \right)^{\frac12}.
\ee

$(4)$\ $ \what{ \psi_{e} } $ is a remainder term satisfying \eqref{case3-9} and the following estimate
\be \label{case4-10}
\int_0^1 \left| \frac{d}{dr}(r \what{ \psi_{e} } ) \right|^2 \frac1r \, dr + \xi^2 \int_0^1 | \what{\psi_{e}} |^2 r \, dr \leq C (\Phi |\xi |)^{\frac13} ,
\ee
and
\be \label{case4-12} \ba
& \int_0^1 \left| \frac{d}{dr} ( r \mathcal{L} \what{ \psi_{e} } ) \right|^2 \frac1r  + \xi^2  |\mathcal{L} \what{\psi_{e} } |^2 r  + \xi^4  \left| \frac{d}{dr} (r  \what{ \psi_{e} } )  \right|^2 \frac1r
 + \xi^6 | \what{ \psi_{e} } |^2 r \, dr
\leq C (\Phi |\xi | )^{\frac53}.
\ea
\ee

In conclusion, $\what{ \psi } $ satisfies
\be \label{case4-13}
\int_0^1 \left|\frac{d}{dr}(r \what{ \psi}  ) \right|^2 \frac1r \, dr + \xi^2 \int_0^1 | \what{ \psi } |^2 r \, dr \leq C (\Phi |\xi| )^{-\frac43} \int_0^1 | \what{\BF^*} |^2 r \, dr,
\ee
and
\be \label{case4-15}
\ba
&\int_0^1 \left| \frac{d}{dr}( r \mathcal{L} \what{ \psi}  ) \right|^2 \frac1r  + \xi^2   |\mathcal{L} \what{ \psi} |^2 r +
\xi^4  \left| \frac{d}{dr}( r  \what{ \psi } )  \right|^2 \frac1r  + \xi^6  | \what{ \psi}  |^2 r\, dr
\leq & C \int_0^1 | \what{ \BF^* } |^2  r \, dr .
\ea
\ee
\end{pro}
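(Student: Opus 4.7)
The plan is to adapt the four-step framework used for Proposition \ref{Bpropcase3}, with the crucial modification that the boundary layer profile must now be built from an Airy-type ODE rather than from the shifted exponential \eqref{case3-7}. The dichotomy between the two regimes is controlled by whether $\bar U(1) = \frac{4}{4+\alpha}\frac{\Phi}{\pi}$ or $\bar U'(1)(r-1) = -\frac{4\alpha}{4+\alpha}\frac{\Phi}{\pi}(r-1)$ dominates the advection coefficient across a layer of thickness $|\beta|^{-1}$ near $r=1$. In the present regime $4+\alpha \geq \delta^{-1}(\Phi|\xi|)^{1/3}$, one has $\bar U(1) \ll |\bar U'(1)|\cdot|\beta|^{-1}$, so the linear term drives the boundary layer and forces the Airy scaling $|\beta|=(4\Phi|\xi|/\pi)^{1/3}$ in \eqref{case4-7}. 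Heuristically, large $\alpha$ makes the Navier condition close to Dirichlet for $\hat\psi'$, which is precisely the no-slip regime where Airy boundary layers appear.

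\textbf{Step 1 (slip corrector).} I would construct $\what{\psi_s}$ as the unique solution to \eqref{slip} by the energy method already used in Proposition \ref{smallflux}. Because $\mathcal{L}\what{\psi_s}(1)=0$, all boundary contributions in the energy identity vanish and one recovers the analogues of \eqref{3-5}--\eqref{3-8}; combining these with Lemma \ref{lemmaHLP} and interpolation in $|\xi|$ yields \eqref{propslip1-large-6}--\eqref{propslip1-large-8}, while \eqref{propslip1-large-9} follows from a one-dimensional trace inequality applied to $\partial_r\what{\psi_s}$ together with \eqref{propslip1-large-7}--\eqref{propslip1-large-8}. The exponents $-4/3$ and $-2/3$ differ from \eqref{case3-3}--\eqref{case3-4} because no $\alpha$-dependent boundary term contaminates the $L^2$-energy here.

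\textbf{Step 2 (Airy boundary layer and remainder).} After linearizing $\bar U(r)$ at $r=1$ and rescaling $\rho=|\beta|(1-r)$, the homogeneous version of \eqref{2-0-8} reduces at leading order to an Airy-type fourth-order ODE of the form $(-\partial_\rho^2 - i\rho)\partial_\rho^2 G_{\xi,\Phi}=0$, the lower-order terms being absorbed as a small perturbation uniform on $\mcE$. Selecting the solution that decays as $\rho\to\infty$ and normalizing it at $\rho=0$ produces a function $G_{\xi,\Phi}$ whose derivatives are uniformly bounded on $\mcE$ and whose decay beats all algebraic rates. Next I would define $\what{\psi_e}$ as the solution to \eqref{case3-9}; its source is the residual
\[
-i\xi\bar U(\mathcal{L}-\xi^2)(\chi\what{\psi_{BL}}) + (\mathcal{L}-\xi^2)^2(\chi\what{\psi_{BL}}),
\]
which comes from three errors: $\mathcal{L}$ versus $\partial_r^2$, $\bar U(r)$ versus its linearization at $r=1$, and the commutators with $\chi$. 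Each is controlled thanks to the Airy decay of $G_{\xi,\Phi}$ and the fact that $\chi'$ is supported where $1-r\sim 1$, giving \eqref{case4-10}--\eqref{case4-12} via the same energy scheme as Step 1.

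\textbf{Step 3 (matching: the main obstacle).} The constants $a,b$ are fixed so that $\hat\psi = \what{\psi_s} + b(\chi\what{\psi_{BL}} + \what{\psi_e}) + aI_1(|\xi|r)$ meets both conditions $\hat\psi(1)=0$ and $\mathcal{L}\hat\psi(1)+\alpha\hat\psi'(1)=0$. This yields a $2\times 2$ linear system for $(a,b)$ with coefficients built from the boundary values of $I_1(|\xi|\cdot)$ and $\chi\what{\psi_{BL}}+\what{\psi_e}$ together with their $\mathcal{L}$- and $\partial_r$-traces at $r=1$. The main obstacle, and where this regime genuinely departs from Proposition \ref{Bpropcase3}, is showing that the determinant of this system is bounded below by a definite constant times $\alpha|\beta|I_1(|\xi|)$; in the small-$\alpha$ case the $\mathcal{L}$-boundary condition dominates, whereas here the $\alpha\hat\psi'(1)$ term does, which is why the scalings on $|a|$ and $|b|$ carry a single power of $(\Phi|\xi|)^{-5/6}$ rather than the fractional powers in \eqref{case3-6} and \eqref{case3-8}. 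This requires careful Airy asymptotics to track $G_{\xi,\Phi}(0)$, $G_{\xi,\Phi}'(0)$ and the interplay of their signs with $I_1(|\xi|)$, $I_1'(|\xi|)$ uniformly on $\mcE$. Once the system is inverted, the aggregate estimates \eqref{case4-13}--\eqref{case4-15} follow by assembling the bounds from Steps 1--2 with those on $a,b$, using the uniform ratio $I_1(|\xi|r)/I_1(|\xi|)$ for $0\le r\le 1$ to handle the Bessel piece.
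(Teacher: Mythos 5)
Your proposal captures exactly the paper's approach: the paper itself only sketches the proof, noting that the decomposition and four steps of Proposition \ref{Bpropcase3} carry over with the single key change that $\what{\psi_{BL}}$ now solves the Airy-type approximate equation obtained by linearizing $\bar U$ at $r=1$ (since $\bar U(1)=\frac{4}{4+\alpha}\frac{\Phi}{\pi}$ is negligible relative to $|\bar U'(1)|\,|\beta|^{-1}$ in this regime), and referring to the periodic case in \cite{WX-Navier} for details. Your identification of the Airy scaling $|\beta|=(4\Phi|\xi|/\pi)^{1/3}$, the three sources of the remainder $\what{\psi_e}$, and the $2\times2$ matching system whose lower-bounded determinant produces the $(\Phi|\xi|)^{-5/6}$ scalings for $a,b$ are all consistent with that sketch and with the displayed approximate equation, so your roadmap is essentially the paper's.
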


The key idea for the proof of Proposition \ref{case4} is quite similar to that for Proposition \ref{Bpropcase3}. The difference lies in the choice of boundary layer profile $\what{\psi_{BL}}$. Here $\what{\psi_{BL}}$ is the solution to the approximate equation
\be \nonumber
\left( i \frac{\Phi \xi}{\pi} 4 (1 - r) - \frac{d^2}{dr^2} + \xi^2 \right) \left( \frac{d^2}{dr^2} - \xi^2 \right) \what{\psi_{BL}} = 0,
\ee
where $\frac{4 \Phi}{\pi} (1 - r)$ in the first term is an approximation of $\bar{U}(r)$ near the boundary $r=1$ when $\alpha$ is large. The detailed proof for Proposition \eqref{case4} is almost the same to that for \cite[Proposition 3.8]{WX-Navier}.

\vspace{5mm}
{Now we are in position to study the case with large flux, intermediate frequency, and intermediate slip coefficient. We can also use the methods developed in Propositions \ref{Bpropcase3} and \ref{case4} to estimate the solutions. However, it seems hard for us to get the same uniform estimates with respect to both $\Phi$ and $\alpha$. As a compromise, we try to prove a uniform estimate the solutions in a less regular space ($H^1$ for $v^r$ and $v^z$). This, together with some better estimate for $v^r$ (see Proposition \ref{medregularity2}), also helps to close the estimate for the nonlinear problem in the next section.
The proof for this case here is more complicated than the case for periodic flows\cite{WX-Navier}, where $|\xi|\geq 1$ as long as the frequency is not zero.}
\begin{pro}\label{case5}
Assume that $\Phi \gg 1$ and $\epsilon_1$, $\delta$ are two independent constants in $(0, 1)$. As long as $\xi$ satisfies
\begin{equation}\label{intermediate}
\frac{1}{\epsilon_1 \Phi} \leq |\xi | \leq \epsilon_1 \sqrt{\Phi}\quad  \text{and} \quad   \delta (\Phi |\xi|)^{\frac13} \leq 4 + \alpha \leq \delta^{-1} (\Phi |\xi|)^{\frac13},
\end{equation}
 the solution $\what{\psi}$ to \eqref{2-0-8} satisfies
\be \label{6-0}
 \int_0^1 |\what{ \psi } |^2 r \, dr \leq C (\Phi |\xi |)^{-\frac53} \int_0^1 |\what{\BF^*}|^2 r \, dr,
\ee
\be \label{6-1}
\int_0^1 \left| \frac{d}{dr}(r \what{ \psi} ) \right|^2 \frac1r \, dr + \xi^2 \int_0^1 |\what{ \psi} |^2 r \, dr \leq C (\Phi |\xi| )^{-\frac43} \int_0^1 |\what{ \BF^* } |^2 r \, dr ,
\ee
\be \label{6-2}
|\xi| \int_0^1  \bar{U}(r) \left| \frac{d}{dr} ( r \what{ \psi} )\right|^2 \frac1r \, dr + |\xi|^3 \int_0^1 \bar{U}(r) |\what{ \psi} |^2 r \, dr \leq C (\Phi |\xi|)^{-\frac23} \int_0^1 |\what{ \BF^*} |^2 r \, dr,
\ee
and
\be \label{6-3} \ba
& \int_0^1 |\mathcal{L} \what{ \psi} |^2 r  + \xi^2  \left|\frac{d}{dr} ( r \what{ \psi} )  \right|^2 \frac1r  + \xi^4  |\what{ \psi} |^2 r \, dr
+  \alpha \left| \frac{d}{dr} ( r \what{ \psi} ) (1) \right|^2 \\
\leq &
C (\Phi |\xi |)^{-\frac12} \int_0^1 |\what{ \BF^*} |^2 r \, dr.
\ea
\ee
\end{pro}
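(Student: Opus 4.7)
My plan is to derive the four estimates without attempting a full boundary-layer decomposition as in Propositions \ref{Bpropcase3} and \ref{case4}. In the intermediate regime \eqref{intermediate}, the boundary-layer thickness $(\Phi|\xi|)^{-1/3}$ is precisely comparable to the Navier transition scale $1/(4+\alpha)$, so neither the ``nearly-slip'' profile of Proposition \ref{Bpropcase3} nor the ``nearly-no-slip'' profile of Proposition \ref{case4} produces clean matching. The basic strategy is to use the two fundamental energy identities \eqref{3-5} and \eqref{3-7} together with Lemmas \ref{lemmaHLP} and \ref{weightinequality} and a careful interpolation that accepts the degradation from the sharper exponent $(\Phi|\xi|)^{-2/3}$ available in the neighboring regimes down to the weaker $(\Phi|\xi|)^{-1/2}$ in \eqref{6-3}.

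First I would extract the weighted bound
\[
\Phi|\xi|\int_0^1 |\hat\psi|^2 r\,dr + \Phi|\xi| \int_0^1 \frac{1-r^2}{r}\left|\frac{d}{dr}(r\hat\psi)\right|^2 dr \le C\|\what{\BF^*}\|_{L^2(r\,dr)}\,\Lambda(\hat\psi)^{1/2},
\]
where $\Lambda(\hat\psi):=\int_0^1 (\xi^2|\hat\psi|^2 r + |\frac{d}{dr}(r\hat\psi)|^2/r)\,dr$, as is already implicit in \eqref{3-8} via Lemma \ref{lemmaHLP} and \eqref{3-7-1}. This will eventually give \eqref{6-2} once the remaining energy quantities are under control. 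The real-part identity \eqref{3-5} produces on the left-hand side all the $H^2$-type quantities of \eqref{6-3}, together with the problematic term
\[
\frac{4\Phi}{\pi}\frac{\alpha}{4+\alpha}|\xi|\int_0^1 \left|\frac{d}{dr}(r\hat\psi)\right| |r\hat\psi|\,dr,
\]
in which $\alpha/(4+\alpha)$ is bounded below in the present regime. My plan is to split this integral into an interior piece on $\{1-r\ge c(\Phi|\xi|)^{-1/3}\}$ and a boundary-layer piece on its complement: the interior piece is absorbed using the weighted $(1-r^2)$-bound above (which gains a factor $(\Phi|\xi|)^{-1/3}$ on the boundary-layer scale), while the boundary-layer piece is handled by a trace/Hardy argument that uses the coercive term $\alpha|\frac{d}{dr}(r\hat\psi)(1)|^2$ already available on the left of \eqref{3-5}, together with $4+\alpha\sim(\Phi|\xi|)^{1/3}$.

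Once this bad term is partly absorbed into the left-hand side and partly dominated by the weighted right-hand side of \eqref{3-7}, one obtains \eqref{6-3} with prefactor $(\Phi|\xi|)^{-1/2}$. Feeding \eqref{6-3} back into the inequality from the first step upgrades it to the sharper bounds \eqref{6-0} and \eqref{6-1}, and \eqref{6-2} then follows directly from \eqref{3-7} combined with \eqref{3-7-1}. The main obstacle is the critical term in Step~2: no closed-form multiplier works in the intermediate regime, and the exponent $-1/2$ in \eqref{6-3} is genuinely weaker than the $-2/3$ available in the two neighboring regimes. Any attempt to recover $-2/3$ here would violate one of the two-sided constraints in \eqref{intermediate}, so the proof must accept the loss and show that the resulting bounds, though weaker than those in Propositions \ref{Bpropcase3} and \ref{case4}, are still strong enough to close the nonlinear argument. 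A further technicality, absent from the periodic case in \cite{WX-Navier}, is that $|\xi|$ is continuous and can be as small as $1/(\epsilon_1\Phi)$, so uniform rearrangements of $\xi$-weights that were automatic when $|\xi|\ge 1$ must now be handled explicitly.
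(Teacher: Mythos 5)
Your proposal misses the key structural observation that makes the intermediate regime easy, and as a consequence it both overcomplicates the argument and reverses the logical order in a way that would not yield the stated exponents. The paper's proof rests on the elementary fact that in the regime \eqref{intermediate} the Poiseuille profile has a \emph{large global lower bound}: since $\bar U(r) = \frac{2\Phi}{\pi}(1-r^2)\frac{\alpha}{4+\alpha} + \frac{4\Phi}{\pi(4+\alpha)} \geq \frac{4\Phi}{\pi(4+\alpha)} \geq \frac{4\delta\Phi}{\pi}(\Phi|\xi|)^{-1/3}$ for \emph{all} $r\in[0,1]$, the imaginary-part identity \eqref{3-7} directly gives
\[
(\Phi|\xi|)^{2/3}\Big(\int_0^1 \big|\tfrac{d}{dr}(r\hat\psi)\big|^2\tfrac1r\,dr + \xi^2\int_0^1|\hat\psi|^2 r\,dr\Big) \leq C(\delta)\left|\int_0^1 \hat f\,\overline{\hat\psi}\,r\,dr\right|,
\]
and Cauchy--Schwarz then yields \eqref{6-1} in one line, with no boundary-layer splitting. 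Then \eqref{6-2} follows by substituting \eqref{6-1} back into \eqref{3-7}, \eqref{6-0} follows from \eqref{6-2} via Lemma \ref{lemmaHLP} and the separate lower bound $\bar U\geq\frac{\Phi}{\pi}(1-r^2)$, and finally the critical term $\frac{4\Phi}{\pi}|\xi|\int_0^1 |\tfrac{d}{dr}(r\hat\psi)||r\hat\psi|\,dr$ in \eqref{3-5} is controlled by plain Cauchy--Schwarz once \eqref{6-0} and \eqref{6-1} are already known, giving \eqref{6-3}. No decomposition of $[0,1]$ into an interior and a boundary layer is needed, and the real-part identity \eqref{3-5} is invoked last, not first.

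Your scheme attempts to prove \eqref{6-3} first via boundary-layer splitting and a trace/Hardy argument for the critical term, and then to ``feed \eqref{6-3} back'' to recover \eqref{6-0}--\eqref{6-1}. This bootstrap goes in the wrong direction: plugging $\Lambda(\hat\psi)\lesssim(\Phi|\xi|)^{-1/2}\|\what{\BF^*}\|^2$ into your first-step weighted inequality gives only $\int_0^1|\hat\psi|^2 r\,dr\lesssim(\Phi|\xi|)^{-5/4}\|\what{\BF^*}\|^2$, strictly weaker than the required $(\Phi|\xi|)^{-5/3}$ in \eqref{6-0}, and your first step does not control the unweighted derivative term in $\Lambda$ at all, only its $(1-r^2)/r$-weighted version. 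Moreover, the proposed absorption of the critical term into the coercive boundary term $\alpha|\tfrac{d}{dr}(r\hat\psi)(1)|^2$ plus the weighted interior bound is left at the level of a plan; it is not evident it closes, and it is not needed once one observes that $\bar U$ is bounded below by $\sim\Phi^{2/3}|\xi|^{-1/3}\gtrsim\Phi^{1/2}$ everywhere. In short, the intermediate regime is the one case where the energy identity \eqref{3-7} alone is already coercive, precisely because $4+\alpha\sim(\Phi|\xi|)^{1/3}$ forces the slip part $\frac{4\Phi}{\pi(4+\alpha)}$ of $\bar U$ to be large; your proposal treats it instead like a hard matching problem, which it is not.
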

\begin{proof}
It follows from \eqref{intermediate} that one has
\be \nonumber
\bar{U}(r) = \frac{2\Phi}{\pi} (1 - r^2) \frac{\alpha}{4+ \alpha} + \frac{\Phi}{\pi} \frac{4}{4+ \alpha} \geq \frac{4\Phi}{\pi} \delta (\Phi |\xi | )^{-\frac13}.
\ee
This, together with \eqref{3-7}, yields
\be \nonumber \ba
& (\Phi |\xi | )^{\frac23} \left( \int_0^1 \left| \frac{d}{dr}(r \what{ \psi} ) \right|^2 \frac1r \, dr + \xi^2 \int_0^1 |\what{ \psi} |^2r \, dr \right) \\
\leq & C (\delta) \left[  \int_0^1 |\what{F^r}| |\xi \what{\psi} | r \, dr + \int_0^1 |\what{ F^z} | \left| \frac{d}{dr}(r \what{ \psi} )    \right| \, dr              \right].
\ea \ee
Applying Schwarz inequality gives \eqref{6-1}.
Taking \eqref{6-1} into \eqref{3-7} yields \eqref{6-2}.

Next, using the property $\bar{U}(r) \geq \frac{\Phi}{\pi} (1- r^2)$, Lemma \ref{lemmaHLP}, and \eqref{6-2} gives
\be \nonumber
\int_0^1 |\what{ \psi} |^2 r \, dr \leq
C\Phi^{-1}  \int_0^1 \frac{\bar{U}(r)}{r} \left| \frac{d}{dr}(r \what{ \psi} )  \right|^2 \, dr  \leq C(\delta) (\Phi |\xi| )^{-\frac53} \int_0^1 |\what{ \BF^*} |^2 r \, dr.
\ee
This is exactly the estimate \eqref{6-0}.

Finally, it follows from \eqref{3-5}, \eqref{6-0}, and  \eqref{6-1} that
\be \label{6-9} \ba
& \int_0^1 |\mathcal{L} \what{ \psi} |^2 r \, dr +  2\xi^2 \int_0^1 \left| \frac{d}{dr}(r \what{ \psi} ) \right|^2 \frac1r \, dr + \xi^4 \int_0^1 |\what{ \psi} |^2 r \, dr \\
\leq & \int_0^1 |\what{ F^r} | |\xi \what{ \psi} | r \, dr + \int_0^1 |\what{ F^z}| \left| \frac{d}{dr}(r \what{ \psi} )  \right| \, dr + \frac{4\Phi |\xi| }{\pi}
\int_0^1 \left| \frac{d}{dr} (r \what{ \psi} )  \right| | r \what{ \psi} | \, dr \\
\leq & C (\Phi |\xi| )^{-\frac23} \int_0^1 |\what{ \BF^*} |^2 r \, dr + C \Phi |\xi | \left( \int_0^1 \left| \frac{d}{dr}(r \what{ \psi} ) \right|^2 \frac1r \, dr \right)^{\frac12} \left(
\int_0^1 |\what{ \psi} |^2 r \, dr \right)^{\frac12}\\
\leq & C (\Phi |\xi|)^{-\frac12} \int_0^1 |\what{ \BF^*} |^2 r \, dr .
\\
\ea \ee
This completes the proof of the proposition.
\end{proof}


Let
\be \nonumber
Z_1 = \left\{ \xi \in \mathbb{R}:\ \frac{1}{\epsilon_1 \Phi}  \leq |\xi| \leq \epsilon_1 \sqrt{\Phi}, \ \ |\xi| \geq \delta^{-3} (4 + \alpha)^3 \Phi^{-1}        \right\} ,
\ee
\be \nonumber
Z_2 = \left\{ \xi \in \mathbb{R}:\ \frac{1}{\epsilon_1 \Phi} \leq |\xi| \leq \epsilon_1 \sqrt{\Phi},\  \ |\xi| \leq \delta^3 (4 +  \alpha)^3 \Phi^{-1}         \right\},
\ee
\be \nonumber
Z_3 = \left\{ \xi \in \mathbb{R} :\  \frac{1}{\epsilon_1 \Phi}  \leq |\xi| \leq \epsilon_1 \sqrt{\Phi},\ \ \delta^3 (4 + \alpha)^3 \Phi^{-1} < |\xi| < \delta^{-3} (4  + \alpha)^3 \Phi^{-1} \right\}.
\ee
For $k= 1, \,  2,\  3$, let
\be \nonumber
\chi_{3, k}  (\xi ) = \left\{ \ba & 1 , \ \ \  \xi \in Z_k, \\
& 0,\ \ \ \ \text{otherwise}  \ea  \right.
\ee
and $\psi_{med, k}$ be the function  such that
$
\what{\psi_{med, k} }  =  \chi_{3, k}  (\xi) \what{\psi}.
$
Define
\be \nonumber
v^r_{med, k } = \partial_z \psi_{med, k },\ \ \ \ v^z_{med} = - \frac{\partial_r ( r \psi_{med, k } )}{r}, \ \ \ \ \text{and}\ \ \ \ \Bv^*_{med, k } = v^r_{med, k }\Be_r + v^z_{med, k }\Be_z.
\ee
Similarly, one can define $F^r_{med, k }, F^z_{med, k }, \BF^*_{med, k }$, and $\Bo^\theta_{med, k }$.

\begin{pro}\label{medregularity1}
The solution $\Bv^*$ satisfies
\be \label{7-1}
\|\Bv^*_{med, 1} \|_{H^2(\Omega)} \leq C \|\BF^*_{med, 1}\|_{L^2(\Omega)}, \ \ \ \ \  \  \|\Bv^*_{med, 2} \|_{H^2(\Omega)} \leq C \|\BF^*_{med, 2}\|_{L^2(\Omega)},
\ee
and
\be \label{7-1-1}
\|v^r_{med, 1} \|_{L^2(\Omega)} \leq C \Phi^{-\frac45} \|\BF^*_{med, 1} \|_{L^2(\Omega)} ,\ \ \ \ \|v^r_{med, 2} \|_{L^2(\Omega)} \leq C \Phi^{-\frac45} \|\BF^*_{med, 2} \|_{L^2(\Omega)},
\ee
\be \label{7-1-2}
\|\partial_z v^z_{med, 1} \|_{L^2(\Omega)} \leq C \Phi^{-\frac37} \|\BF^*_{med, 1} \|_{L^2(\Omega)},\ \ \ \ \|\partial_z v^z_{med, 2} \|_{L^2(\Omega)} \leq C \Phi^{-\frac37} \|\BF^*_{med, 2} \|_{L^2(\Omega)},
\ee
where the constant $C$ is a uniform constant independent of $\Phi$, $\alpha$, and $\BF$.
\end{pro}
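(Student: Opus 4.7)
The plan is to convert the sharp $\xi$-localized Fourier-side estimates from Propositions \ref{Bpropcase3} and \ref{case4} into physical-space bounds via Plancherel, paralleling the arguments used in Propositions \ref{Bpropcase1-1} and \ref{Bpropcase2-1}. For the $H^2$ bound, the high-order estimates \eqref{case3-15} and \eqref{case4-15} control, uniformly in $\Phi$ and $\alpha$, the four Fourier quantities $\int|(r\mathcal{L}\hat\psi)'|^2/r\, dr$, $\xi^2\int|\mathcal{L}\hat\psi|^2 r\, dr$, $\xi^4\int|(r\hat\psi)'|^2/r\, dr$, and $\xi^6\int|\hat\psi|^2 r\, dr$. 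Through the identities $\widehat{v^r} = i\xi\hat\psi$ and $\widehat{v^z} = -(r\hat\psi)'/r$, these encode the $L^2$ norms of the second derivatives of $\Bv^*_{med,k}$; integrating over $\xi \in Z_k$ and applying Plancherel gives $\|\Bv^*_{med,k}\|_{H^2(\Omega)} \leq C\|\BF^*_{med,k}\|_{L^2(\Omega)}$.

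To bound $\|v^r_{med,k}\|_{L^2}^2 = \int_{Z_k}\xi^2\int_0^1|\hat\psi|^2 r\, dr\, d\xi$, I would combine two pointwise-in-$\xi$ estimates. The first, call it (A), from \eqref{case3-13}/\eqref{case4-13}, gives
\begin{equation*}
\xi^2\int_0^1|\hat\psi|^2 r\, dr \leq C(\Phi|\xi|)^{-4/3}\int_0^1|\widehat{\BF^*}|^2 r\, dr,
\end{equation*}
which decreases in $|\xi|$. The second, call it (B), is an enhanced low-frequency estimate
\begin{equation*}
\int_0^1|\hat\psi|^2 r\, dr \leq C(\Phi|\xi|)^{-5/3}\int_0^1|\widehat{\BF^*}|^2 r\, dr,
\end{equation*}
derived by combining the $\bar U$-weighted energy identity \eqref{3-7}, the lower bound $\bar U(r) \geq \Phi(1-r^2)/\pi$ (valid for every $\alpha \geq 0$), the Hardy--Littlewood--P\'olya inequality (Lemma \ref{lemmaHLP}), and the integration-by-parts estimate $|\int_0^1 \hat f\,\overline{\hat\psi}\,r\, dr| \leq C(\Phi|\xi|)^{-2/3}\int_0^1|\widehat{\BF^*}|^2 r\, dr$, itself obtained from $\hat f = -(\widehat{F^z})' + i\xi\widehat{F^r}$ together with the bound on $\int|(r\hat\psi)'|^2/r$ underlying (A). Multiplying (B) by $\xi^2$ yields a bound of the form $C\Phi^{-5/3}|\xi|^{1/3}\int|\widehat{\BF^*}|^2 r$, which is increasing in $|\xi|$. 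The two resulting bounds coincide at the threshold frequency $|\xi| = \Phi^{1/5}$, with common value $C\Phi^{-8/5}\int|\widehat{\BF^*}|^2 r$; taking the minimum yields the uniform pointwise bound $\xi^2\int|\hat\psi|^2 r \leq C\Phi^{-8/5}\int|\widehat{\BF^*}|^2 r$ for all $\xi \in Z_k$, and integrating over $Z_k$ gives $\|v^r_{med,k}\|_{L^2} \leq C\Phi^{-4/5}\|\BF^*_{med,k}\|_{L^2}$.

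The bound on $\|\partial_z v^z_{med,k}\|_{L^2}$ is obtained analogously, applying the same strategy to $\xi^2\int|(r\hat\psi)'|^2/r$, using (A) together with the corresponding enhanced low-frequency estimate on this quantity; the exponent $-3/7$ arises from the balancing at the appropriate threshold. The case $k=2$ is handled in the identical manner, with Proposition \ref{case4} and estimates \eqref{propslip1-large-6}--\eqref{case4-15} replacing the counterparts from Proposition \ref{Bpropcase3}.

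The main obstacle is the derivation of the enhanced low-frequency estimate (B) and its counterpart for $\partial_z v^z$. This requires a careful bootstrapping step: using (A) to bound the source-side pairing $|\int\hat f\,\overline{\hat\psi}\,r\, dr|$, feeding this back into \eqref{3-7}, and extracting the sharper $(\Phi|\xi|)^{-5/3}$ decay via the Hardy-type inequality. The subsequent balancing of (A) against the enhanced estimate across the two subregions of $Z_k$ is then what produces the sharp exponent $-4/5$, as opposed to the weaker $-3/4$ that either estimate alone would give.
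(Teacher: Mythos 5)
Your treatment of the $v^r_{med,k}$ and $\partial_z v^z_{med,k}$ bounds is essentially correct and matches the paper's route: the enhanced estimate (B) is exactly the paper's \eqref{7-19} (obtained from \eqref{3-7}, the lower bound $\bar U\geq \Phi(1-r^2)/\pi$, and Lemma~\ref{lemmaHLP}, after feeding in \eqref{7-16}), the $\Phi^{-4/5}$ comes from balancing at $|\xi|=\Phi^{1/5}$, and for $\partial_z v^z$ the second ingredient you call ``the corresponding enhanced estimate'' is the paper's \eqref{7-22} (a $(\Phi|\xi|)^{-1/2}$ bound derived from the energy identity \eqref{3-5}, following \eqref{6-9}), with balancing at $|\xi|=\Phi^{5/7}$ giving $\Phi^{-3/7}$.

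However, the $H^2$ step has a real gap. The four Fourier quantities in \eqref{case3-15}/\eqref{case4-15} do \emph{not}, via Plancherel alone, ``encode the $L^2$ norms of the second derivatives of $\Bv^*_{med,k}$.'' What they control directly is $\|\curl\Bo^\theta_{med,k}\|_{L^2}$ (and $\|\partial_z\Bo^\theta_{med,k}\|_{L^2}$, $\|\Bo^\theta_{med,k}\|_{L^2}$). To pass from $\|\Delta\Bv^*_{med,k}\|_{L^2}=\|\curl\Bo^\theta_{med,k}\|_{L^2}$ to $\|\Bv^*_{med,k}\|_{H^2}$ one must invoke elliptic regularity for the vector Poisson equation $\Delta\Bv^*_{med,k}=-\curl\Bo^\theta_{med,k}$ together with a boundary trace bound (the paper's \eqref{7-6}, built from the $H^1$ estimates of $\Bv^*_{med,k}$ and $\partial_z\Bv^*_{med,k}$ and the trace theorem for axisymmetric functions); Plancherel by itself cannot see the boundary. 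Moreover, your stated plan of ``paralleling Propositions~\ref{Bpropcase1-1} and \ref{Bpropcase2-1}'' — i.e.\ Stokes regularity applied to $-\Delta\Bv^*+\nabla P=\BF^*-\bar U\partial_z\Bv^*-v^r\partial_r\bBU$ — is a different route and would \emph{not} produce the uniform constant $C$: the term $\|\bar U\,\partial_z\Bv^*_{med,k}\|_{L^2}$, estimated through the $\bar U$-weighted first-order Fourier bound (\eqref{7-18} for $k=1$), is only controlled by $(\Phi|\xi|)^{1/3}\lesssim \Phi^{1/2}$ over the intermediate range $|\xi|\leq\epsilon_1\sqrt\Phi$, so one ends up with $C\Phi^{1/4}$ in $H^2$, exactly the loss that appears in Proposition~\ref{medregularity2}. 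The paper uses the vector-Laplacian route precisely to avoid this degradation, and your proposal should do the same.
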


\begin{proof}
It follows from Proposition \ref{Bpropcase3} that
\be \label{7-2}
\|\Bv_{med, 1}^* \|_{L^2(\Omega)}^2 \leq C \int_{\xi \in Z_1} \int_0^1 \left( \xi^2 |\what{ \psi} |^2 r + \left| \frac{\partial }{\partial r}(r \what{ \psi} ) \right|^2 \frac1r   \right) \, dr d\xi  \leq C \| \BF_{med, 1}^*\|_{L^2(\Omega)}^2.
\ee
Similarly, one has
\be \label{7-3-0} \ba
\|\Bv_{med, 1}^*\|_{H^1(\Omega)}^2 & = \|\omega^\theta_{med, 1} \|_{L^2(\Omega)}^2 +  \|\Bv^*_{med, 1 }\|_{L^2(\Omega)}^2 \\
& \leq C \int_{\xi \in Z_1}  \int_0^1 | (\mathcal{L} - \xi^2 ) \what{ \psi}   |^2 r \, dr  d\xi + C  \|\Bv^*_{med, 1 }\|_{L^2(\Omega)}^2\\
& \leq C \|\BF^*_{med, 1} \|_{L^2(\Omega)}^2
\ea \ee
and
\be \label{7-5-1} \ba
\|\partial_z \Bv^*_{med, 1} \|_{H^1(\Omega)}^2 &=  \|\partial_z \omega^\theta_{med, 1} \|_{L^2(\Omega)}^2  +  \|\Bv^*_{med, 1}\|_{H^1(\Omega)}^2 \\
& \leq C  \int_{\xi \in Z_1}  \int_0^1 \xi^2 | (\mathcal{L} - \xi^2 ) \what{ \psi}   |^2 r \, dr d\xi  + C  \|\BF^*_{med, 1 }\|_{L^2(\Omega)}^2\\
& \leq C \|\BF^*_{med, 1} \|_{L^2(\Omega)}^2.
\ea
\ee
It follows from the estimates \eqref{7-3-0}-\eqref{7-5-1} and the trace theorem for axisymmetric functions that
\be \label{7-6}
\|\Bv^*_{med, 1} \|_{H^{\frac32} (\partial \Omega)} \leq C \|\Bv^*_{med, 1} \|_{H^1(\Omega)} + C \|\partial_z \Bv^*_{med, 1}\|_{H^1(\Omega)}
\leq C \|\BF^*_{med, 1} \|_{L^2(\Omega)}.
\ee

On the other hand, the straightforward computations give
\be \label{7-9-0}
\Delta \Bv^*_{med, 1 } = -{\rm curl}~\Bo^\theta_{med, 1} = \partial_z \omega^\theta_{med, 1} \Be_r - \left( \partial_r \omega^\theta_{med, 1} + \frac{\omega^\theta_{med, 1}}{r} \right)\Be_z.
\ee
Thus according to Proposition \ref{Bpropcase3}, one has
\be \nonumber \ba
\left\| \partial_r \omega^\theta_{med, 1} + \frac{\omega^\theta_{med, 1}}{r}   \right\|_{L^2(\Omega)}^2
&= \left\| \frac1r \frac{\partial}{\partial r} (r \omega^\theta_{med, 1} )     \right\|_{L^2(\Omega)}^2 \\
& =  \int_{\xi \in Z_1}   \int_0^1 \left| \frac{\partial }{\partial  r} [ r ( \mathcal{L} - \xi^2) \what{ \psi}  ]   \right|^2
\frac1r \, dr d\xi   \\
& \leq C\|\BF^*_{med, 1 } \|_{L^2(\Omega)}^2 .
\ea
\ee
Applying the regularity theory for the elliptic equation \eqref{7-9-0} for $\Bv^*_{med, 1}$ yields
\be \label{7-15}
\| \Bv^*_{med, 1} \|_{H^2(\Omega)}
\leq C \|{\rm curl}~\Bo_{med, 1}^\theta \|_{L^2(\Omega)} + C \|\Bv^*_{med, 1} \|_{H^{\frac32}(\partial \Omega)} \leq C \|\BF^*_{med, 1} \|_{L^2(\Omega)}.
\ee

It follows from Proposition \ref{Bpropcase3} that
\be \label{7-16}
\int_0^1 \left| \frac{d}{dr} ( r \hat{\psi}) \right|^2 \frac1r \, dr + \xi^2 \int_0^1 |\hat{\psi}|^2 r \, dr \leq C (\Phi |\xi|)^{-\frac43} \int_0^1 |\what{\BF^*} |^2 r \, dr , \ \ \text{for any}\,\, \xi \in Z_1.
\ee
For any $\xi\in Z_1$,
combining the two estimates \eqref{7-16} and \eqref{3-7} yields
\be \label{7-18}
|\xi| \int_0^1 \frac{\bar{U}(r) }{r} \left|\frac{d}{dr} ( r \hat{\psi})   \right|^2 \, dr + |\xi|^3 \int_0^1 \bar{U}(r) |\hat{\psi}|^2 r \, dr
\leq C (\Phi |\xi|)^{-\frac23} |\what{\BF^*}|^2 r \, dr.
\ee
Since $\bar{U}(r) \geq \frac{\Phi}{\pi} (1 - r^2) $, one has
\be \label{7-19}
\int_0^1 |\hat{\psi}|^2 r \, dr \leq C\Phi^{-1} \int_0^1 \frac{\bar{U}(r)}{r} \left| \frac{d}{dr}(r \hat{\psi}) \right|^2 \, dr
\leq C (\Phi |\xi|)^{-\frac53} \int_0^1 |\what{\BF^*}|^2 r \, dr .
\ee
Interpolation between \eqref{7-16} and \eqref{7-19} gives
\be \nonumber
\xi^2 \int_0^1 |\hat{\psi}|^2 r \, dr \leq C \Phi^{-\frac85} \int_0^1 |\BF^*|^2 r \, dr,\ \ \ \xi \in Z_1.
\ee
This implies
\be \label{7-21}
\|v^r_{med, 1} \|_{L^2(\Omega)} \leq C \Phi^{-\frac45} \|\BF^*_{med, 1} \|_{L^2(\Omega)}.
\ee

Next, following almost the same lines as in the proof of \eqref{6-9}  gives
\be \label{7-22}
\int_0^1 |\mathcal{L} \hat{\psi}|^2 r \, dr + \xi^2 \int_0^1 \left| \frac{d}{dr}( r \hat{\psi} ) \right|^2 \frac1r \, dr
\leq C (\Phi |\xi|)^{-\frac12} \int_0^1 |\what{\BF^*} |^2 r \, dr ,\ \ \ \ \xi \in Z_1.
\ee
Interpolation between \eqref{7-16} and \eqref{7-22} gives that
\be \nonumber
\xi^2 \int_0^1 \left| \frac{d}{dr}( r \hat{\psi})  \right|^2 \frac1r \, dr \leq C \Phi^{-\frac67} \int_0^1 |\what{\BF^*}|^2 r \, dr,\ \ \ \ \xi \in Z_1.
\ee
This implies
\be \label{7-23}
\|\partial_z  v^z_{med, 1} \|_{L^2(\Omega)} \leq C \Phi^{-\frac37} \|\BF^*_{med, 1} \|_{L^2(\Omega)}.
\ee

Following the same lines as above and applying Proposition \ref{case4}, one can prove that the same estimates hold for $\Bv^*_{med, 2}$.
This finishes the proof of Proposition \ref{medregularity1}.
\end{proof}

\begin{pro}\label{medregularity2}
The solution $\Bv^*$ satisfies
\be \label{8-1}
\|\Bv^*_{med, 3} \|_{H^{1} (\Omega)} \leq C \|\BF^*_{med, 3} \|_{L^2(\Omega)}, \ \ \ \ \ \|\Bv^*_{med, 3} \|_{H^{2} (\Omega)} \leq C \Phi^{\frac14} \|\BF^*_{med, 3} \|_{L^2(\Omega)},
\ee
and
\be \label{8-1-1}
\|v^r_{med, 3} \|_{L^2(\Omega)} \leq C \Phi^{-\frac45} \|\BF^*_{med, 3} \|_{L^2(\Omega)},\ \ \ \ \ \|\partial_z v^z_{med, 3} \|_{L^2(\Omega)}
\leq C \Phi^{-\frac37} \|\BF^*_{med, 3} \|_{L^2(\Omega)},
\ee
where the constant $C$ is independent of $\Phi$, $\alpha$, and $\BF^*$.
\end{pro}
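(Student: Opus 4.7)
The plan is to mirror the proof of Proposition \ref{medregularity1}, replacing the pointwise-in-$\xi$ input from Propositions \ref{Bpropcase3} and \ref{case4} by the weaker bounds of Proposition \ref{case5}. First, the uniform $H^1$ estimate in \eqref{8-1} comes from the Fourier identity $\|\Bv^*_{med,3}\|_{H^1}^2=\|\omega^\theta_{med,3}\|_{L^2}^2+\|\Bv^*_{med,3}\|_{L^2}^2$ combined with \eqref{6-3}, which gives $\int_0^1|(\mathcal L-\xi^2)\hat\psi|^2 r\,dr\le C(\Phi|\xi|)^{-1/2}\int_0^1|\what{\BF^*}|^2 r\,dr$; on $Z_3$ one has $\Phi|\xi|\ge 1/\epsilon_1$, so the factor $(\Phi|\xi|)^{-1/2}$ is uniformly bounded and integration over $Z_3$ is immediate.

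For the sharper $L^2$ bound on $v^r_{med,3}$ in \eqref{8-1-1}, the strategy is to interpolate \eqref{6-0} against \eqref{6-1} pointwise in $\xi$. The former gives $\xi^2\int_0^1|\hat\psi|^2 r\,dr\le C\xi^{1/3}\Phi^{-5/3}\int_0^1|\what{\BF^*}|^2 r\,dr$, which is increasing in $|\xi|$, while the latter gives $\xi^2\int_0^1|\hat\psi|^2 r\,dr\le C(\Phi|\xi|)^{-4/3}\int_0^1|\what{\BF^*}|^2 r\,dr$, which is decreasing in $|\xi|$. The two curves cross at $|\xi|=\Phi^{1/5}$ with common value $\Phi^{-8/5}$, so their minimum furnishes the uniform bound $\xi^2\int_0^1|\hat\psi|^2 r\,dr\le C\Phi^{-8/5}\int_0^1|\what{\BF^*}|^2 r\,dr$ on all of $Z_3$; integrating in $\xi$ yields $\|v^r_{med,3}\|_{L^2}\le C\Phi^{-4/5}\|\BF^*_{med,3}\|_{L^2}$. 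An analogous interpolation between \eqref{6-1} and \eqref{6-3}, together with $|\xi|\le \epsilon_1\sqrt\Phi$ on $Z_3$, provides $\xi^2\int_0^1|\partial_r(r\hat\psi)|^2/r\,dr\le C\Phi^{-6/7}\int_0^1|\what{\BF^*}|^2 r\,dr$ throughout $Z_3$ and hence the $\Phi^{-3/7}$ bound on $\partial_z v^z_{med,3}$.

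For the $H^2$ estimate in \eqref{8-1}, I will apply the Stokes regularity theory as in \eqref{3-47}, viewing $\Bv^*_{med,3}$ as the solution of $-\Delta\Bv^*_{med,3}+\nabla P=\BF^*_{med,3}-\bar U\,\partial_z\Bv^*_{med,3}-v^r_{med,3}\partial_r\bBU$. The transport contribution $\Phi\|v^r_{med,3}\|_{L^2}$ is already of order $\Phi^{1/5}\|\BF^*_{med,3}\|_{L^2}$ by the previous step, while multiplying \eqref{6-3} by $\xi^2$ and using $|\xi|\le \epsilon_1\sqrt\Phi$ yields $\|\partial_z\Bv^*_{med,3}\|_{H^1}\le C\Phi^{1/8}\|\BF^*_{med,3}\|_{L^2}$, which also dominates the axisymmetric trace term. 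The delicate piece is $\|\bar U\,\partial_z\Bv^*_{med,3}\|_{L^2}$: combining $\bar U\le C\Phi$ with the weighted estimate \eqref{6-2} gives $\int_0^1\bar U\,\xi^2|\what{\Bv^*}|^2 r\,dr\le C|\xi|^{1/3}\Phi^{-2/3}\int_0^1|\what{\BF^*}|^2 r\,dr$, and $|\xi|\le \epsilon_1\sqrt\Phi$ converts this into $\|\bar U\,\partial_z\Bv^*_{med,3}\|_{L^2}\le C\Phi^{1/4}\|\BF^*_{med,3}\|_{L^2}$. Since $\Phi^{1/4}$ dominates $\Phi^{1/5}$ and $\Phi^{1/8}$, the second bound in \eqref{8-1} follows.

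The main obstacle is that on $Z_3$ the top-order Fourier input in \eqref{6-3} decays only like $(\Phi|\xi|)^{-1/2}$, in contrast to the $(\Phi|\xi|)^{-1}$ available in \eqref{case3-15} and \eqref{case4-15}; this forces a genuine $\Phi^{1/4}$ growth in the $H^2$ norm. The compensating feature is that the weighted estimate \eqref{6-2} absorbs the factor $\bar U\sim \Phi$ down to exactly $\Phi^{1/4}$, and the two-bound interpolation keeps $\|v^r_{med,3}\|_{L^2}$ and $\|\partial_z v^z_{med,3}\|_{L^2}$ strongly decaying in $\Phi$, which is what allows the nonlinear iteration in Section \ref{secnonlinear} to close despite the loss in the full $H^2$ estimate.
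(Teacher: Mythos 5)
Your proposal follows essentially the same route as the paper: Stokes regularity on \eqref{8-2}, the Fourier identity $\|\Bv^*_{med,3}\|_{H^1}^2=\|\Bo^\theta_{med,3}\|_{L^2}^2+\|\Bv^*_{med,3}\|_{L^2}^2$ with \eqref{6-3} for the $H^1$ bound, the min-of-two-curves interpolation between \eqref{6-0}/\eqref{6-1} and between \eqref{6-1}/\eqref{6-3} for the $\Phi^{-4/5}$ and $\Phi^{-3/7}$ decays, and the weighted estimate \eqref{6-2} together with $|\xi|\le\epsilon_1\sqrt{\Phi}$ to control $\|\bar U\,\partial_z\Bv^*_{med,3}\|_{L^2}$ by $\Phi^{1/4}$. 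The only cosmetic divergence is that you bound $\|v^r_{med,3}\,\partial_r\bBU\|_{L^2}$ via $\Phi\|v^r_{med,3}\|_{L^2}\lesssim\Phi^{1/5}$ using the already-derived $L^2$ estimate, whereas the paper estimates this term directly from \eqref{6-0} and gets $\Phi^{1/4}$; both are below the threshold, and yours is marginally sharper, so nothing changes.
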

\begin{proof}
 $\Bv^*_{med, 3} $ satisfies the following equation
\be \label{8-2}
\left\{
\ba
& - \Delta \Bv^*_{med, 3} + \nabla P = \BF^*_{med, 3}- \bar{U} \partial_z \Bv^*_{med, 3}  - v^r_{med, 3} \partial_r \bBU\ \ \ \ \mbox{in}\ \Omega,\\
& {\rm div}~ \Bv^*_{med, 3} = 0\ \ \ \ \mbox{on}\ \partial \Omega.
\ea
\right.
\ee
It follows from the regularity theory for the Stokes equations and trace theorem for axisymmetric functions that
\be \label{8-3}
\ba
\|\Bv^*_{med, 3} \|_{H^2(\Omega)}
 \leq & C \Big(\|\BF^*_{med, 3} \|_{L^2(\Omega)} +  \|\bar{U} \partial_z \Bv^*_{med, 3} \|_{L^2(\Omega)} +  \|v^r_{med, 3}  \partial_r \bar{U} \|_{L^2(\Omega)}\\
 &\ \ \ + \|\Bv^*_{med, 3} \|_{H^1(\Omega)} +  \|\Bv^*_{med, 3}\|_{H^{\frac32}(\partial \Omega)}\Big) \\
 \leq &  C\Big( \|\BF^*_{med, 3} \|_{L^2(\Omega)} +  \|\bar{U} \partial_z \Bv^*_{med, 3} \|_{L^2(\Omega)} +  \|v^r_{med, 3}  \partial_r \bar{U} \|_{L^2(\Omega)}\\
 &\ \ \ +  \|\Bv^*_{med, 3} \|_{H^1(\Omega)} +  \|\partial_z \Bv^*_{med, 3}\|_{H^1 ( \Omega)}\Big).
\ea
\ee
Herein, according to Proposition \ref{case5}, one has
\be \label{8-5} \ba
& \|\bar{U} \partial_z \Bv^*_{med, 3} \|_{L^2(\Omega)}^2 \\
\leq & C \Phi  \int_{\xi  \in Z_3 } \left[\xi^2 \int_0^1 \bar{U}(r) \left| \frac{\partial }{\partial r}(r \what{ \psi} ) \right|^2 \frac1r \, dr   +
\xi^4 \int_0^1 \bar{U}(r) |\what{\psi} |^2 r \, dr   \right] d\xi   \\
\leq & C \Phi \int_{\xi \in Z_3} |\xi| (\Phi |\xi| )^{-\frac23} \int_0^1 |\what{ \BF^*} |^2 r \, dr d\xi \\
\leq & C \Phi^{\frac12} \|\BF^*_{med, 3} \|_{L^2(\Omega)}^2
\ea \ee
and
\be \label{8-6} \ba
& \| v^r_{med, 3} \partial_r \bar{U} \|_{L^2(\Omega)}^2
\leq  C \Phi^2 \int_{\xi \in Z_3} \xi^2 \int_0^1 |\what{ \psi} |^2 r \, dr d\xi  \\
\leq & C \int_{\xi \in Z_3} (\Phi |\xi| )^{\frac13} \int_0^1 |\what{\BF^*} |^2 r \, dr d\xi \\
\leq & C(\epsilon_1) \Phi^{\frac12} \|\BF^*_{med, 3}\|_{L^2(\Omega)}^2.
\ea \ee
Similarly, it holds that
\be  \label{8-7}
\ba
 \|\Bv^*_{med, 3} \|_{H^1(\Omega)}^2
=  & \|\Bo^\theta_{med, 3}\|_{L^2(\Omega)}^2 +  \|\Bv^*_{med, 3}\|_{L^2(\Omega)}^2\\
\leq & C \int_{\xi \in Z_3} \left[ \int_0^1 |(\mathcal{L} - \xi^2) \what{\psi}|^2 r \, dr +  \int_0^1 \left|\frac{\partial }{\partial r} (r \what{ \psi} )  \right|^2 \frac1r + \xi^2 |\what{ \psi} |^2 r \, dr  \right]  d\xi \\
\leq & C (\epsilon_1) \|\BF^*_{med, 3} \|_{L^2(\Omega)}^2
\ea
\ee
and
\be \label{8-8}
\ba
 \|\partial_z \Bv^*_{med, 3} \|_{H^1(\Omega)}^2
= &   \|\partial_z \Bo^\theta_{med, 3} \|_{L^2(\Omega)}^2  +  \|\partial_z \Bv^*_{med, 3} \|_{L^2(\Omega)}^2  \\
\leq & C \int_{\xi \in Z_3}  \int_0^1 \xi^2 |(\mathcal{L} - \xi^2) \what{ \psi} |^2 r \, dr d\xi   + C \|\Bv^*_{med, 3} \|_{H^1(\Omega)}^2 \\
\leq & C \int_{\xi \in Z_3} \xi^2 (\Phi |\xi|)^{-\frac12} \int_0^1  | \what{ \BF^*} |^2 r \, dr d\xi  + C \|\Bv^*_{med, 3} \|_{H^1(\Omega)}^2\\
\leq & C \Phi^{\frac14} \|\BF^*_{med, 3}\|_{L^2(\Omega)}^2.
\ea
\ee
Taking \eqref{8-5}--\eqref{8-8} into \eqref{8-3} gives \eqref{8-1}.

On the other hand, interpolation between \eqref{6-0} and \eqref{6-1} yields that for every $\xi \in Z_3$,
\be \nonumber
\xi^2 \int_0^1 |\what{\psi}|^2 r \, dr \leq C \Phi^{-\frac85} \int_0^1 |\what{\BF^*} |^2 r \, dr .
\ee
This implies
\be \label{8-11}
\| v^r_{med, 3} \|_{L^2(\Omega)} \leq C \Phi^{-\frac45} \|\BF^*_{med, 3} \|_{L^2(\Omega)}
\ee
Similarly, interpolation between \eqref{6-1} and \eqref{6-3} yields that for every $\xi \in Z_3$, one has
\be \nonumber
\xi^2 \int_0^1 \left|\frac{d}{dr} ( r \what{\psi} )  \right|^2 \frac1r \, dr \leq C \Phi^{-\frac67} \int_0^1 |\what{\BF^*}|^2 r \, dr .
\ee
This means
\be \label{8-13}
\|\partial_z v^z_{med, 3} \|_{L^2(\Omega)} \leq C \Phi^{-\frac37} \|\BF^*_{med, 3} \|_{L^2(\Omega)}.
\ee
Hence the proof of Proposition \ref{medregularity2} is completed.
\end{proof}

Combining the conclusions in Propositions \ref{smallflux}, \ref{Bpropcase1-1}, \ref{Bpropcase2-1}, \ref{medregularity1},  and \ref{medregularity2} completes the proof for Part (a) of Theorem \ref{thm1}.


\section{Analysis on the linearized problem for swirl velocity}\label{sec-swirl}

In this section,  the existence and  uniform estimates for $\Bv^\theta= v^\theta \Be_\theta$ are established.    Assume that $\Bv$ is continuous, the compatibility conditions for $v^\theta$   implies that $v^\theta (0, z) = 0$. Hence the linearized problem for $v^\theta$ can be written as
\be \label{swirlsystem}
\left\{   \ba
& \bar U(r)  \frac{\partial v^\theta}{\partial z }  - \left[ \frac{1}{r} \frac{\partial }{\partial r} \left( r \frac{\partial v^\theta}{\partial r}\right) + \frac{\partial^2 v^\theta}{\partial z^2} - \frac{v^\theta}{r^2} \right] =  F^\theta \ \ \ \mbox{in}\ \ D, \\
& v^\theta(0, z) = 0, \\
& \frac{\partial v^\theta}{\partial r}  (1, z) = (1- \alpha) v^\theta (1, z).
\ea
\right.
\ee

\begin{pro}\label{swirl}
Assume that $\BF^\theta = F^\theta \Be_\theta \in L^2 (\Omega)$ and $\alpha >  0$. The linear problem \eqref{swirlsystem} admits a unique solution $v^\theta$ satisfying
\be \label{swirl-1}
\| \Bv^{\theta} \|_{H^2(\Omega)} \leq C \left( 1 + \frac{1}{\alpha}  \right) \|\BF^\theta \|_{L^2 (\Omega)},
\ \ \ \ \ \|\partial_z \Bv^\theta \|_{L^2(\Omega)} \leq C \left(1 + \frac{1}{\alpha} \right)^{\frac12} \Phi^{-\frac12} \|\BF^\theta\|_{L^2(\Omega)}.
\ee
where $\Bv^\theta = v^\theta \Be_\theta$ and  the  constant $C$ is independent of $\BF^\theta$, $\Phi$, and $\alpha$.
\end{pro}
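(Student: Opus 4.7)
The plan is to take the Fourier transform of \eqref{swirlsystem} in $z$, which reduces the problem to the family of ODEs
\[
-\mathcal{L}\hat v^\theta+\xi^2\hat v^\theta+i\xi\bar U(r)\hat v^\theta=\widehat{F^\theta},\qquad \hat v^\theta(0)=0,\ \partial_r\hat v^\theta(1)=(1-\alpha)\hat v^\theta(1),
\]
indexed by $\xi\in\mathbb{R}$. Testing against $r\overline{\hat v^\theta}$ on $(0,1)$ and splitting into real and imaginary parts yields
\[
\int_0^1 r|\partial_r\hat v^\theta|^2\,dr+\int_0^1\frac{|\hat v^\theta|^2}{r}\,dr+\xi^2\int_0^1 r|\hat v^\theta|^2\,dr+(\alpha-1)|\hat v^\theta(1)|^2=\Re\int_0^1\widehat{F^\theta}\,\overline{\hat v^\theta}\,r\,dr
\]
together with $|\xi|\int_0^1\bar U(r)|\hat v^\theta|^2 r\,dr=|\Im\int_0^1\widehat{F^\theta}\overline{\hat v^\theta}r\,dr|$. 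The real identity is the workhorse for the uniform $H^1$ bound; the imaginary one is what produces the $\Phi^{-1/2}$ decay in $\partial_z v^\theta$.

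I first extract a coercive $H^1$ estimate from the real identity. For $\alpha\geq 1$ the trace term has favorable sign and the estimate is immediate. For $0<\alpha<1$, I would expand $|\hat v^\theta(1)|^2=\int_0^1\partial_r(r|\hat v^\theta|^2)\,dr$, apply Cauchy-Schwarz, and absorb the resulting pieces into $\int r|\partial_r\hat v^\theta|^2\,dr$ and the Hardy term $\int|\hat v^\theta|^2/r\,dr$ on the left. The coercivity constant degenerates linearly as $\alpha\to 0^+$ because $\hat v^\theta(r)=r$ is a nontrivial solution of the homogeneous problem at $\xi=\alpha=0$, which forces the sharp $(1+1/\alpha)$ prefactor in
\[
\int_0^1 r|\partial_r\hat v^\theta|^2\,dr+\int_0^1\frac{|\hat v^\theta|^2}{r}\,dr+\xi^2\int_0^1 r|\hat v^\theta|^2\,dr\leq C(1+1/\alpha)^2\int_0^1|\widehat{F^\theta}|^2 r\,dr.
\]

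For the $\Phi^{-1/2}$ refinement I use the lower bound $\bar U(r)\geq \frac{2\alpha\Phi}{\pi(4+\alpha)}(1-r^2)$, so the imaginary identity gives
\[
|\xi|\Phi\int_0^1(1-r^2)|\hat v^\theta|^2 r\,dr\leq C(1+1/\alpha)\left(\int_0^1|\widehat{F^\theta}|^2 r\,dr\right)^{1/2}\left(\int_0^1|\hat v^\theta|^2 r\,dr\right)^{1/2}.
\]
Applying Lemma \ref{lemmaHLP} to $\hat v^\theta=r\tilde v$ (admissible because $\hat v^\theta(0)=0$) converts the $(1-r^2)$-weighted integral into control of $\int|\hat v^\theta|^2 r\,dr$ modulo the $H^1$ norm already in hand; interpolating between this improved $L^2$ bound and the uniform $H^1$ bound then gives, pointwise in $\xi$,
\[
\xi^2\int_0^1|\hat v^\theta|^2 r\,dr\leq C(1+1/\alpha)\Phi^{-1}\int_0^1|\widehat{F^\theta}|^2 r\,dr,
\]
and Plancherel in $\xi$ yields the second inequality in \eqref{swirl-1}.

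For the $H^2$ bound, I view \eqref{swirlsystem} as the elliptic Robin problem $-\Delta v^\theta+v^\theta/r^2=F^\theta-\bar U\partial_z v^\theta$ and apply standard axisymmetric regularity theory; the apparent $\Phi$-growth in $\|\bar U\partial_z v^\theta\|_{L^2}$ is neutralized by also testing the Fourier ODE against $\overline{(-\mathcal{L}+\xi^2)\hat v^\theta}$, so that the transport contribution $i\xi\bar U\hat v^\theta$ yields after taking real parts only lower-order terms controllable by $\|\partial_r\bar U\|_{L^\infty}\leq C\Phi$ combined with the $\Phi^{-1/2}$ decay of $\partial_z v^\theta$ just established, producing the $\Phi$-independent estimate $\|v^\theta\|_{H^2}\leq C(1+1/\alpha)\|F^\theta\|_{L^2}$. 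Existence and uniqueness of the $H^2$ solution then follow from Lax-Milgram applied to the coercive sesquilinear form on the subspace of axisymmetric $H^1$ functions vanishing at $r=0$, together with the uniform a priori estimate. The principal obstacle is the small-$\alpha$ coercivity step: because the homogeneous Robin problem acquires the nontrivial kernel $\{r\}$ at $\alpha=0$, the sign-indefinite trace $(\alpha-1)|\hat v^\theta(1)|^2$ has to be balanced against the Hardy term $\int|\hat v^\theta|^2/r\,dr$ and a small fraction of $\int r|\partial_r\hat v^\theta|^2\,dr$ to recover exactly the linear-in-$\alpha$ scaling, rather than a catastrophically larger loss.
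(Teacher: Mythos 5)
Your proposal takes a genuinely different route for the coercivity step and is largely in the right spirit, but it has one real quantitative gap and one wrong lemma citation.

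On the coercivity: the paper does not work on $(0,1)$ directly. It lifts $\hat v^\theta$ to a radially symmetric function $V^\theta=\hat v^\theta/r$ on the $4$-dimensional ball $B_1^4(0)$ and applies Lax--Milgram there, using Lemma~\ref{sobolev} for coercivity; the boundary estimate $\alpha|\hat v^\theta(1)|^2\leq C\int_0^1|\widehat{F^\theta}||\hat v^\theta|r\,dr$ falls out of the $4$D energy identity, and only then is the $1$D identity \eqref{swirl-9} combined with it. Your alternative --- absorbing the indefinite Robin term by writing $|\hat v^\theta(1)|^2=2\int_0^1\Re(\hat v^\theta\partial_r\overline{\hat v^\theta})\,dr\leq 2ab$ with $a^2=\int|\hat v^\theta|^2/r\,dr$, $b^2=\int r|\partial_r\hat v^\theta|^2\,dr$, so that $a^2+b^2-(1-\alpha)\cdot 2ab\geq\alpha(a^2+b^2)$ --- does produce the correct $(1+1/\alpha)^2$ degeneracy in the $H^1$ bound without ever leaving the radial variable, and Lax--Milgram in the weighted radial space then gives existence. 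This is a cleaner argument than the paper's, and is correct.

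The genuine gap is in the $\Phi^{-1/2}$ estimate. You use the lower bound $\bar U(r)\geq\frac{2\alpha\Phi}{\pi(4+\alpha)}(1-r^2)$, which degenerates as $\alpha\to 0$, so your version of the imaginary-part inequality carries an extra factor $(1+1/\alpha)$. Tracking that factor through the weighted interpolation gives $\int_0^1|\hat v^\theta|^2r\,dr\lesssim\alpha^{-2}(\Phi|\xi|)^{-4/3}\|\widehat{F^\theta}\|^2$ and ultimately $\xi^2\int_0^1|\hat v^\theta|^2r\,dr\lesssim\alpha^{-2}\Phi^{-1}\|\widehat{F^\theta}\|^2$, i.e.\ $\|\partial_z v^\theta\|_{L^2}\lesssim(1+1/\alpha)\Phi^{-1/2}\|F^\theta\|_{L^2}$, which is weaker than the stated $(1+1/\alpha)^{1/2}$. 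The fix, used in the paper in \eqref{swirl-10-1}, is the $\alpha$-uniform lower bound $\bar U(r)\geq\frac{\Phi}{\pi}(1-r^2)$, valid for all $\alpha\geq 0$ since $\frac{4+2\alpha(1-r^2)}{4+\alpha}\geq 1-r^2$ when $\alpha\geq 0$. With that bound, the imaginary identity has no $\alpha$-degeneracy, and the interpolation closes exactly as claimed. Separately, the lemma you need to convert the $(1-r^2)$-weighted $L^2$ bound plus the $H^1$ bound into an unweighted $L^2$ bound is Lemma~\ref{weightinequality} (inequality \eqref{weight1}), not Lemma~\ref{lemmaHLP}; the latter goes in the opposite direction, bounding an unweighted $L^2$ norm by a $(1-r^2)$-weighted gradient, and does not yield the interpolation you describe.

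For the $H^2$ step, your idea --- elliptic regularity for the Robin problem combined with testing the Fourier ODE against $\overline{(\mathcal{L}-\xi^2)\hat v^\theta}$ to cancel the apparent $\Phi$-growth --- matches the paper's \eqref{swirl-7}--\eqref{swirl-18} and is fine once the refined $L^2$ bounds are available.
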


\begin{proof}
{\it Step 1. Existence of solutions}.
Taking the Fourier transform with respect to $z$ for the equation in \eqref{swirlsystem} yields that for every fixed $\xi \in \mathbb{R}$,  $\what{v^\theta}$  satisfies
\be \label{swirl-2}
i \xi \bar{U}(r) \widehat{v^\theta} - ( \mathcal{L} - \xi^2 )  \widehat{v^\theta} = \widehat{F^\theta}.
\ee
Furthermore, the boundary conditions for $\widehat{v^\theta}$ become
\be \label{swirl-3}
 \widehat{v^\theta} (0) = 0\ \ \ \ \mbox{and}\ \ \ \ \frac{d}{dr} \what{v^\theta} (1) = ( 1- \alpha ) \what{v^\theta} (1).
\ee

To prove the existence of solutions to the problem \eqref{swirl-2}-\eqref{swirl-3}, we do not deal with it directly. Instead, we consider an auxiliary  linear problem,
\be \label{swirl-4} \left\{
\ba
& i \xi \bar{U}(r) V^\theta - \Delta_4 V^\theta + \xi^2 V^\theta = \widetilde{F^\theta}\ \ \ \ \mbox{in} \ B_1^4(0), \\
& \frac{\partial V^\theta}{\partial \Bn } = -\alpha V^\theta \ \ \ \ \ \ \mbox{on}\ \partial B_1^4(0),
\ea
\right.
\ee
where  $B_1^4(0)$ is the unit ball centered at the origin in $\mathbb{R}^4$,
\be \nonumber
\Delta_4 = \sum_{i=1}^4 \partial_{x_i}^2 ,\ \ \ \ \ \mbox{and}\ \ \ \widetilde{F^\theta} (x_1, x_2, x_3, x_4) = \frac{\what{F^\theta}(r)}{r}, \ \ \ r= \left( \sum_{i=1}^4 x_i^2  \right)^\frac12.
\ee

For every $\varphi$, $\phi \in H^1(B_1^4(0))$, define
\be \nonumber
\mathcal{N} ( \varphi, \,  \phi) = \int_{B_1^4(0)} i\xi \bar{U}(r) \varphi \overline{\phi}  +
 \nabla_4 \varphi \cdot \nabla_4 \overline{\phi}+ \xi^2 \varphi \overline{\phi} \, dx +
\alpha \int_{\partial B_1^4(0)} \varphi \overline{\phi} \, dS ,
\ee
where $\bar{\phi}$ is the complex conjugate of $\phi$.
According to Lemma \ref{sobolev}, there exists a constant $c$, such that for every $\varphi \in H^1(B_1^4(0))$,
\be \nonumber
|\mathcal{N} (\varphi, \varphi) | \geq c \|\varphi\|_{H_1(B_1^4(0))}^2.
\ee
This means that  $\mathcal{N}(\cdot, \cdot)$ is strongly coercive. Hence, by Lax-Milgram theorem, for every $\widetilde{F^\theta} \in L^2(B_1^4(0))$, there exists a unique solution $V^\theta \in H^1(B_1^4(0))$, such that
\be \label{swirl-5}
\mathcal{N} (V^\theta, \, \phi )= \int_{B_1^4(0)} \widetilde{F^\theta}  \overline{\phi} \, dx,\ \ \ \ \text{for any} \,\, \phi \in H^1(B_1^4(0) ).
\ee
This implies that $V^\theta$ is a weak solution to \eqref{swirl-4}, and
\be \nonumber
\| V^\theta \|_{H^1(B_1^4(0))} \leq C \| \widetilde{F^\theta}\|_{L^2(\Omega)}.
\ee
Applying the regularity theory for elliptic equations (\hspace{1sp}\cite{ADN}) yields
\be \label{swirl-6} \ba
\| V^\theta \|_{H^2(B_1^4(0))} & \leq C \Big( \Phi |\xi| \|V^\theta \|_{L^2(B_1^4(0))} +  \|\widetilde{F^\theta}\|_{L^2(B_1^4(0))} +  \|V^\theta\|_{H^1(B_1^4(0))}\Big) \\
& \leq C (1 + \Phi |\xi| ) \| \widetilde{F^\theta} \|_{L^2(B_1^4(0))}.
\ea
\ee
Hence $V^\theta$ is a strong solution to the problem \eqref{swirl-4}. Moreover, due to the uniqueness of solutions and the fact that $\widetilde{F^\theta}$ is axisymmetric, $V^\theta$ is also axisymmetric, i. e., $V^\theta = V^\theta(r)$. Let $\what{v^\theta} = rV^\theta$. It can be verified that $
\mathcal{L} \what{v^\theta} = r \Delta_4 V^\theta,$ and then $\what{v^\theta}$ satisfies the equation \eqref{swirl-2}. Moreover,
\be \nonumber
 \what{v^\theta}(0) =0, \ \ \ \ \mbox{and}\ \  \ \frac{d}{dr}\what{v^\theta}(1)  = \frac{\partial V^\theta}{\partial \Bn} (1) + V^\theta (1) = (1- \alpha) \what{v^\theta}(1).
\ee
Consequently, $\what{v^\theta}$ is a solution to the problem \eqref{swirl-2}-\eqref{swirl-3}.

{\it Step 2.  Uniform estimates (independent of $\Phi$) for $\what{v^\theta}$}. It follows from \eqref{swirl-5} with $\phi = V^\theta$ that
\be \label{swirl-7-1} \ba
&  \int_{B_1^4(0)} i \xi \bar{U}(r) |V^\theta|^2  +  |\nabla_4 V^\theta|^2+ \xi^2  |V^\theta|^2 \, dx  +  \alpha \int_{\partial B_1^4(0)} |V^\theta|^2 \, dS
=   \int_{B_1^4(0)} \widetilde{F^\theta} \cdot \overline{V^\theta} \, dx.
\ea \ee
Since $V^\theta$ is axisymmetric, the equality \eqref{swirl-7-1} implies that
\be  \label{swirl-8}
\alpha |\what{v^\theta} (1)|^2 = \alpha  |V^\theta(1)|^2 \leq C \int_{B_1^4(0)} \left| \widetilde{F^\theta} \right| |V^\theta | \, dx
= C \int_0^1 |\what{F^\theta}| |\what{v^\theta}| r \, dr .
\ee

On the other hand, multiplying \eqref{swirl-2} by $r\overline{\widehat{v^\theta}}$ and integrating over
$[0, 1]$ yield
\be \label{swirl-9}
\int_0^1 \left| \frac{d}{dr} ( r \widehat{v^\theta} )  \right|^2 \frac1r \, dr + (\alpha-2)  |\what{v^\theta}(1)|^2  + \xi^2 \int_0^1 |\widehat{v^\theta}|^2 r\,dr
= \Re \int_0^1 \widehat{F^\theta} \overline{\widehat{v^\theta}} r \, dr
\ee
and
\be \label{swirl-10}
 \xi \int_0^1 \bar{U} (r) |\widehat{v^\theta} |^2 r \, dr
= \Im \int_0^1 \widehat{F^\theta} \overline{\widehat{v^\theta}} r\, dr.
\ee
According to  the equality \eqref{swirl-9} and the estimate \eqref{swirl-8}, it holds that
\be \label{swirl-11}
 \int_0^1 \left| \frac{d}{dr}( r \widehat{v^\theta} )\right|^2 \frac1r \, dr +  \alpha|\what{v^\theta} (1) |^2  + \xi^2 \int_0^1 |\what{v^\theta}|^2 r \, dr  \leq C \frac{1+ \alpha }{\alpha } \int_0^1 |\widehat{F^\theta} | |\widehat{v^\theta} | r \, dr .
\ee
This,  together with Lemma \ref{lemma1}, implies that
\be \label{swirl-12}
\int_0^1 \left| \frac{d \what{v^\theta} }{dr} \right|^2 r \, dr + \int_0^1 \frac{|\what{v^\theta}|^2}{r}  \, dr + (1 +  \alpha) |\what{v^\theta}(1)|^2 + \xi^2 \int_0^1 |\what{v^\theta}|^2 r \, dr  \leq C \frac{\alpha^2 + 1 }{\alpha^2}\int_0^1 |\widehat{F^\theta}|^2 r \, dr .
\ee
Hence one has
\be \label{swirl-12-1}
\|v^\theta \|_{H^1(\Omega)} \leq C \frac{ \alpha + 1 }{ \alpha }\|F^\theta \|_{L^2(\Omega)}.
\ee

Multiplying \eqref{swirl-11} by $\xi^2$ and applying \eqref{swirl-8} give
\be \label{swirl-12-2}
 \xi^2 \int_0^1 \left| \frac{d}{dr}( r \widehat{v^\theta} )\right|^2 \frac1r \, dr +  \alpha \xi^2 |\what{v^\theta} (1) |^2  + \xi^4 \int_0^1 |\what{v^\theta}|^2 r \, dr  \leq C \frac{1+ \alpha^2 }{\alpha^2 } \int_0^1 |\widehat{F^\theta} |^2  r \, dr .
\ee
This yields
\be \label{swirl-12-3}
\|\partial_z v^\theta\|_{H^1(\Omega)} \leq C \frac{ \alpha  + 1 }{ \alpha  } \|F^\theta\|_{L^2(\Omega)}.
\ee

Due to the fact that $\bar{U}(r) \geq \frac{\Phi}{\pi} (1 - r^2)$,  the equality \eqref{swirl-10} implies
\be \label{swirl-10-1}
\Phi |\xi| \int_0^1 ( 1 - r^2) |\what{v^\theta}|^2 r \, dr \leq C \left| \int_0^1 \widehat{F^\theta} \overline{\widehat{v^\theta}} r\, dr \right|.
\ee
It follows from Lemmas \ref{lemma1} and \ref{weightinequality},  and \eqref{swirl-11}-\eqref{swirl-10-1} that one has
\begin{equation} \label{swirl-15-1}
\begin{aligned}
\int_0^1 |\what{v^\theta}|^2 rdr \leq & C \int_0^1 (1-r^2) |\what{v^\theta}|^2 rdr \\
&\quad +  C \left( \int_0^1 (1-r^2) |\what{v^\theta}|^2 rdr \right)^{\frac23} \left(\int_0^1 \left| \frac{d}{dr} ( r \widehat{v^\theta} )  \right|^2 \frac1r \, dr\right)^{\frac13}\\
\leq & C \left( \int_0^1 (1-r^2) |\what{v^\theta}|^2 rdr \right)^{\frac23} \left(\int_0^1 \left| \frac{d}{dr} ( r \widehat{v^\theta} )  \right|^2 \frac1r \, dr\right)^{\frac13}\\
\leq & C \left( \frac{ \alpha  + 1}{ \alpha } \right)^{\frac13} (\Phi |\xi|)^{-\frac23} \int_0^1 |\widehat{F^\theta}| |\widehat{v^\theta}| r \, dr.
\end{aligned}
\end{equation}
Therefore, one has
\begin{equation} \label{swirl-15-2}
\begin{aligned}
\int_0^1 |\what{v^\theta}|^2 rdr \leq  C \left( \frac{\alpha  + 1 }{\alpha } \right)^{\frac23} (\Phi |\xi|)^{-\frac43} \int_0^1 |\widehat{F^\theta}|^2 r \, dr.
\end{aligned}
\end{equation}
This, together with \eqref{swirl-11}, yields
\be \label{swirl-15-3}
\ba
\int_0^1 \left| \frac{d}{dr} ( r \widehat{v^\theta} )  \right|^2 \frac1r \, dr + \xi^2 \int_0^1 |\widehat{v^\theta}|^2 r\,dr
\leq  & C \frac{ \alpha  +1 }{ \alpha  } \left(\int_0^1 |\widehat{F^\theta}|^2 r  dr \right)^{\frac12}\left(\int_0^1| \widehat{v^\theta} |^2 r \, dr\right)^{\frac12}\\
\leq &  C \left( \frac{  \alpha  + 1 }{ \alpha  } \right)^{\frac43} (\Phi |\xi|)^{-\frac23} \int_0^1 |\widehat{F^\theta} |^2 r \, dr .
\ea
\ee
Multiplying the equation \eqref{swirl-2} by $r ( \mathcal{L} - \xi^2) \overline{\widehat{v^\theta}}$ and integrating over $[0, 1]$ give
\be \label{swirl-7}
- \int_0^1 | ( \mathcal{L} - \xi^2) \widehat{v^\theta} |^2 r \, dr
= \Re \int_0^1 \widehat{F^\theta} ( \mathcal{L} - \xi^2) \overline{\widehat{v^\theta}} r \, dr
-  \frac{4 \Phi}{\pi}\frac{\alpha }{4 + \alpha } \xi \Im \int_0^1 r \widehat{v^\theta} \frac{d}{dr} ( r \overline{\widehat{v^\theta}}) \, dr .
\ee
Combining  \eqref{swirl-15-2} and \eqref{swirl-15-3} shows
\be \nonumber \ba
 \left| \frac{4 \Phi}{\pi} \frac{\alpha}{4+ \alpha  } \xi \Im \int_0^1  r \widehat{v^\theta} \frac{d}{dr} ( r \overline{\widehat{v^\theta}} ) \, dr  \right|
\leq & C \frac{\alpha}{4+\alpha} \Phi |\xi|  \left( \int_0^1 |\widehat{v^\theta}|^2 r  \, dr \right)^{\frac12}
\left( \int_0^1 \left| \frac{d}{dr} ( r \widehat{v^\theta})  \right|^2 \frac1r \, dr   \right)^{\frac12}\\
\leq & C   \int_0^1 |\widehat{F^\theta} |^2 r \, dr.
\ea \ee
Hence, by Young's inequality, one has
\be \label{swirl-16-1}
\int_0^1 | ( \mathcal{L} - \xi^2) \widehat{v^\theta} |^2 r \, dr
\leq  C   \int_0^1 |\widehat{F^\theta} |^2 r \, dr .
\ee
This means
\be \label{swirl-16-2}
\|(\mathcal{L} + \partial_z^2 ) v^\theta\|_{L^2(\Omega)} \leq C \|F^\theta \|_{L^2(\Omega)}.
\ee

Note that $\Bv^\theta$ satisfies
\be \label{swirl-17}
\Delta \Bv^\theta = ( \mathcal{L} + \partial_z^2) v^\theta \Be_\theta \ \ \ \mbox{in} \ \Omega.
\ee
It follows from the trace theorem for axisymmetric functions and the estimates \eqref{swirl-12-1}  that
\be \nonumber
\|\Bv^\theta\|_{H^{\frac32}(\partial \Omega)} \leq C  \|\Bv^\theta\|_{H^1(\Omega)} + C \|\partial_z \Bv^\theta \|_{H^1(\Omega)}  \leq C \frac{ \alpha +1}{ \alpha } \|\BF^\theta \|_{L^2(\Omega)}.
\ee
Applying the regularity theory for elliptic equations (\hspace{1sp}\cite{ADN}) yields
\be \label{swirl-18}
\|\Bv^\theta\|_{H^2 (\Omega)}
\leq C \| ( \mathcal{L} + \partial_z^2) v^\theta\|_{L^2(\Omega)} + C \|\Bv^\theta\|_{H^{\frac32} (\partial \Omega)}
 \leq C \frac{ \alpha + 1 }{ \alpha  } \|\BF^\theta\|_{L^2(\Omega)}.
\ee

Furthermore, interpolation between \eqref{swirl-15-2} and \eqref{swirl-15-3} gives
\be \label{swirl-19}
\xi^2 \int_0^1 |\what{v^\theta} |^2 r \, dr
\leq C \frac{ \alpha  + 1 }{ \alpha  } \Phi^{-1} \int_0^1 |\what{F^\theta} |^2 r \, dr .
\ee
This implies
\be \label{swirl-20}
\|\partial_z \Bv^\theta\|_{L^2(\Omega)} \leq C \left( \frac{\alpha  +  1 }{\alpha }\right)^{\frac12} \Phi^{-\frac12} \|\BF^\theta\|_{L^2(\Omega)}.
\ee
Therefore, the proof of the proposition is completed.
\end{proof}

Combining the conclusions in Sections 3 and  4 completes the proof for Part (b) of Theorem \ref{thm1}.

{
\begin{remark}\label{remark-nonzeroalpha}
When $\alpha = 0$,  it follows from \eqref{swirl-4} that some compatibility condition on $F^\theta$ is required for the existence of $v^\theta$ (also see \cite{WX-Navier}). On the other hand,  when  $F^\theta = 0$ and $\alpha  =0$, the homogeneous problem \eqref{swirlsystem} has the nonzero solution $v^\theta = \beta r$. Therefore, it is reasonable that one fails to get uniform estimates with respect to $\alpha$, when $\alpha$ is close to $0$. Furthermore, in order to get a uniform estimate for $\Bv^\theta$ in $H^2$, one needs to prescribe much stronger conditions on $\BF^\theta$ comparing with the case for flows periodic in the axial direction.
\end{remark}
}

\section{Nonlinear Structural Stability}\label{secnonlinear}
In this section, we  prove the existence of solutions for the nonlinear problem \eqref{NS}-\eqref{flux}, which gives the uniform nonlinear structural stability of Poiseuille flows.
Let $\Bv=\Bu-\bBU$ denote the perturbed  velocity. Then $\Bv$ satisfies the nonlinear perturbation system
 \be  \label{perturb}
\left\{ \ba
&\bBU \cdot \nabla \Bv + \Bv \cdot \nabla \bBU + (\Bv \cdot \nabla )\Bv - \Delta \Bv + \nabla P = \BF \ \ \ \mbox{in}\ \Omega, \\
& {\rm div}~\Bv = 0\ \ \ \mbox{in}\ \Omega, \\
\ea
\right.
\ee
supplemented with the following boundary conditions
\begin{equation} \label{perturb2}
\Bv \cdot \Bn  = 0,\ \ \ \ \ \  2 \Bn \cdot D(\Bv) \cdot \Bt +  \alpha \Bv \cdot \Bt= 0 \ \ \ \mbox{on}\ \partial \Omega,
\end{equation}
and flux constraint
\begin{equation}\label{perturb3}
  \int_{\Sigma} \Bv \cdot \Bn \, dS = 0.
\end{equation}

\subsection{Existence and uniqueness for the problem without swirl velocity}
In this subsection, we prove the existence and uniqueness of strong axisymmetric solution when $\BF$ has no swirl. Before the proof, let us collect some estimates for solutions to the linearized problem.

\begin{lemma}\label{linearized-estimates-new}
Assume that $\BF= \BF(r, z) \in L^2(\Omega)$ and $\Phi \geq \Phi_0$ is sufficiently large.

(a) The unique solution $\Bv$ to the  linear problem \eqref{linearizedNS}-\eqref{fluxBC} satisfies
\be \label{9-1}
\|\Bv^*\|_{H^{\frac54}(\Omega)} \leq C_1 \Phi^{\frac{1}{16}}\|\BF^*\|_{L^2(\Omega)},
\ee
and
\be \label{9-2}
\|\Bv^r\|_{H^{\frac54}(\Omega)} \leq C_1 \Phi^{-\frac{23}{160}}\|\BF^*\|_{L^2(\Omega)},\ \ \ \ \ \ \|\partial_z \Bv^z \|_{H^{\frac14}(\Omega)} \leq C_1 \Phi^{-\frac{29}{112}}\|\BF^* \|_{L^2(\Omega)}.
\ee

(b) If $F^z = 0$, the solution $\Bv$ to \eqref{linearizedNS}-\eqref{fluxBC} satisfies
\be \label{9-3}
\|\Bv^* \|_{H^{\frac54}(\Omega)} \leq C_1 \Phi^{-\frac{3}{32}} \|F^r\|_{L^2(\Omega)}.
\ee

(c) If $\alpha \geq \alpha_0 > 0$, the solution $\Bv$ to \eqref{linearizedNS}-\eqref{fluxBC} satisfies
\be \label{9-6}
\|\Bv^\theta\|_{H^{\frac54}(\Omega)} \leq C_2\left( 1 + \frac{1}{\alpha_0} \right) \|F^\theta\|_{L^2(\Omega)}, \ \ \
\|\partial_z \Bv^\theta \|_{H^{\frac14}(\Omega)} \leq C_2 \left( 1 + \frac{1}{\alpha_0} \right)^{\frac58} \Phi^{-\frac38} \|F^\theta \|_{L^2(\Omega)}.
\ee
Here $C_1$, $C_2$ are constants independent of $\BF$, $ \Phi$, and $\alpha$.
\end{lemma}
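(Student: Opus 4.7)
The plan is to derive all three parts by applying the standard Sobolev interpolation inequality
\begin{equation*}
\|u\|_{H^s(\Omega)} \le C \|u\|_{H^{s_0}(\Omega)}^{1-\theta} \|u\|_{H^{s_1}(\Omega)}^{\theta}, \qquad s = (1-\theta) s_0 + \theta s_1,
\end{equation*}
to the endpoint estimates already furnished by Theorem \ref{thm1}. Parts (a) and (c) will then reduce to arithmetic bookkeeping, while part (b) is where a genuinely new ingredient must be supplied.

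For part (a), I would obtain \eqref{9-1} by interpolating $\|\Bv^*\|_{H^1}\le C\|\BF^*\|_{L^2}$ against $\|\Bv^*\|_{H^2}\le C(1+\Phi^{1/4})\|\BF^*\|_{L^2}$ at $s=5/4$ (so $\theta=1/4$), producing the exponent $\Phi^{(1/4)(1/4)}=\Phi^{1/16}$. The refined bound on $v^r$ in \eqref{9-2} follows by interpolating $\|v^r\|_{L^2}\le C\Phi^{-4/5}\|\BF^*\|_{L^2}$ against $\|v^r\|_{H^2}\le C\Phi^{1/4}\|\BF^*\|_{L^2}$ at $s=5/4$ ($\theta=5/8$), and the arithmetic $-\tfrac{3}{8}\cdot\tfrac{4}{5}+\tfrac{5}{8}\cdot\tfrac{1}{4}=-\tfrac{23}{160}$ matches the claim. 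The bound on $\partial_z v^z$ comes from interpolating $\|\partial_z v^z\|_{L^2}\le C\Phi^{-3/7}\|\BF^*\|_{L^2}$ against $\|\partial_z v^z\|_{H^1}\le\|\Bv^*\|_{H^2}\le C\Phi^{1/4}\|\BF^*\|_{L^2}$ at $s=1/4$ ($\theta=1/4$), giving $-\tfrac{3}{4}\cdot\tfrac{3}{7}+\tfrac{1}{4}\cdot\tfrac{1}{4}=-\tfrac{29}{112}$. For part (c) I would invoke Theorem \ref{thm1}(b): the first bound in \eqref{9-6} is immediate from $\|\cdot\|_{H^{5/4}}\le\|\cdot\|_{H^2}$, and the bound on $\partial_z\Bv^\theta$ in $H^{1/4}$ follows from interpolating $\|\partial_z\Bv^\theta\|_{L^2}\le C(1+1/\alpha_0)^{1/2}\Phi^{-1/2}\|F^\theta\|_{L^2}$ against $\|\partial_z\Bv^\theta\|_{H^1}\le C(1+1/\alpha_0)\|F^\theta\|_{L^2}$ at $s=1/4$, producing the factor $(1+1/\alpha_0)^{5/8}\Phi^{-3/8}$.

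The hard part will be part (b). Direct interpolation through Theorem \ref{thm1}(a) yields only the growth $\Phi^{1/16}$, which is far from the claimed decay $\Phi^{-3/32}$. What I would need is an auxiliary $L^2$ estimate of the form $\|\Bv^*\|_{L^2}\le C\Phi^{-2/3}\|F^r\|_{L^2}$ valid under the hypothesis $F^z=0$; once that is in hand, interpolation against $\|\Bv^*\|_{H^2}\le C\Phi^{1/4}\|F^r\|_{L^2}$ at $s=5/4$ produces $-\tfrac{3}{8}\cdot\tfrac{2}{3}+\tfrac{5}{8}\cdot\tfrac{1}{4}=-\tfrac{3}{32}$ exactly. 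To establish this improved $L^2$-bound, the plan is to revisit the stream-function analysis of Section \ref{sec-res} under $\widehat{F^z}\equiv 0$: the Fourier source then reduces to $\hat f=i\xi\widehat{F^r}$, and the extra factor of $\xi$ should yield additional decay in $\Phi$ in each frequency regime. In the low- and high-frequency regimes this amounts to re-running the energy arguments of Propositions \ref{Bpropcase1} and \ref{Bpropcase2} with the improved source (for the low-frequency case, one should absorb the $\xi$ gain directly in \eqref{3-8}); the intermediate-frequency regime is the most delicate, since the boundary-layer decompositions of Propositions \ref{Bpropcase3} and \ref{case4} must be adjusted to track the extra $\xi$. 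Together with the already-available radial bound $\|v^r\|_{L^2}\le C\Phi^{-4/5}\|F^r\|_{L^2}$, this would supply the missing $\|v^z\|_{L^2}$ estimate and close part (b).
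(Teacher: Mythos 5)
Your arithmetic for parts (a) and (c) is exactly the paper's argument: interpolate \eqref{estuniformlinear} against \eqref{estimatelinear} at $s=5/4$ for \eqref{9-1}, interpolate \eqref{estimatelinear-Philarge} against \eqref{estimatelinear} for \eqref{9-2}, and interpolate the two bounds of Proposition \ref{swirl} for \eqref{9-6}; the exponents you compute all check out.

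For part (b) you correctly identify both the missing ingredient $\|\Bv^*\|_{L^2}\le C\Phi^{-2/3}\|F^r\|_{L^2}$ and the final interpolation $\tfrac38\cdot(-\tfrac23)+\tfrac58\cdot\tfrac14=-\tfrac{3}{32}$, but your plan for obtaining the $L^2$ bound diverges from the paper's and is considerably heavier than necessary. You propose to re-run the entire frequency-regime analysis of Section \ref{sec-res} with $\widehat{F^z}\equiv 0$, including adjusting the boundary-layer constructions of Propositions \ref{Bpropcase3} and \ref{case4} to track the extra factor of $\xi$. The paper avoids all of that. Its observation is that the Rayleigh-type identity \eqref{3-7},
\[
 \xi \int_0^1 \frac{\bar{U}(r) }{ r } \left| \frac{d}{dr} ( r \hat{\psi} )   \right|^2 \, dr
+ \xi^3 \int_0^1  \bar{U}(r) | \hat{\psi}  |^2  r\, dr
 = - \Im \int_0^1 \hat{f } \overline{\hat{\psi}  }  r \, dr,
\]
holds for every $\xi$, with no regime splitting, and when $F^z=0$ the source reduces to $\hat f=i\xi\widehat{F^r}$ so the right-hand side is $-\Re\int_0^1 \xi\widehat{F^r}\overline{\hat\psi}\,r\,dr$; the factor of $\xi$ divides out and, since $\bar U(r)\gtrsim \Phi(1-r^2)$, one immediately gets the weighted bounds $\int_0^1 |\tfrac{d}{dr}(r\hat\psi)|^2 \tfrac{1-r^2}{r}\,dr \le C\Phi^{-2}\int_0^1|\widehat{F^r}|^2 r\,dr$ and $\Phi\xi^2\int_0^1(1-r^2)|\hat\psi|^2 r\,dr\le C\Phi^{-1}\int_0^1|\widehat{F^r}|^2 r\,dr$. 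These are then upgraded to unweighted $L^2$ bounds of order $\Phi^{-4/3}$ on $\|v^z\|_{L^2}^2$ and $\|v^r\|_{L^2}^2$ via the weighted interpolation Lemma \ref{weightinequality}, using only the already-established $\Phi$-uniform estimate $\int_0^1 |\mathcal{L}\hat\psi|^2 r\,dr + \xi^2\int_0^1 |\tfrac{d}{dr}(r\hat\psi)|^2\tfrac1r\,dr\le C\int_0^1|\widehat{\BF^*}|^2 r\,dr$ (which follows from Propositions \ref{Bpropcase1}--\ref{case5} across all regimes). So the boundary layers need not be touched. Your route is plausibly workable but requires redoing the most delicate part of Section \ref{sec-res}; the paper's route replaces all of that with one application of \eqref{3-7} and one of Lemma \ref{weightinequality}.
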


\begin{proof}
(a). The interpolation between \eqref{estuniformlinear} and \eqref{estimatelinear} yields \eqref{9-1}, and the interpolation between \eqref{estimatelinear} and \eqref{estimatelinear-Philarge} yields \eqref{9-2}.

(b). For every $\xi \in \mathbb{R}$, as proved in \eqref{3-7}, one has
\be \label{9-9}
\xi \int_0^1 \frac{\bar{U}(r)}{r} \left|\frac{d}{dr} ( r \hat{\psi})   \right|^2 \, dr   + \xi^3 \int_0^1 \bar{U}(r) |\hat{\psi}|^2 r \, dr
= -\Re \int_0^1 \xi \what{F^r} \overline{\hat{\psi}} r\, dr.
\ee
This, together with Lemma \ref{lemmaHLP} and \eqref{3-7-1}, gives
\be \nonumber  \ba
\Phi \int_0^1 \left| \frac{d}{dr} (r \hat{\psi})  \right|^2 \frac{1 - r^2 }{r } \, dr
& \leq C \int_0^1 \left| \what{F^r}  \right| |\hat{\psi}| r \, dr  \\
& \leq C \left( \int_0^1 |\what{F^r} |^2 r \, dr \right)^{\frac12} \left( \int_0^1 \left|\frac{d}{dr}(r \hat{\psi})  \right|^2 \frac{1- r^2 }{r} \, dr \right)^{\frac12} .
\ea \ee
Hence one has
\be \label{9-11}
\int_0^1 \left| \frac{d}{dr} (r \hat{\psi})  \right|^2 \frac{1 - r^2 }{r } \, dr
\leq C \Phi^{-2} \int_0^1 |\what{F^r}|^2r \, dr.
\ee

On the other hand, it follows from the estimates in Section 3 that
\be \label{9-11-1}
\int_0^1 |\mathcal{L} \hat{\psi}|^2 r \, dr + \xi^2 \int_0^1  \left| \frac{d}{dr} (r \hat{\psi})  \right|^2 \frac1r \, dr \leq C \int_0^1 |\what{F^r}|^2r \, dr.
\ee
Then by Lemma \ref{lemma1} and \ref{weightinequality},
\be \nonumber \ba
\int_0^1 \left| \frac{d}{dr}(r \hat{\psi})  \right|^2 \frac1r \, dr
& \leq C \left( \int_0^1 \left|  \frac{d}{dr}(r \hat{\psi}) \right|^2 \frac{1-r^2}{r} \, dr  \right)^{\frac23} \left( \int_0^1 |\mathcal{L} \hat{\psi}|^2 r \, dr  \right)^{\frac13} \\
& \leq C \Phi^{-\frac43} \int_0^1 |\what{F^r}|^2 r \, dr .
\ea \ee
This implies
\be \label{9-15}
\|v^z \|_{L^2(\Omega)}^2  = \int_{-\infty}^{+\infty} \int_0^1 \left| \frac{d}{dr}(r \hat{\psi} ) \right|^2 \frac1r \, dr d\xi \leq C \Phi^{-\frac43} \| F^r \|_{L^2(\Omega)}^2.
\ee

Similarly, the equality \eqref{9-9}, together with \eqref{9-11}, gives
\be \label{9-16}
\Phi \xi^2 \int_0^1 (1  - r^2) |\hat{\psi} |^2 r \, dr
\leq C \Phi^{-1} \int_0^1 |\what{F^r}|^2 r \, dr.
\ee
With the aid of Lemmas \ref{lemma1} and \ref{weightinequality}, it follows from \eqref{9-11-1} again that one has
\be \nonumber \ba
\xi^2 \int_0^1 |\hat{\psi}|^2 r \, dr
& \leq C \left( \xi^2 \int_0^1 (1  - r^2) |\hat{\psi}|^2 r \, dr  \right)^{\frac23} \left(  \xi^2 \int_0^1 \left|\frac{d}{dr}(r \hat{\psi})  \right|^2 \frac1r \, dr \right)^{\frac13} \\
& \leq C \Phi^{-\frac43} \int_0^1 |\what{F^r}|^2 r \, dr .
\ea
\ee
This implies
\be \label{9-18}
\| v^r \|_{L^2(\Omega)}^2 = \int_{-\infty}^{+ \infty} \int_0^1 \xi^2 |\hat{\psi}|^2 r \, dr d\xi \leq C \Phi^{-\frac43} \| F^r\|_{L^2(\Omega)}^2.
\ee
The estimate \eqref{9-15} together with \eqref{9-18} gives
\be \label{9-19}
\|\Bv^* \|_{L^2(\Omega)} \leq C \Phi^{-\frac23} \| F^r\|_{L^2(\Omega)}.
\ee
The interpolation between \eqref{9-19} and \eqref{estimatelinear} yields \eqref{9-3}.

The estimates in Part (c) are implied by Proposition \ref{swirl}.
\end{proof}

{Lemma \ref{linearized-estimates-new} shows that there are  better estimate for  $v^r$, $\partial_z v^z$, and $\partial_z v^\theta$ as $\Phi$ is large. On the other hand, each nonlinear term in \eqref{nonlinearperturb} contains either $v^r$ or $z$-derivatives of $\Bv$. These two key facts enable us to get the solutions to the nonlinear problem when $\Phi$ is large.  This is one of major new observations of this paper comparing with the study for flows periodic in the axial direction in \cite{WX-Navier}. }

For any given $\BF = \BF(r, z) \in L^2(\Omega)$, the linear problem \eqref{linearizedNS}-\eqref{fluxBC} admits a unique axisymmetric solution $\Bv$. We denote $\Bv= \mathcal{T} \BF$.

\begin{proof}[\bf{Proof of Theorem \ref{mainthm} Part (b).}] We use the iteration method to prove the existence. The proof is divided into $3$ steps.

{\it Step 1. Iteration Scheme.} Let $\Bv_0 = \mathcal{T} \BF$. Since $F^\theta = 0$, one has $v_0^\theta = 0$. For every $j \geq 0$,
let
$\Bv_{j+1}$ be the solution to the iteration problem
\be \label{10-1}
\Bv_{j+1} = \mathcal{T} \BF + \mathcal{T} ( F_j^r \Be_r + F_j^z \Be_z + F_j^\theta \Be_\theta ),
\ee
where
\be \nonumber
F_j^r = -\left( v^r_j \partial_r v^r_j + v^z_j \partial_z v^r_j    \right) + \frac{(v_j^\theta)^2 }{r},
\quad
F_j^z = - \left( v^r_j  \partial_r v^z_j + v^z_j \partial_z v^z_j    \right),
\ee
and
\be \nonumber
F_j^\theta = -(v^r_j \partial_r v^\theta_j  + v^z_j \partial_z v^\theta_j) - \frac{v^r_j v^\theta_j}{r}.
\ee

{\it Step 2. Mathematical induction and the existence of solution.}
Denote $\Bv^r = v^r \Be_r$, $\Bv^z = v^z \Be_z$ and $\Bv^\theta = v^\theta \Be_\theta$. Denote
\be \nonumber
\mathcal{J} = \left\{ \Bv = \Bv(r, z) \, \left|  \ba & v^\theta = 0,\ \ \ \ \ \|\Bv \|_{H^{\frac54}(\Omega)} \leq 2C_1 \Phi^{\frac{1}{16}} \|\BF\|_{L^2(\Omega)}, \\
& \|\Bv^r\|_{H^{\frac54}(\Omega)} + \|\partial_z \Bv^z\|_{H^{\frac14}( \Omega) }  \leq \Phi^{-\frac{1}{10} } \|\BF\|_{L^2(\Omega)}    \ea   \right. \right\},
\ee
where $C_1$ is the constant indicated in Lemma \ref{linearized-estimates-new}.

Assume that $\|\BF\|_{L^2(\Omega)} \leq \Phi^{\frac{1}{40}}$. According to Lemma \ref{linearized-estimates-new}, one has $\Bv_0 \in \mathcal{J}$. Assume that $\Bv_j \in \mathcal{J}$, we prove that $\Bv_{j+1} \in \mathcal{J}$. Since $F_j^\theta = 0$, it holds that
$v_{j+1}^\theta  = 0$. On the other hand,
by H\"older inequality and Sobolev embedding inequality, one has
\be \nonumber \ba
\|F_j^r \|_{L^2(\Omega)}  & \leq  \|v_j^r \partial_r v_j^r \|_{L^2(\Omega)} + \|v_j^z \partial_z v_j^r\|_{L^2(\Omega)} \\
& \leq C \|v_j^r \|_{L^{12}(\Omega)} \|\partial_r v_j^r \|_{L^{\frac{12}{5}} (\Omega)} + C \|v_j^z \|_{L^{12}(\Omega)} \|\partial_z v_j^r\|_{L^{\frac{12}{5}} (\Omega) } \\
& \leq C \|\Bv_j^r \|_{H^{\frac54}(\Omega)}^2 + C \|\Bv_j^z \|_{H^{\frac54}(\Omega)} \|\Bv_j^r \|_{H^{\frac54} (\Omega) } \\
& \leq C \Phi^{-\frac15} \|\BF\|_{L^2(\Omega)}^2 + C \Phi^{-\frac{3}{80}} \|\BF\|_{L^2(\Omega)}^2 \\
& \leq C \Phi^{-\frac{1}{80}} \|\BF\|_{L^2(\Omega)}
\ea
\ee
and
\be \nonumber
\ba
\|F_j^z \|_{L^2(\Omega)} & \leq \|v_j^r \partial_r v_j^z \|_{L^2(\Omega)} + \|v_j^z \partial_z v_j^z \|_{L^2(\Omega)} \\
& \leq C \|\Bv_j^r \|_{H^{\frac54}(\Omega)} \|\Bv_j^z \|_{H^{\frac54}(\Omega)} + C \|\Bv_j^z \|_{H^{\frac54}(\Omega)} \|\partial_z \Bv_j^z \|_{H^{\frac14}(\Omega)} \\
& \leq C \Phi^{-\frac{1}{10}}\Phi^{\frac{1}{16}} \|\BF\|_{L^2(\Omega)}^2 + C \Phi^{\frac{1}{16}} \Phi^{-\frac{1}{10}} \|\BF\|_{L^2(\Omega)}^2 \\
& \leq C \Phi^{-\frac{1}{80}} \|\BF\|_{L^2(\Omega)}.
\ea
\ee

By virtue of Lemma \ref{linearized-estimates-new}, it holds that
\be \label{10-5}
\|\Bv_{j+1} \|_{H^{\frac54}(\Omega)} \leq C_1 \left(  \Phi^{\frac{1}{16}} \|\BF\|_{L^2(\Omega)} +  C \Phi^{-\frac{1}{80}}\|\BF\|_{L^2(\Omega)} \right),
\ee
\be \label{10-6}
\|\Bv_{j+1}^r \|_{H^{\frac54}(\Omega)} \leq C_1 \Phi^{-\frac{23}{160}} \left( \|\BF\|_{L^2(\Omega)} + C \Phi^{-\frac{1}{80}} \|\BF\|_{L^2(\Omega)}      \right),
\ee
and
\be  \label{10-7}
\| \partial_z \Bv_{j+1}^z  \|_{H^{\frac14}(\Omega)} \leq C_1 \Phi^{-\frac{29}{112}} \left( \|\BF\|_{L^2(\Omega)} + C \Phi^{-\frac{1}{80}} \|\BF\|_{L^2(\Omega)}      \right).
\ee
Hence there exists a constant $\Phi_1$ such that if $\Phi \geq \Phi_1$, the solution $\Bv_{j+1} \in \mathcal{J}$. By mathematical induction, $\Bv_j \in \mathcal{J}$ for every $j \in \mathbb{N}$. Due to the uniform bound for $\Bv_j$, there exists a  subsequence which is still denoted by  $\{ \Bv_j \}$ and converges to $ \Bv\in \mathcal{J}$ weakly in $H^{\frac54}(\Omega)$. Furthermore, it holds that
\be \nonumber
\|\Bv\|_{H^{\frac54} (\Omega)} \leq 2C_1 \Phi^{\frac{1}{16}} \|\BF\|_{L^2(\Omega)} \quad \text{and}\ \ \ \
 \|\Bv^r \|_{H^{\frac54}(\Omega)} + \|\partial_z \Bv^z \|_{H^{\frac14}(\Omega)} \leq \Phi^{-\frac{1}{10}} \|\BF\|_{L^2(\Omega)}.
\ee
In fact, by virtue of Theorem \ref{thm1}, one has
\be \nonumber
\|\Bv_{j+1}\|_{H^2(\Omega)} \leq C \Phi^{\frac14} \|\BF\|_{L^2(\Omega)} + C \Phi^{\frac14} \|  F_j^r \Be_r + F_j^z \Be_z \|_{L^2(\Omega)}
\leq C \Phi^{\frac14} \|\BF\|_{L^2(\Omega)}
\ee
and
\be \label{10-16}
\|\Bv_{j+1} \|_{H^1(\Omega)} \leq C \|\BF\|_{L^2(\Omega)} + C \|F_j^r\Be_r + F_j^z \Be_z \|_{L^2(\Omega)} \leq C \|\BF\|_{L^2(\Omega)}.
\ee
Hence it holds that
\be \nonumber
\|\Bv\|_{H^2(\Omega)} \leq C \Phi^{\frac14} \|\BF\|_{L^2(\Omega)},\ \ \ \ \ \|\Bv\|_{H^1(\Omega)} \leq C \|\BF\|_{L^2(\Omega)}.
\ee

On the other hand, since $\Bv_{j+1}$ is a solution to \eqref{10-1}, one can verify that
\be \label{10-11}
\curl~\left[ (\bBU \cdot \nabla )\Bv_{j+1} + ( \Bv_{j+1} \cdot \nabla ) \bBU - \Delta \Bv_{j+1} + ( \Bv_j \cdot \nabla ) \Bv_j \right] = \curl~\BF\ \ \ \mbox{in}\ \Omega.
\ee
Taking the limit of the equation \eqref{10-11} yields
\be \nonumber
\curl~ \left[(\bBU \cdot \nabla ) \Bv + (\Bv \cdot \nabla )\bBU - \Delta \Bv + (\Bv\cdot \nabla ) \Bv \right] = \curl~ \BF\ \ \ \mbox{in}\ \Omega.
\ee
Due to the fact that $\Bv \in H^2(\Omega)$ and $\BF \in L^2(\Omega)$, there exists a function $P$ with $\nabla P \in L^2(\Omega)$, such that
\be \label{10-12}
(\bBU \cdot \nabla ) \Bv + (\Bv \cdot \nabla )\bBU - \Delta \Bv + (\Bv\cdot \nabla ) \Bv + \nabla P = \BF\ \ \ \mbox{in}\ \Omega.
\ee

{\it Step 3. Uniqueness. } Suppose that $\Bv$, $\tilde{\Bv} \in \mathcal{J}$ are two solutions of the nonlinear problem \eqref{perturb}-\eqref{perturb3}. Then
\be \nonumber
\Bv - \tilde{\Bv} = \mathcal{T} \tilde{\BF},
\ee
where
\be \nonumber \ba
\tilde{\BF} = \tilde{F}^r \Be_r + \tilde{F}^z \Be_z =& - [(v^r \partial_r v^r + v^z \partial_z v^r ) - (\tilde{v}^r \partial_r \tilde{v}^r + \tilde{v}^z \partial_z \tilde{v}^r) ]\Be_r  \\
&\ \ \ \ \ \ -  [(v^r \partial_r v^z + v^z \partial_z v^z ) - (\tilde{v}^r \partial_r \tilde{v}^z + \tilde{v}^z \partial_z \tilde{v}^z )]\Be_z .
\ea
\ee
Note that
\be \nonumber \ba
\|\tilde{F}^r \|_{L^2(\Omega)} & \leq \|( v^r - \tilde{v}^r) \partial_r v^r  \|_{L^2(\Omega)} + \|\tilde{v}^r (\partial_r v^r - \partial_r \tilde{v}^r ) \|_{L^2(\Omega)} \\
& \ \ \ \ \ \ \ + \| (v^z - \tilde{v}^z ) \partial_z v^r \|_{L^2(\Omega)} + \|\tilde{v}^z (\partial_z v^r - \partial_z \tilde{v}^r) \|_{L^2(\Omega)} \\
& \leq C \|\Bv^r - \tilde{\Bv}^r \|_{H^{\frac54}(\Omega)} \left( \|\Bv^r \|_{H^{\frac54}(\Omega)} + \|\tilde{\Bv}^r \|_{H^{\frac54}(\Omega)} + \|\tilde{\Bv}^z \|_{H^{\frac54}(\Omega)} \right) \\
&\ \ \ \ \ \ + C \|\Bv^z - \tilde{\Bv}^z \|_{H^{\frac54}(\Omega)} \|\Bv^r \|_{H^{\frac54}(\Omega)} \\
& \leq C \Phi^{\frac{7}{80} } \|\Bv^r - \tilde{\Bv}^r \|_{H^{\frac54} (\Omega)} + C \Phi^{-\frac{3}{40}}\|\Bv^z - \tilde{\Bv}^z \|_{H^{\frac54}(\Omega)}
\ea \ee
and
\be \nonumber
\ba
\|\tilde{F}^z \|_{L^2(\Omega)} & \leq \| (v^r - \tilde{v}^r ) \partial_r v^z \|_{L^2(\Omega)} + \|\tilde{v}^r (\partial_r v^z - \partial_r \tilde{v}^z ) \|_{L^2(\Omega)} \\
& \ \ \ \ \ \ + \|(v^z - \tilde{v}^z ) \partial_z v^z \|_{L^2(\Omega)} + \|\tilde{v}^z (\partial_z v^z - \partial_z \tilde{v}^z \|_{L^2(\Omega)} \\
& \leq C \|\Bv^r - \tilde{\Bv}^r \|_{H^{\frac54} (\Omega)} \|\Bv^z \|_{H^{\frac54}(\Omega)}  + C \|\partial_z \Bv^z - \partial_z \tilde{\Bv}^z \|_{H^{\frac14}(\Omega)} \|\tilde{\Bv}^z  \|_{H^{\frac54} (\Omega)} \\
&\ \ \ \ \ + C \|\Bv^z - \tilde{\Bv}^z \|_{H^{\frac54}(\Omega)} \left( \|\tilde{\Bv}^r \|_{H^{\frac54}(\Omega)} + \| \partial_z v^z \|_{H^{\frac14} (\Omega)} \right) \\
& \leq C \Phi^{\frac{7}{80} } \left( \| \Bv^r - \tilde{\Bv}^r \|_{H^{\frac54}(\Omega)} +  \|\partial_z \Bv^z - \partial_z \tilde{\Bv}^z \|_{H^{\frac14}(\Omega)} \right) + C \Phi^{-\frac{3}{40}} \|\Bv^z - \tilde{\Bv}^z \|_{H^{\frac54}(\Omega)}.
\ea
\ee
By virtue of Lemma \ref{linearized-estimates-new},
\be \nonumber \ba
& \Phi^{\frac15} \|\Bv^r - \tilde{\Bv}^r \|_{H^{\frac54} (\Omega)} + \|\Bv^z - \tilde{\Bv}^z \|_{H^{\frac54} (\Omega)} +
\Phi^{\frac15} \|\partial_z \Bv^z - \partial_z \tilde{\Bv}^z \|_{H^{\frac14} (\Omega) } \\
\leq & C ( \Phi^{\frac15} \Phi^{-\frac{23}{160} } + \Phi^{\frac{1}{16} } + \Phi^{\frac15} \Phi^{-\frac{29}{112}} ) \left(  \Phi^{\frac{7}{80}- \frac{1}{5} } \Phi^{\frac15} \|\Bv^r - \tilde{\Bv}^r \|_{H^{\frac54}(\Omega)} \right.  \\
 &\ \ \ \ \ \ \ \ \ \ \ \left. + \Phi^{-\frac{3}{40} } \| \Bv^z - \tilde{\Bv}^z \|_{H^{\frac54}(\Omega)}
 +  \Phi^{ \frac{7}{80} -\frac15 } \Phi^{\frac15} \|\partial_z \Bv^z - \partial_z \tilde{\Bv}^z \|_{H^{\frac14} (\Omega)} \right)\\
 \leq & C \Phi^{- \frac{1}{20}} \left( \Phi^{\frac15} \|\Bv^r - \tilde{\Bv}^r \|_{H^{\frac54}(\Omega)} + \Phi^{\frac15} \|\partial_z \Bv^z - \partial_z \tilde{\Bv}^z \|_{H^{\frac14} (\Omega)} \right) + C \Phi^{-\frac{1}{80}}\| \Bv^z - \tilde{\Bv}^z \|_{H^{\frac54}(\Omega)}.
\ea \ee
 This implies  $\Bv^r= \tilde{\Bv}^r$ and  $\Bv^z = \tilde{\Bv}^z$ when $\Phi$ is large enough.  Hence the uniqueness is proved. This finishes the proof for part (b) of  Theorem \ref{mainthm}.

{\bf Proof  of Theorem \ref{mainthm} Part (a).}  Without loss of generality, it suffices to consider the case with $\Phi \leq \Phi_1$. Suppose that $\Bv$ is a solution to the nonlinear perturbation system \eqref{perturb}-\eqref{perturb3}, $\Bv$ is a solution to the following equation,
\be \label{10-21}
\Bv = \mathcal{T}\BF - \mathcal{T} [(\Bv \cdot \nabla )\Bv].
\ee
For every $\Bv, \Bw \in H^{\frac54}(\Omega)$, one has
\be \nonumber
\|(\Bv \cdot \nabla ) \Bw\|_{L^2(\Omega)} \leq C \|\Bv\|_{L^{12}(\Omega)} \|\nabla \Bw \|_{L^{\frac{12}{5}} (\Omega)} \leq C \|\Bv\|_{H^{\frac54}(\Omega)} \|\Bw\|_{H^{\frac54}(\Omega)} .
\ee
This, together with Theorem \ref{thm1}, yields
\be \nonumber
\|\mathcal{T} (\Bv\cdot \nabla )\Bw \|_{H^{\frac54}(\Omega)}  \leq C \|\Bv\|_{H^{\frac54}(\Omega)} \|\Bw\|_{H^{\frac54}(\Omega)}.
\ee
Hence, by Lemma \ref{nonlinear} (fixed point theorem), there exists a small constant $\epsilon_0$ such that if $\|\BF\|_{L^2(\Omega)} \leq \epsilon_0$, the equation
\eqref{10-21} has a unique solution $\Bv$ satisfying
\be \nonumber
\|\Bv\|_{H^{\frac54}(\Omega)} \leq C \|\BF\|_{L^2(\Omega)} \leq C \epsilon_0.
\ee
Moreover, according to Theorem \ref{thm1}, one has
\be \nonumber
\|\Bv\|_{H^2 (\Omega) } \leq C (1 + \Phi^{\frac14} ) ( \|\BF\|_{L^2(\Omega)} + \|( \Bv\cdot \nabla )\Bv\|_{L^2(\Omega)} )
\leq C (1 + \Phi^{\frac14} ) \|\BF\|_{L^2(\Omega)}.
\ee
This finished the proof of Theorem \ref{mainthm}.
\end{proof}

\subsection{Existence and Uniqueness of solution for the problem with swirl velocity}
In this subsection, we prove the existence and uniqueness of strong axisymmetric solutions when $\BF$ has nonzero swirl component. The proof is in the same spirit as that in Subsection 5.1, whereas we assume that $\alpha \geq \alpha_0 > 0$, due to the reason stated in Remark \ref{remark-nonzeroalpha}.
\begin{proof}[\bf{Proof of Theorem \ref{mainthm2} Part (b).}]
As in the proof of Theorem \ref{mainthm}, we also use the iteration method to prove the existence.

{\it Step 1. Iteration scheme.}\ Given $\BF = \BF(r, z) \in L^2(\Omega)$. As in the proof of Theorem \ref{mainthm}, let
\be \label{10-31}
\Bv_0 = \mathcal{T} \BF \ \ \ \ \mbox{and}\ \ \ \ \Bv_{j+1} = \mathcal{T}\BF + \mathcal{T} ( F_j^r \Be_r + F_j^z \Be_z + F_j^\theta \Be_\theta) .
\ee

{\it Step 2. Mathematical induction and the existence of solution.}
Set
\be \nonumber
\mathcal{K } = \left\{  \Bv(r, z) \left|  \ba & \|\Bv^* \|_{H^{\frac54}(\Omega)}
\leq 2C_1 \Phi^{\frac{1}{16} } \|\BF\|_{L^2(\Omega)} ,\ \ \ \ \|\Bv^\theta \|_{H^{\frac54}(\Omega)} \leq 2C_2 \left( 1 + \frac{1}{\alpha_0} \right) \|\BF \|_{L^2(\Omega)}.\\
& \|\Bv^r \|_{H^{\frac54}(\Omega)} + \|\partial_z \Bv^z \|_{H^{\frac14}(\Omega)} \leq \Phi^{-\frac{1}{10}} \|\BF\|_{L^2(\Omega)} ,
\ \|\partial_z \Bv^\theta \|_{H^{\frac14}(\Omega)} \leq \Phi^{-\frac14} \| \BF\|_{L^2(\Omega)}.
\ea             \right. \right\}.
\ee

Assume that $\|\BF\|_{L^2(\Omega)} \leq \Phi^{\frac{1}{40}}$. According to Lemma \ref{linearized-estimates-new}, one has $\Bv_0 \in \mathcal{K}$. Assume that $\Bv_j \in \mathcal{K}$, we prove that $\Bv_{j+1} \in \mathcal{K}$.
By H\"older inequality and Sobolev embedding inequality, one has
\be \nonumber
\|F_j^z \|_{L^2(\Omega)} \leq \|v_j^r \partial_r v_j^z  \|_{L^2(\Omega)} + \|v_j^z \partial_z v_j^z \|_{L^2(\Omega)} \leq C \Phi^{-\frac{1}{80}} \|\BF\|_{L^2(\Omega)},
\ee
\be \nonumber
\ba
\|F_j^r \|_{L^2(\Omega)} & \leq   \|v_j^r \|_{L^{12}(\Omega)} \|\partial_r v_j^r \|_{L^{\frac{12}{5}} (\Omega)} +
 \| v_j^z \|_{L^{12}(\Omega)} \| \partial_z v_j^r \|_{L^{\frac{12}{5}}(\Omega)} +  \|v_j^\theta\|_{L^{12}(\Omega)} \left\|  \frac{v_j^\theta}{r} \right\|_{L^{\frac{12}{5}} (\Omega)} \\
& \leq C \|\Bv_j^r\|_{H^{\frac54}(\Omega)}^2 + C \|\Bv_j^z \|_{H^{\frac54}(\Omega)}\|\Bv_j^r \|_{H^{\frac54}(\Omega)}
+ C \|\Bv_j^\theta\|_{H^{\frac54}(\Omega)}^2 \\
& \leq C \Phi^{-\frac15} \|\BF\|_{L^2(\Omega)}^2 + C \Phi^{-\frac{3}{80}} \|\BF\|_{L^2(\Omega)}^2 + C \| \BF\|_{L^2(\Omega)}^2 \\
& \leq C \Phi^{\frac{1}{40}} \|\BF\|_{L^2(\Omega)},
\ea
\ee
and
\be \nonumber \ba
\|F_j^\theta\|_{L^2(\Omega)}&  \leq \|v_j^r \|_{L^{12}(\Omega)} \|\partial_r v_j^\theta \|_{L^{\frac{12}{5}} (\Omega)}
+ \|v_j^z \|_{L^{12}(\Omega)} \| \partial_z v_j^\theta \|_{\frac{12}{5} (\Omega) } + \|v_j^r \|_{L^{12}(\Omega)} \left\| \frac{v_j^\theta}{r} \right\|_{L^{\frac{12}{5}} (\Omega)}\\
&\leq C \|\Bv_j^r \|_{H^{\frac54}(\Omega)} \|\Bv_j^\theta \|_{H^{\frac54} (\Omega) } +
C \|\Bv_j^z \|_{H^{\frac54} (\Omega) } \|\partial_z \Bv_j^\theta \|_{H^{\frac14} (\Omega)} \\
& \leq C \Phi^{-\frac{1}{10}} \|\BF\|_{L^2(\Omega)}^2 + C \Phi^{\frac{1}{16}} \Phi^{-\frac14} \|\BF\|_{L^2(\Omega)}^2 \\
& \leq C \Phi^{-\frac{3}{40}} \|\BF\|_{L^2(\Omega)}.
\ea
\ee
Hence by virtue of Lemma \ref{linearized-estimates-new}, it holds that
\be \nonumber  \ba
\|\Bv_{j+1}^* \|_{H^{\frac54}(\Omega)}
&  \leq  C_1 \Phi^{-\frac{3}{32}}
\left( \| F^r \|_{L^2(\Omega)} + C \Phi^{\frac{1}{40}} \|\BF\|_{L^2(\Omega)} \right) \\
 &\ \ \ \ \ \ \ + C_1 \Phi^{\frac{1}{16}} \left( \|F^z\|_{L^2(\Omega)} + C \Phi^{-\frac{1}{80}} \|\BF\|_{L^2(\Omega)} \right) \\
 & \leq C_1 \Phi^{\frac{1}{16}} \|\BF\|_{L^2(\Omega)} + C \Phi^{-\frac{11}{160}} \|\BF\|_{L^2(\Omega)}
\ea
\ee
and
\be \nonumber
\|\Bv_{j+1}^\theta \|_{H^{\frac54} (\Omega)}
\leq C_2 \left( 1  + \frac{1}{\alpha_0} \right) \left(  \|F^\theta \|_{L^2(\Omega)} + C \Phi^{-\frac{3}{40}} \| \BF\|_{L^2(\Omega)}                \right).
\ee
Furthermore, one has
\be \nonumber
\|\Bv_{j+1}^r \|_{H^{\frac54} (\Omega)}
\leq C_1 C \Phi^{-\frac{23}{160} }   \Phi^{\frac{1}{40}} \|\BF\|_{L^2(\Omega)},\quad
\|\partial_z \Bv_{j+1}^z \|_{H^{\frac14}(\Omega)} \leq C_1 C \Phi^{-\frac{29}{112}} \Phi^{\frac{1}{40}} \|\BF\|_{L^2(\Omega)},
\ee
and
\be \nonumber
\|\partial_z \Bv_{j+1}^\theta \|_{H^{\frac14}(\Omega) } \leq C_2 \left( 1 + \frac{1}{\alpha_0} \right)^{\frac58} C\Phi^{-\frac38} \| \BF\|_{L^2(\Omega)}.
\ee
Hence there exists a sufficiently large constant $\Phi_2$ such that if $\Phi \geq \Phi_2$, then the solution $\Bv_{j+1} \in \mathcal{K}$. By mathematical induction, $\Bv_j \in \mathcal{K}$ for every $j \in \mathbb{N}$. And there exists a subsequence which is still denoted by  $\Bv_j$ and converges to $\Bv \in \mathcal{K}$ weakly in $H^{\frac54}(\Omega)$. Following the
same lines as in the proof of Theorem 1.2, one can prove that $\Bv$ is the desired solution.

{\it Step 3. Uniqueness.}\ Suppose that $\Bv, \tilde{\Bv}\in \mathcal{K}$ are two solutions of the nonlinear perturbation problem \eqref{perturb}-\eqref{perturb3}. Then $\Bv - \tilde{\Bv} = \mathcal{T} \tilde{\BF}$ where
$
\tilde{\BF} = \tilde{F}^r \Be_r + \tilde{F}^z \Be_z + \tilde{F}^\theta \Be_\theta,
$
with
\be \nonumber
\tilde{F}^r = -\left( v^r \partial_r v^r + v^z \partial_z v^z - \frac{(v^\theta)^2 }{r} \right) + \left( \tilde{v}^r \partial_r \tilde{v}^r
 + \tilde{v}^z \partial_z \tilde{v}^z - \frac{(\tilde{v}^\theta )^2}{r} \right),
\ee
\be \nonumber
\tilde{F}^z = - (v^r \partial_r v^z + v^z \partial_z v^z ) + (\tilde{v}^r \partial_r \tilde{v}^z + \tilde{v}^z \partial_z \tilde{v}^z ) ,
\ee
and
\be \nonumber
\tilde{F}^\theta = -\left(v^r \partial_r v^\theta + v^z \partial_z v^\theta + \frac{v^r v^\theta}{r} \right) + \left( \tilde{v}^r \partial_r \tilde{v}^\theta + \tilde{v}^z \partial_z \tilde{v}^\theta + \frac{\tilde{v}^r \tilde{v}^\theta }{r }  \right)
\ee
Furthermore, one has
\be \nonumber
\|\tilde{F}^r \|_{L^2(\Omega)} \leq C \Phi^{\frac{7}{80} } \| \Bv^r - \tilde{\Bv}^r \|_{H^{\frac54}(\Omega) } +
C \Phi^{-\frac{3}{40} } \|\Bv^z - \tilde{\Bv}^z \|_{H^{\frac54}(\Omega)} + C \Phi^{\frac{1}{40}} \|\Bv^\theta - \tilde{\Bv}^\theta \|_{H^{\frac54}(\Omega)},
\ee
\be \nonumber
\|\tilde{F}^z \|_{L^2(\Omega)} \leq C \Phi^{\frac{7}{80}} ( \|\Bv^r - \tilde{\Bv}^r \|_{H^{\frac54}(\Omega) } + \|\partial_z \Bv^z -\partial_z \tilde{\Bv}^z \|_{H^{\frac14}(\Omega)})  + C \Phi^{-\frac{3}{40}} \|\Bv^z - \tilde{\Bv}^z \|_{H^{\frac54}(\Omega)},
\ee
and
\be \nonumber \ba
\|\tilde{F}^\theta \|_{L^2(\Omega)} \leq & C \Phi^{\frac{1}{40}} \|\Bv^r -\tilde{\Bv}^r \|_{H^{\frac54}(\Omega)} + C \Phi^{-\frac{3}{40}} \|\Bv^\theta
- \tilde{\Bv}^\theta \|_{H^{\frac54}(\Omega)} \\
&\ \ \  + C \Phi^{-\frac{9}{40}} \|\Bv^z - \tilde{\Bv}^z \|_{H^{\frac54}(\Omega)} + C \Phi^{\frac{7}{80} }
\|\partial_z \Bv^\theta - \partial_z \tilde{\Bv}^\theta \|_{H^{\frac14} (\Omega) }.
\ea
\ee
Hence by virtue of Lemma \ref{linearized-estimates-new}, it holds that
\be \nonumber
\ba
& \Phi^{\frac15} \|\Bv^r - \tilde{\Bv}^r \|_{H^{\frac54}(\Omega)} + \|\Bv^z - \tilde{\Bv}^z \|_{H^{\frac54} (\Omega)} +
\Phi^{\frac15} \|\partial_z \Bv^z - \partial_z \tilde{\Bv}^z \|_{H^{\frac14} (\Omega) } \\
&\ \ \ \ \ \ \ + \Phi^{\frac{1}{10}} \|\Bv^\theta - \tilde{\Bv}^\theta \|_{H^{\frac54}(\Omega)} + \Phi^{\frac{1}{5}}\|\partial_z \Bv^\theta - \partial_z \tilde{\Bv}^\theta \|_{H^{\frac14}(\Omega)} \\
\leq & C  \left( C_1 \Phi^{\frac15} \Phi^{-\frac{23}{160}} + C_1 \Phi^{\frac{1}{16}} + C_1 \Phi^{\frac15 - \frac{29}{112}}   \right) \left( \Phi^{\frac{7}{80} - \frac15 } \Phi^{\frac15} \|\Bv^r - \tilde{\Bv}^r \|_{ H^{\frac54}(\Omega)} \right.\\
&\  \left. +  \Phi^{-\frac{3}{40}} \|\Bv^z - \tilde{\Bv}^z \|_{H^{\frac54}(\Omega)} +  \Phi^{\frac{7}{80}- \frac15 }\Phi^{\frac15}
\|\partial_z \Bv^z - \partial_z \tilde{\Bv}^z \|_{H^{\frac14}(\Omega)} +  \Phi^{\frac{1}{40}- \frac{1}{10}} \Phi^{\frac{1}{10}} \|\Bv^\theta -
\tilde{\Bv}^\theta \|_{H^{\frac54}(\Omega)}\right) \\
& + C \left(C_2 \Phi^{\frac{1}{10}} +C_2 \Phi^{-\frac38 + \frac15}\right) \left( \Phi^{\frac{1}{40}- \frac15} \Phi^{\frac15} \|\Bv^r - \tilde{\Bv}^r \|_{H^{\frac54}(\Omega)} + \Phi^{-\frac{3}{40} - \frac{1}{10} } \Phi^{\frac{1}{10}}  \|\Bv^\theta - \tilde{\Bv}^\theta \|_{H^{\frac54}(\Omega)}        \right.  \\
&  \ \ \ \ \left.+ \Phi^{-\frac{9}{40}} \|\Bv^z - \tilde{\Bv}^z \|_{H^{\frac54}(\Omega)} + \Phi^{\frac{7}{80} - \frac15} \Phi^{\frac15} \|\partial_z \Bv^\theta - \partial_z \tilde{\Bv}^\theta \|_{H^{\frac14}(\Omega)} \right)\\
\leq & C \Phi^{-\frac{1}{20}} \Phi^{\frac15} \|\Bv^r - \tilde{\Bv}^r \|_{ H^{\frac54}(\Omega)} +
C \Phi^{-\frac{1}{80}} \|\Bv^z - \tilde{\Bv}^z \|_{H^{\frac54}(\Omega)} + C \Phi^{-\frac{1}{20}} \Phi^{\frac15} \|\partial_z \Bv^z - \partial_z \tilde{\Bv}^z \|_{H^{\frac14} (\Omega) }\\
&\ \ \ \ \ \ \ + C \Phi^{-\frac{1}{80}} \Phi^{\frac{1}{10}} \|\Bv^\theta - \tilde{\Bv}^\theta \|_{H^{\frac54}(\Omega)} + C \Phi^{-\frac{1}{80}}\Phi^{\frac{1}{5}}\|\partial_z \Bv^\theta - \partial_z \tilde{\Bv}^\theta \|_{H^{\frac14}(\Omega)}.
\ea
\ee
When $\Phi \geq \Phi_2$ is large enough, one has $\Bv = \tilde{\Bv}$. Therefore, the uniqueness is proved.

 The proof for part (a) of Theorem \ref{mainthm2} is almost the same as that of Theorem \ref{mainthm}, so we omit the details.
\end{proof}



\appendix
\section{Some elementary lemmas}
In this appendix, some elementary lemmas are collected together.  They play important roles in the paper and might be useful elsewhere. The proofs of these lemmas can be referred to  \cite{WX1} and are omitted here.

The first one is the following Poincar\'e type inequalities.
\begin{lemma}\label{lemma1}
For a  function $g\in C^2([0,1])$
it holds that
\be \label{2-1-11}
\int_0^1 |g |^2  r \, dr \leq  \int_0^1 \left|   \frac{d}{dr} (r g )     \right|^2 \frac1r \, dr.
\ee
If, in addition, $g(0)=g(1)=0$, then one has
\be
  \int_0^1 \left|   \frac{d}{dr} (r g )     \right|^2 \frac1r \, dr
  \leq \left( \int_0^1 | \mathcal{L}g|^2  r \, dr  \right)^{\frac12} \left( \int_0^1 |g|^2 r\, dr  \right)^{\frac12}
\leq  \int_0^1 | \mathcal{L}g|^2  r \, dr.
\ee
\end{lemma}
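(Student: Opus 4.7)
The plan is to prove both estimates by substituting $h(r) = rg(r)$, which rewrites the operator as $\mathcal{L}g = (h'/r)'$ and reduces the lemma to one-dimensional Cauchy--Schwarz together with a single integration by parts. The axis regularity of $g$ (or, equivalently, finiteness of the right-hand side of \eqref{2-1-11}) forces $h(0)=0$, which is the structural property that drives both inequalities.

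First I would set $h = rg$ and write $h(r) = \int_0^r h'(s)\,ds$; Cauchy--Schwarz gives $h(r)^2 \leq r \int_0^r h'(s)^2\,ds$. Dividing by $r$, integrating in $r \in [0,1]$ and swapping the order of integration then yields
\[
\int_0^1 |g|^2 r\,dr = \int_0^1 \frac{h(r)^2}{r}\,dr \leq \int_0^1 (1-s)\,h'(s)^2\,ds \leq \int_0^1 \frac{h'(s)^2}{s}\,ds,
\]
where the last step uses $(1-s) \leq 1 \leq 1/s$ on $(0,1]$. This is exactly the first inequality in the lemma.

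For the second estimate I would exploit the factorization $\mathcal{L}g = (h'/r)'$. Under $g(0) = g(1) = 0$ we also have $h(0) = h(1) = 0$, so integration by parts gives
\[
\int_0^1 \left|\frac{d}{dr}(rg)\right|^2 \frac{1}{r}\,dr = \int_0^1 h' \cdot \frac{h'}{r}\,dr = \bigl[g\,h'\bigr]_0^1 - \int_0^1 h\,\mathcal{L}g\,dr = -\int_0^1 r g\,\mathcal{L}g\,dr,
\]
the boundary term vanishing from $g(0) = g(1) = 0$. Cauchy--Schwarz on the right produces the middle quantity in \eqref{2-1-11}; combining with the first inequality then absorbs the factor $\bigl(\int_0^1 g^2\, r\,dr\bigr)^{1/2}$ into the left-hand side and yields the final upper bound $\int_0^1 |\mathcal{L}g|^2 r\,dr$.

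No serious obstacle is anticipated: the whole argument is a pair of Cauchy--Schwarz applications and one integration by parts. The only mild subtlety is the behaviour at $r=0$, which I would handle by observing that if $g(0) \neq 0$ then $|(rg)'|^2/r \sim g(0)^2/r$ fails to be integrable near the axis, so the first inequality is then trivial with an infinite right-hand side; on the genuine side of the inequality we always have $h(0) = 0$ and the boundary integration by parts is legitimate.
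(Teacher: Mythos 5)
Your argument is correct. The paper itself does not reproduce a proof of Lemma~\ref{lemma1} (it refers the reader to \cite{WX1}), so a direct line-by-line comparison is not possible; but the approach you take---substitute $h=rg$, prove the first inequality via $h(r)=\int_0^r h'(s)\,ds$ and Cauchy--Schwarz with the Fubini step producing the weight $(1-s)$, then obtain the second chain by the factorization $\mathcal{L}g=\bigl(h'/r\bigr)'$, one integration by parts, and a Cauchy--Schwarz followed by absorption---is the standard and expected one, and all the steps you sketch go through. Two small remarks: (i) since the lemma is applied to the complex-valued Fourier modes $\hat\psi$, one should write $|h'|^2$ and $h'\overline{h'}/r$ in the intermediate identities, but that changes nothing in the estimate; (ii) in the boundary evaluation $[g\,h']_0^1$ you invoke $g(0)=g(1)=0$, which is clean, and it is worth noting for the record that $h'=g+rg'$ is $C^1$ on $[0,1]$ so the product is finite at both ends---your alternative remark about the first inequality being vacuous when $g(0)\neq 0$ is accurate but unnecessary, because $h(0)=rg|_{r=0}=0$ holds unconditionally for $g\in C^2([0,1])$. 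The chaining at the end, $A\leq D\leq B^{1/2}A^{1/2}$ with $A=\int|g|^2 r$, $D=\int|h'|^2/r$, $B=\int|\mathcal{L}g|^2 r$, giving first $A\leq B$ and then $B^{1/2}A^{1/2}\leq B$, is exactly what the displayed double inequality asserts.
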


The following lemma is a variant of Hardy-Littlewood-P\'olya type inequality.
\begin{lemma}\label{lemmaHLP}
Let $g\in C^1([0,1])$ satisfy $g(0)=0$, one has
\begin{equation}\label{ineqHLP}
\int_0^1|g(r)|^2 dr \leq \frac{1}{2} \int_0^1 |g^{\prime}(r)|^2 (1-r^2) \, dr,
\end{equation}
and
\be \label{HLP-2}
\int_0^1 |g|^2 r \, dr \leq C \int_0^1 \left| \frac{d(r g) }{dr}   \right|^2 \frac{1-r^2}{r} \, dr .
\ee
\end{lemma}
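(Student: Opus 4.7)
My plan is to prove both inequalities by the same two-step recipe: represent $g$ (or a suitably rescaled auxiliary function) as an integral of its derivative starting from the vanishing endpoint, apply Cauchy--Schwarz pointwise to convert this into a weighted bound, and then swap the order of integration via Fubini to recover an integral inequality with the desired weight on the right-hand side.

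For \eqref{ineqHLP}, since $g(0)=0$, I would write $g(r)=\int_0^r g'(s)\,ds$ and apply Cauchy--Schwarz to obtain the pointwise bound $|g(r)|^2\le r\int_0^r|g'(s)|^2\,ds$. Integrating in $r$ over $[0,1]$ and interchanging the order of integration gives
\begin{equation*}
\int_0^1 |g(r)|^2\,dr \le \int_0^1|g'(s)|^2\int_s^1 r\,dr\,ds = \tfrac{1}{2}\int_0^1|g'(s)|^2(1-s^2)\,ds,
\end{equation*}
which is precisely \eqref{ineqHLP}, with the constant $\tfrac{1}{2}$ coming automatically from $\int_s^1 r\,dr=(1-s^2)/2$.

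For \eqref{HLP-2}, the essential trick is the change of unknown $h(r)=rg(r)$, which satisfies $h(0)=0$ and $h'(r)=\tfrac{d(rg)}{dr}$; the left-hand side then becomes $\int_0^1\frac{|h|^2}{r}\,dr$. Rerunning Cauchy--Schwarz plus Fubini on $h$ as above yields
\begin{equation*}
\int_0^1\frac{|h(r)|^2}{r}\,dr \le \int_0^1|h'(s)|^2(1-s)\,ds,
\end{equation*}
and then a pointwise weight comparison $1-s\le (1-s^2)/s$ for $s\in(0,1]$ (equivalent to the trivial $s\le 1+s$) upgrades the right-hand side to $\int_0^1|h'(s)|^2\,\tfrac{1-s^2}{s}\,ds$, establishing \eqref{HLP-2} in fact with $C=1$.

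The only minor subtlety I anticipate is the singular weight $1/s$ in \eqref{HLP-2}, which seems threatening near the axis; however, the substitution $h=rg$ produces an auxiliary function that vanishes to order at least one at $r=0$, so the Hardy-type integration has exactly the right cancellation to close the estimate without any loss. I do not expect any serious obstacle beyond keeping track of weights, and in particular the constants in both inequalities are explicit and independent of $g$.
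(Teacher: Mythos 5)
Your proof is correct, and the paper itself does not prove Lemma \ref{lemmaHLP} but only cites \cite{WX1}, so there is no in-paper argument to compare against. Both of your steps are the standard Hardy recipe and they close cleanly: for \eqref{ineqHLP}, Cauchy--Schwarz on $g(r)=\int_0^r g'(s)\,ds$ plus Fubini gives the sharp constant $\tfrac12$; for \eqref{HLP-2}, the substitution $h=rg$ (so $h\in C^1$, $h(0)=0$, $h'=\tfrac{d(rg)}{dr}$) reduces the left side to $\int_0^1 |h|^2/r\,dr$, the same argument gives the bound $\int_0^1|h'(s)|^2(1-s)\,ds$, and the pointwise comparison $1-s\le (1-s^2)/s$ on $(0,1]$ yields \eqref{HLP-2} with $C=1$. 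The only detail worth making explicit is that the pointwise bound $|h(r)|^2/r\le\int_0^r|h'|^2\,ds$ holds for $r>0$ and that $|h(r)|^2/r\to 0$ as $r\to 0^+$ (since $h\in C^1$ with $h(0)=0$), so the integral on the left of \eqref{HLP-2} is well-defined and the Fubini step is legitimate; if the right-hand side is infinite the inequality is vacuous.
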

The following lemma is about two weighted interpolation inequalities, which are quite similar to \cite[(3.28)]{M}.
\begin{lemma}\label{weightinequality} Let $g \in C^2[0, 1]$, then one has
\begin{equation} \label{weight1} \ba
\int_0^1 |g|^2r \, dr  \leq & C \left(\int_0^1 (1-r^2)|g|^2 r \, dr\right)^{\frac23} \left(\int_0^1  \left|\frac{d}{dr}(rg)\right|^2 \frac{1}{r} \, dr\right)^{\frac13} \\
&\ \ \ \  + C \int_0^1 (1-r^2)|g|^2 r\, dr
\ea
\end{equation}
and
\be \label{weight2} \ba
\int_0^1 \left| \frac{d}{dr}(rg) \right|^2 \frac1r \, dr &  \leq C \left( \int_0^1 \frac{1-r^2}{r} \left| \frac{d}{dr} ( rg)  \right|^2 \, dr \right)^{\frac23} \left( \int_0^1 |\mathcal{L} g|^2 r \, dr  \right)^{\frac13} \\
&\ \ \ \ \ \ \ \ \ \ \ + C \int_0^1 \frac{1-r^2}{r} \left| \frac{d}{dr} ( rg)  \right|^2 \, dr .
\ea
\ee
\end{lemma}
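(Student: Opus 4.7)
The plan is to prove both inequalities by the same two-scale splitting, introducing a parameter $\delta \in (0, 1/2]$, decomposing $[0,1]$ into an interior region $[0, 1-\delta]$ where the weight $(1-r^2)$ is bounded below by $\delta$ and a boundary region $[1-\delta, 1]$ on which more delicate one-dimensional estimates are required. In both cases the goal is to produce a bound of the shape $E \leq C(A/\delta + \delta^2 B)$, where $A$ is the weighted integral and $B$ the higher-derivative integral appearing on the right, and then to optimize in $\delta$, which yields the exponents $2/3, 1/3$ predicted by scaling.

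For \eqref{weight1}, set $h = rg$ (so that $h(0) = 0$ automatically) and write $E = \int_0^1 |h|^2/r \, dr$, $A = \int_0^1 (1-r^2)|h|^2/r \, dr$, $B = \int_0^1 |h'|^2/r \, dr$. On the interior region, $1 - r^2 \geq \delta$ gives $\int_0^{1-\delta}|h|^2/r \, dr \leq A/\delta$ at once. On the boundary region I would apply the first mean value theorem for integrals to the nonnegative weight $(1-r^2)/r$, selecting $s_0 \in [1-\delta, 1]$ with $|h(s_0)|^2 \int_{1-\delta}^1 (1-r^2)/r \, dr = \int_{1-\delta}^1 (1-r^2)|h|^2/r \, dr \leq A$, and combine this with the elementary lower bound $\int_{1-\delta}^1 (1-r^2)/r \, dr \geq \delta^2/2$ to conclude $|h(s_0)|^2 \leq 2A/\delta^2$. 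The fundamental theorem of calculus and Cauchy-Schwarz then yield $|h(r) - h(s_0)|^2 \leq \delta \int_{1-\delta}^1 |h'|^2 \, ds \leq \delta B$ (the last inequality since $1 \leq 1/s$ on $[1-\delta, 1]$). Integrating $|h|^2/r \leq 2|h(s_0)|^2/r + 2|h(r)-h(s_0)|^2/r$ produces $\int_{1-\delta}^1 |h|^2/r \, dr \lesssim A/\delta + \delta^2 B$, and choosing $\delta = \min\{1/2, (A/B)^{1/3}\}$ closes the proof.

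For \eqref{weight2} the same skeleton applies to $k := (rg)'$ in place of $h$, with the crucial algebraic identity $\mathcal{L} g = (k/r)'$ taking over the role of $h' = (rg)'$. Setting $m := k/r$, so $m' = \mathcal{L} g$, one rewrites $\int |k|^2/r \, dr = \int r|m|^2 \, dr$, and the interior estimate on $[0, 1-\delta]$ is again immediate. The mean value theorem applied to the nonnegative weight $(1-r^2)r$ on $[1-\delta, 1]$, together with the lower bound $\int_{1-\delta}^1 (1-r^2) r \, dr \geq \delta^2/4$, selects $s_0$ with $|m(s_0)|^2 \lesssim A_1/\delta^2$, where $A_1$ denotes the first factor on the right of \eqref{weight2}; meanwhile $|m(r) - m(s_0)|^2 \leq \delta \int_{1-\delta}^1 |\mathcal{L} g|^2 \, ds \lesssim \delta B_1$ where $B_1 = \int_0^1 |\mathcal{L} g|^2 r\, dr$. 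Integrating $r|m|^2$ over $[1-\delta,1]$ and optimizing in $\delta$ exactly as in the first case yields \eqref{weight2}.

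The principal technical point, though routine once spotted, is obtaining the sharp exponents $2/3$ and $1/3$: a direct Cauchy-Schwarz after the identity $\int r|h|^2 \, dr = \int (1-r^2) h h' \, dr$ (itself obtained via integration by parts using $h(0) = 0$ and $(1-r^2)|_{r=1} = 0$) only produces the weaker $1/2, 1/2$ split, which would be insufficient for the $\Phi$-dependent estimates exploited in Sections \ref{sec-res} and \ref{sec-swirl}. The mean value theorem combined with the $\delta^2$ lower bound on the averaged weight over $[1-\delta,1]$ is precisely the device that upgrades the Cauchy-Schwarz $1/2, 1/2$ to the desired $2/3, 1/3$.
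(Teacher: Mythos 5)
The paper omits its own proof of this lemma, deferring to \cite{WX1}, so a direct comparison is not possible. Your two-scale splitting $[0,1-\delta]\cup[1-\delta,1]$, with the mean-value-theorem choice of $s_0$ giving $|h(s_0)|^2\lesssim A/\delta^2$ (via the lower bound $\int_{1-\delta}^1(1-r^2)/r\,dr\ge\delta^2/2$) and the fundamental theorem of calculus controlling $|h(r)-h(s_0)|^2\lesssim\delta B$, correctly yields $E\lesssim A/\delta+\delta^2 B$ and hence the $2/3$--$1/3$ exponents after optimizing in $\delta$; the additive term $C\int_0^1(1-r^2)|g|^2r\,dr$ (and its analogue in \eqref{weight2}) is exactly what absorbs the regime $A\gtrsim B$ where the optimizer $(A/B)^{1/3}$ would exceed $1/2$, and your use of $\mathcal{L}g=(k/r)'$ with $k=(rg)'$ makes the second inequality an exact replay of the first, so the argument is complete and correct.
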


\begin{lemma}\label{sobolev}
Assume that $\alpha > 0$. There exists a constant $C$, such that for every $g\in H^1(B_1^4(0))$, one has
\be \nonumber
\int_{B_1^4(0)} |g|^2 \, dx \leq C \left[ \int_{B_1^4(0) } |\nabla g|^2 \, dx + \alpha \int_{\partial B_1^4(0)} |g|^2 \, dS   \right].
\ee
\end{lemma}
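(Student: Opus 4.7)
The plan is to prove the inequality by a standard compactness/contradiction argument, exploiting the fact that the boundary term ``sees'' the mean value of $g$ in the simply connected domain $B_1^4(0)$.

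First, I would argue by contradiction: if the inequality fails for every constant $C$, then for each $n \in \mathbb{N}$ there exists $g_n \in H^1(B_1^4(0))$ with
\[
\int_{B_1^4(0)} |g_n|^2 \, dx = 1, \qquad \int_{B_1^4(0)} |\nabla g_n|^2 \, dx + \alpha \int_{\partial B_1^4(0)} |g_n|^2 \, dS \leq \frac{1}{n}.
\]
Thus $\{g_n\}$ is uniformly bounded in $H^1(B_1^4(0))$, and after passing to a subsequence (still denoted $g_n$), there exists $g \in H^1(B_1^4(0))$ such that $g_n \rightharpoonup g$ weakly in $H^1(B_1^4(0))$.

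Next, I would apply the Rellich–Kondrachov compact embedding $H^1(B_1^4(0)) \hookrightarrow L^2(B_1^4(0))$ to upgrade the convergence: $g_n \to g$ strongly in $L^2(B_1^4(0))$, which forces $\|g\|_{L^2(B_1^4(0))}^2 = 1$, so $g \not\equiv 0$. At the same time, lower semicontinuity of the $L^2$ norm under weak convergence yields $\int_{B_1^4(0)} |\nabla g|^2 \, dx \leq \liminf_n \int |\nabla g_n|^2 \, dx = 0$, so $\nabla g = 0$ a.e.\ in $B_1^4(0)$, and since $B_1^4(0)$ is connected, $g$ must be a nonzero constant.

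The key remaining step is to pass to the limit in the boundary term, which relies on the compactness of the trace operator $T: H^1(B_1^4(0)) \to L^2(\partial B_1^4(0))$; this follows from the continuous trace $H^1 \to H^{1/2}(\partial B_1^4(0))$ composed with the compact embedding $H^{1/2}(\partial B_1^4(0)) \hookrightarrow L^2(\partial B_1^4(0))$ on the smooth compact boundary. Hence $g_n|_{\partial B_1^4(0)} \to g|_{\partial B_1^4(0)}$ strongly in $L^2(\partial B_1^4(0))$. Since $\alpha > 0$ and $\int_{\partial B_1^4(0)} |g_n|^2 \, dS \leq 1/(n\alpha) \to 0$, we conclude $g|_{\partial B_1^4(0)} \equiv 0$. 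A nonzero constant whose trace is $0$ is impossible, which is the desired contradiction. The only subtlety in the argument, and the place where the assumption $\alpha > 0$ enters essentially, is the compact trace embedding; without a positive boundary contribution one could not rule out the constant mode, which is precisely why the condition $\alpha > 0$ is recorded explicitly in Remark on the nonzero swirl case.
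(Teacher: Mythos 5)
Your compactness argument is correct and is essentially the standard proof of such Friedrichs-type inequalities; the paper itself omits the proof of this lemma (the appendix defers to an earlier reference), so there is no authored argument to compare against, but your route is the canonical one. Two small remarks. First, your closing sentence locates the use of $\alpha>0$ in the compact trace embedding, but that compactness ($H^1(B_1^4(0))\to L^2(\partial B_1^4(0))$, via $H^{1/2}$ on the compact smooth boundary) is unconditional and has nothing to do with $\alpha$; the hypothesis $\alpha>0$ is used only to pass from $\alpha\int_{\partial B_1^4(0)}|g_n|^2\,dS\le 1/n$ to $\int_{\partial B_1^4(0)}|g_n|^2\,dS\to 0$, which is the step that kills the constant mode. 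Second, the constant $C$ produced by a compactness argument necessarily depends on $\alpha$ (as it must: for nonzero constants the inequality degenerates as $\alpha\to 0$), which is consistent with the lemma's phrasing and with the $(1+\alpha)/\alpha$ factors that appear downstream in Proposition \ref{swirl}; if one prefers an explicit constant $C\sim\max(1,1/\alpha)$, one can argue constructively by taking the vector field $x$ in the divergence theorem to get $\int_{\partial B_1^4(0)}g\,dS=\int_{B_1^4(0)}\big(4g+x\cdot\nabla g\big)\,dx$, which bounds the mean of $g$ by $\|g\|_{L^2(\partial B_1^4(0))}+\|\nabla g\|_{L^2(B_1^4(0))}$, and then apply the Poincar\'e--Wirtinger inequality to $g$ minus its mean.
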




The following fixed point theorem is the basic tool to prove the nonlinear structural stability.
\bl \la{nonlinear} Let $Y$ be a Banach space with the norm $\|\cdot\|_{Y}$, and let $\mathcal{B}:\  Y\times Y\rightarrow Y$ be a bilinear map. There exists a constant $\eta>0$ such that  for all $\zeta_1, \zeta_2 \in Y$, one has
\be \nonumber
\|\mathcal{B}(\zeta_1, \zeta_2) \|_{Y} \leq \eta \|\zeta_1\|_{Y}  \|\zeta_2\|_{Y}.
\ee
For each $\zeta^* \in Y$ satisfying $4 \eta \| \zeta^* \|_{Y} < 1$, the equation
\be \nonumber
\zeta = \zeta^*  + \mathcal{B}(\zeta, \zeta)
\ee
has a unique solution $\zeta \in Y$ satisfying
\be \nonumber
\|\zeta\|_{Y} \leq 2\|\zeta^*\|_{Y}.
\ee
\el


{\bf Acknowledgement.}
This work is financially supported by the National Key R\&D Program of China, Project Number 2020YFA0712000. The research of Wang was partially supported by NSFC grants 12171349 and 11671289. The research of Xie was partially supported by NSFC grants 11971307 and 11631008,  Natural Science Foundation of Shanghai 21ZR1433300. The authors would like to thank Professors Congming Li and Zhouping Xin for helpful discussions.

\end{document}